\theoremstyle{plain}
\newtheorem{theorem}{Theorem}[section]
\newtheorem{corollary}[theorem]{Corollary}
\newtheorem{proposition}[theorem]{Proposition}
\newtheorem{lemma}[theorem]{Lemma}
\theoremstyle{remark}
\newtheorem{remark}[theorem]{Remark}
\theoremstyle{definition}
\newtheorem{definition}[theorem]{Definition}
\newtheorem{construction}[theorem]{Construction}
\newtheorem{example}[theorem]{Example}
\newtheorem{proposition-definition}[theorem]{Proposition-Definition}
\newtheorem{notation}[theorem]{Notation}
\newcommand{\C}{\mathbb{C}}
\newcommand{\G}{\mathbb{G}}
\newcommand{\pp}{\mathbb{P}}
\newcommand{\Q}{\mathbb{Q}}
\newcommand{\vv}{\mathrm{virt}}
\title{Stable maps and stable quotients}
\author{Cristina Manolache}
\date{}
\begin{document}
\maketitle
\begin{abstract}
We analyze the relationship between two compactifications of the moduli space of maps from curves to a Grassmannian: the Kontsevich moduli space of stable maps and the Marian--Oprea--Pandharipande moduli space of stable quotients. We construct a moduli space which dominates both the moduli space of stable maps to a Grassmannian and the moduli space of stable quotients, and equip our moduli space  with a virtual fundamental class. We relate the virtual fundamental classes of all three moduli spaces using the virtual push-forward formula. This gives a new proof of a theorem of Marian--Oprea--Pandharipande: that enumerative invariants defined as intersection numbers in the stable quotient  moduli space coincide with Gromov--Witten invariants.
\end{abstract}
\tableofcontents
\section{Introduction}The  Kontsevich moduli space of stable maps to Grassmannians and the moduli space of stable quotients of Marian--Oprea--Pandharipande are two compactifications of spaces of curves on Grassmannians. These moduli spaces come equipped with virtual classes in the sense of \cite{bf}, \cite{lg}. The purpose of this paper is to understand the relation between the two virtual fundamental classes and thus provide a new proof of a theorem in \cite{mop}: that enumerative invariants defined as virtual intersection numbers in the two moduli spaces coincide. We do this by constructing a new moduli space of \emph{map-quotients} which dominates both the moduli space of stable maps and the moduli space of stable quotients. We endow this space with a virtual class and determine its relation to the virtual classes of the moduli of stable maps and stable quotients using the virtual push-forward theorem \cite{eu2}.
\\

In the following we briefly review the main definitions and we outline the main constructions. 
\subsection{Stable maps and stable quotients} 
\paragraph{Stable maps to Grassmannians.} Let $\G(k,r)$ be the Grassmannian of $k$-planes in the $r$-dimensional affine space. Let $(C, p_1,...,p_n)$ be a nodal  curve of genus $g$ with $m$ distinct markings which are different from the nodes. By the universal property of Grassmannians giving a degree $d$ map from $C$ to $\G(k,r)$ is equivalent to giving an exact sequence
\begin{equation*}
0\to S\to \mathcal{O}^{\oplus r}\to Q\to 0
\end{equation*}
where $S$ is a rank $k$ vector bundle of degree $d$ and $Q$ is a vector bundle. A map is called stable if its degree is positive on each unstable contracted component. It has been shown in \cite{toda} that this is equivalent to $$\omega_{\hat{C}}(p_1+...+p_n)\otimes (\wedge^k\hat{S}^{\vee})^{\epsilon}$$ being ample on $C$ for $\epsilon>2$. The moduli space of degree $d$ stable genus $g$ maps with $n$ marked points to $\G(k,r)$ wil be denoted by $\overline{M}_{g,n}(\G(k,r),d)$. 
\paragraph{Stable quotients.} Let $(\hat{C},p_1,...,p_n)$ be a nodal curve of genus $g$ with $n$ distinct markings which are different from the nodes. A quotient on $\hat{C}$ $$0\to \hat{S}\to\mathcal{O}_{\hat{C}}^{\oplus r}\stackrel{q}{\rightarrow} \hat{Q}\to 0$$ is called \textit{quasi-stable} if the torsion sheaf $\tau(\hat{Q})$ is not supported on nodes or markings. Let $k$ be the rank of $S$. A quotient $(\hat{C},p_1,...,p_n,q)$ is called stable if $$\omega_{\hat{C}}(p_1+...+p_n)\otimes (\wedge^k\hat{S}^{\vee})^{\epsilon}$$ is ample on $\hat{C}$ for every strictly positive $\epsilon \in\Q$. The moduli space of degree $d$ stable genus $g$ quotients with $n$ marked points will be denoted by $\overline{Q}_{g,n}(\G(k,r),d)$. 

The space $\overline{Q}_{g,n}(\G(k,r),d)$ is another compactification of the space of genus $g$ curves with $n$ marks in the Grassmannian $\G(k,r)$. 

\paragraph{Morphisms between moduli spaces of stable maps and moduli spaces of stable quotients.} Marian--Oprea--Pandharipande showed in \cite{mop} that if $k=1$ then there exists a morphism of stacks $$c:\overline{M}_{g,n}(\G(k,r),d)\to \overline{Q}_{g,n}(\G(k,r),d).$$ On points $c$ is obtained in the following way. Let $C^0_i$ be the rational tails without marked points of a stable maps $(C,p_1,...,p_n,f:C\to\G(k,r))$.
Let us suppose that the degree of $f$ restricted to $C^0_i$ is $d_i$. Let $\hat{C}$ be the closure of $C\backslash C^0_i$ in $C$ and let $p:C\to\hat{C}$ be the morphism contracting $C^0_i$. Let $\hat{S}=p_*S(-d_iC^0_i)$. Then to the stable map $0\to S\to \mathcal{O}^n\to Q\to 0$ $c$ associates a stable quotient
\begin{equation*}
0\to \hat{S}\to\mathcal{O}_{\hat{C}}^{\oplus r}\stackrel{q}{\rightarrow} \hat{Q}\to 0.
\end{equation*}
We show that for $k>1$ there is no such morphism: see Example \ref{popa}.

\subsection{Stable map-quotients} Below we define a proper DM-stack $\overline{MQ}_{g,n}(\G(k,r),d)$ with the following properties
\begin{enumerate}
\item $\overline{MQ}_{g,n}(\G(k,r),d)$ admits natural morphisms 
\begin{equation*}
\xymatrix{&\overline{MQ}_{g,n}(\G(k,r),d)\ar[ld]_{c_1}\ar[rd]^{c_2}\\
\overline{M}_{g,n}(\G(k,r),d)&&\overline{Q}_{g,n}(\G(k,r),d)}
\end{equation*}
\item $\overline{MQ}_{g,n}(\G(k,r),d)$ admits a dual relative obstruction theory $E^{\bullet}_{\overline{MQ}}$ (relative to some pure dimensional stack) which comes equipped with morphisms
\begin{align}\label{inspir}E^{\bullet}_{\overline{MQ}}\to c_1^* E^{\bullet}_{\overline{M}}\\E^{\bullet}_{\overline{MQ}}\to c_2^* E^{\bullet}_{\overline{Q}}.
\end{align}
\end{enumerate}
Having constructed such a stack allows us to relate virtual classes by means of the virtual push-forward property \cite{eu2}. In the following we outline the structure of this paper. 

In Section \ref{mbundles} we review moduli spaces $\mathfrak{Bun}_{g,n}(k,d)$ of rank $k$, degree $d$ vector bundles on genus $g$ nodal curves with $n$ marked points and introduce an auxiliary moduli space $\widetilde{\mathfrak{Bun}}_{g,n}(k,d)$ which comes equipped with a birational morphism $\pi_1:\widetilde{\mathfrak{Bun}}_{g,n}(k,d)\to \mathfrak{Bun}_{g,n}(k,d)$. In particular $\widetilde{\mathfrak{Bun}}_{g,n}(k,d)$ has pure dimension. 

In Section \ref{map-quot} we construct a proper Deligne-Mumford stack $\overline{MQ}_{g,n}(\G(k,r),d)$ which is a substack of $\overline{M}_{g,n}(\G(k,r),d)\times_{\mathfrak{M}^{rtf}_{g,n}}\overline{Q}_{g,n}(\G(k,r),d)$. This stack fits into a commutative diagram
\begin{equation*}
\xymatrix{
&{\overline{MQ}_{g,n}(\G(k,r),d)}\ar[dd]\ar[ld]_{c_1}\ar[rd]^{c_2}\\
\overline{M}_{g,n}(\G(k,r),d)\ar[dd]&&\overline{Q}_{g,n}(\G(k,r),d)\ar[d]\\
&\widetilde{\mathfrak{Bun}}_{g,n}(k,d)\ar[ld]_{\pi_1}\ar[r]^{\pi_2}&\mathfrak{Bun}_{g,n}(k,d)\\
\mathfrak{Bun}_{g,n}(k,d).}
\end{equation*}
Moreover, the rectangle on the left is cartesian and thus it gives rise to a perfect obstruction theory of $\overline{MQ}_{g,n}(\G(k,r),d)$ relative to $\widetilde{\mathfrak{Bun}}_{g,n}(k,d)$. This gives rise to a virtual class on $\overline{MQ}_{g,n}(\G(k,r),d)$. 

In Section \ref{compvf} we use the virtual push-forward theorem to show that $c_1$ and $c_2$ satisfy the virtual push forward property: see Theorem \ref{final}, which gives a new proof of the Marian--Oprea--Pandharipande theorem mentioned above.

One of the main technical difficulties is to make the constructions of $\widetilde{\mathfrak{Bun}}_{g,n}(k,d)$ and $\overline{MQ}_{g,n}(\G(k,r),d)$ functorial. We will do slightly less, namely we will construct functorial spaces $\mathfrak{P}$ and $\bar{P}$ which contain  $\widetilde{\mathfrak{Bun}}_{g,n}(k,d)$ and $\overline{MQ}_{g,n}(\G(k,r),d)$ respectively. 
\paragraph{Relation to other works} In the past years many birational models of moduli spaces of stable maps have been constructed for particular targets. These include: moduli spaces of weighted stable maps introduced by Bayer and Manin \cite{am}, the moduli spaces defined by Musta\c{t}\u{a}--Musta\c{t}\u{a} \cite{mm}, moduli spaces of stable quotients of Marian, Oprea and Pandharipande \cite{mop} with a more general version introduced by Toda \cite{toda}, moduli of stable toric quasi-maps \cite{cf1} and finally, moduli spaces of stable quasi-maps to GIT quotients \cite{cf2} introduced by Ciocan-Fontanine, Kim and Maulik which generalize \cite{mop}, \cite{toda} and \cite{cf1}. These spaces are particularly interesting because they represent some natural functors and because they lead to invariants which are closely related to Gromov-Witten invariants. The stable quasi-map invariants (or variants of them) are also easier to compute in some cases (see \cite{gi}).
\\It is therefore interesting to compare quasi-map invariants to Gromov-Witten invariants. This has already been done for stable quotients in \cite{mop} and \cite{toda} by localization. Our approach is completely different and the main hope is that it will shed light on similar questions. More precisely, we first construct (a rather unnatural) auxiliary moduli space with a virtual class and then relate this virtual class to the virtual classes of the original spaces. We emphasize that this can be done with very little information on the auxiliary moduli space (e.g we do not know whether it has an absolute perfect obstruction theory).
\\As regarding the birational geometry of moduli spaces of stable maps to GIT quotients little is known: divisors on moduli spaces of maps stable maps were studied mostly in genus zero and for homogeneous spaces (see e.g. \cite{chs1}, \cite{chs2}, \cite{chs3}). The special feature in our case is that there is a morphism from an open dense set of the moduli space of maps to Grassmannians to the moduli space of stable quotients. This morphism does not extend in general. The proofs of these facts are essentially the same as the ones in \cite{mihnea}.
\\As a final remark, one could hope that the techniques in this paper easily extend to the case of quasi-maps to GIT quotients. This is unfortunately not true as in general there is no map from an open dense set of the moduli space of stable maps to the moduli space of qusi-maps. This fact introduces extra challenges at level of virtual classes comparisons and it will be treated elsewhere. 
\paragraph{Acknowledgements} I would like to thank G Farkas, A Ortega, T Coates, A Corti, E Macr\`i for useful discussions. I am particularly grateful to I Ciocan-Fontanine for pointing out several delicate issues and for very inspiring discussions. Many thanks to M Popa for explaining to me some aspects of his paper \cite{mihnea} and to B Fantechi to whom I owe most of Sections \ref{barbara} and \ref{barbara2}.
\\I was supported by SFB-$647$ and by a Marie Curie Intra-European Fellowship: FP7-PEOPLE-2011-IEF.
\section{Moduli of Bundles}\label{mbundles}
\subsection{Moduli of bundles over nodal curves}\label{barbara}
We review a few results concerning the existence and properties of stacks of vector bundles over prestable curves.
\paragraph{Moduli of prestable curves.} Let us first fix notations. Let $g\geq 0$ and $n\geq 0$ be integers. We denote by $\mathfrak{M}_{g,n}$ the Artin stack of prestable curves of genus $g$ with $n$ marked points. As in the case of stable curves, $\mathfrak{M}_{g,n+1}$ is the universal curve.
\begin{definition} Let $C$ be a nodal curve.  A connected rational component (not necessarily irreducible) $C^0$ with no marked points such that $C^0$ intersects the rest of the curve in exactly one point is called a rational tail.
\end{definition}
\begin{definition}In the following we denote by $\mathfrak{M}_{g,n}^{rt}$ the divisor of $\mathfrak{M}_{g,n}$ whose points are curves which have rational tails and by $\mathfrak{M}_{g,n}^{rtf}$ be the open substack of $\mathfrak{M}_{g,n}$, whose points are rational tail free curves.
\end{definition}
\begin{proposition}\label{contr} Let $\mathfrak{S}\subset \mathfrak{M}_{g,n}$ be a substack of finite type of $\mathfrak{M}_{g,n}$. Then, there exists a morphism of stacks $p:\mathfrak{S}\to\mathfrak{M}_{g,n}^{rtf}$ which contracts rational tails.
\end{proposition}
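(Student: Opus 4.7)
The strategy is to construct $p$ by assigning, to each family $\mathcal{C}/S \in \mathfrak{S}(S)$, a canonical contracted family $\hat{\mathcal{C}}/S \in \mathfrak{M}_{g,n}^{rtf}(S)$. Since rational tails are an intrinsic feature of each geometric fiber, the construction will be canonical, so by \'etale descent it suffices to work with affine schemes $S$ of finite type and verify functoriality.

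First I would identify the rational tails in the family. Let $\pi\colon\mathcal{C}\to S$ be the universal curve with sections $\sigma_1,\dots,\sigma_n$, and let $\mathcal{E}\subset\mathcal{C}$ be the locus of points lying on a rational tail in some fiber. Since rational tails live only at existing nodes and their presence can be read off the dual graph, $\mathcal{E}$ is closed in $\mathcal{C}$. The finite-type hypothesis on $\mathfrak{S}$ ensures that only finitely many dual graph combinatorial types occur, so $S$ admits a finite locally closed stratification $S=\bigsqcup_\alpha S_\alpha$ on which the combinatorial type is constant; on each stratum $\mathcal{E}$ is a flat relative subcurve.

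The technical heart is the contraction of $\mathcal{E}$ in families. On each stratum $S_\alpha$ I would construct a relatively semi-ample line bundle $L$ on $\mathcal{C}|_{S_\alpha}$ that is strictly positive on every component of every fiber not lying in a rational tail and trivial on each connected rational tail. A concrete candidate is a sufficiently large tensor power of $\omega_{\mathcal{C}/S}(\sum_i\sigma_i)$ twisted by an effective Cartier divisor supported on $\mathcal{E}$, chosen so as to correct the degree of the restriction to every $\mathbb{P}^1$-subcomponent of each rational tail back to zero. Setting $\hat{\mathcal{C}}|_{S_\alpha}:=\mathrm{Proj}_{S_\alpha}\bigoplus_{k\geq 0}\pi_*L^k$ then yields a flat family of prestable curves and an $S_\alpha$-morphism $\phi\colon\mathcal{C}|_{S_\alpha}\to\hat{\mathcal{C}}|_{S_\alpha}$ which is an isomorphism outside $\mathcal{E}$ and contracts each connected component of $\mathcal{E}$ to a single point; because each connected rational tail is a tree of $\mathbb{P}^1$'s (so $H^1(\mathcal{O})=0$), one obtains $\phi_*\mathcal{O}_{\mathcal{C}}=\mathcal{O}_{\hat{\mathcal{C}}}$, and the target is prestable with a smooth point replacing each attaching node. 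Since the $\sigma_i$ are disjoint from $\mathcal{E}$, they descend to sections $\hat{\sigma}_i$ of $\hat{\mathcal{C}}$.

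Finally I would glue and descend. The pair $(\hat{\mathcal{C}},\phi)$ is characterized by the universal property that $\phi$ is an isomorphism off $\mathcal{E}$, contracts each connected component of $\mathcal{E}$ to a point, and satisfies $\phi_*\mathcal{O}_{\mathcal{C}}=\mathcal{O}_{\hat{\mathcal{C}}}$; this characterization forces uniqueness up to unique isomorphism, so the families constructed stratum by stratum glue to a single family $\hat{\mathcal{C}}/S$, and the assignment $S\mapsto(\hat{\mathcal{C}}/S)$ is functorial in $S$. Descent along an \'etale cover of $\mathfrak{S}$ then yields the desired morphism $p\colon\mathfrak{S}\to\mathfrak{M}_{g,n}^{rtf}$.

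The main obstacle I anticipate is producing the twisting divisor on $\mathcal{E}$ coherently across strata where the combinatorial structure of rational tails jumps. My fallback is either to exploit the uniqueness statement above to glue locally chosen twists, or to bypass the line-bundle construction altogether and realize $\hat{\mathcal{C}}$ as the quotient of $\mathcal{C}$ by the equivalence relation collapsing each connected rational tail to a point, verifying algebraicity via standard contraction results for flat proper families whose collapsed loci have vanishing $H^1$ of the structure sheaf.
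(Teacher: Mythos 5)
Your proposal takes a genuinely different route from the paper, but it has a gap that I don't think your fallbacks resolve.

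The paper does not construct $\hat{\mathcal{C}}$ directly over an arbitrary base. Instead it defines a stack $\mathfrak{S}'$ of \emph{pairs} $(C,\hat{C})$ together with a contraction $p:C\to\hat{C}$, and shows that the forgetful projection $p_1:\mathfrak{S}'\to\mathfrak{S}$ is an isomorphism by proving it is separated (via the valuative criterion: over a DVR with $\mathcal{C}$ smooth, rational tails are chains of negative curves, so a contraction exists and is unique) and bijective on points. The contracted family then comes for free from the second projection $p_2:\mathfrak{S}'\to\mathfrak{S}^{rtf}$. This sidesteps the need to exhibit a single relatively ample-off-tails line bundle over an arbitrary base.

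The gap in your approach is exactly where you flag it: the stratification. Stratifying $S$ into locally closed $S_\alpha$ on which the combinatorial type is constant lets you build $\hat{\mathcal{C}}|_{S_\alpha}$ on each piece, but a stratification is not an open cover, and families defined stratum-by-stratum do not glue to a flat family over $S$. Your first fallback (uniqueness up to unique isomorphism) is a descent statement for an open or fppf cover; it does not let you assemble a flat family across the closure of a stratum, where the rational-tail locus $\mathcal{E}$ is not a Cartier divisor in $\mathcal{C}$ and the twisting divisor you need is not even globally defined. Indeed, the twist $\omega_{\mathcal{C}/S}(\sum\sigma_i)$ has degree $-1$ on the last component of every rational tail and there is no canonical effective divisor on $\mathcal{E}$, Cartier in $\mathcal{C}$ over all of $S$, that corrects this. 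In Lemma \ref{contraction} the paper does use a Proj construction of the kind you envisage, but there an actual vector bundle $\mathcal{S}$ is given and $\det\mathcal{S}^\vee$ supplies the global twist; that data is absent here. Your second fallback (form the quotient of $\mathcal{C}$ by the equivalence relation collapsing rational tails and appeal to contraction theorems) is closer to what can be made to work, but then you still must verify flatness of the result over $S$ and identify $\hat{\mathcal{C}}$ as a prestable family; at that point you have essentially reproduced the existence half of the paper's separated-and-bijective argument, and the uniqueness half is still needed for descent. I would recommend abandoning the stratification and instead proving, as the paper does, that the contraction over a DVR with smooth total space exists and is unique, and then packaging the rest as a stack-theoretic argument.
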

\begin{proof} Let $\mathfrak{S}^{rtf}$ be the open substack of $\mathfrak{S}$, whose points are rational tail free curves. Let $\mathfrak{S}^{\prime}_{g,n}$ be the set of pairs $(C,\hat{C})\in\mathfrak{S}\times\mathfrak{S}^{rtf}$ such that there exists $p:C\to\hat{C}$ which contracts rational tails with no marked points and is the identity on the complement of such curves. In order to show that $\mathfrak{S}^{\prime}$ is a substack of $\mathfrak{S}\times\mathfrak{S}^{rtf}$ we show that we are under the hypothesis of Example 4.19 in \cite{fga}. It is clear that any cartesian diagram arrow whose target is in $\mathfrak{S}^{\prime}$ is also in $\mathfrak{S}^{\prime}$. Let now $B_i$ be a covering of $B$ and suppose that $\mathfrak{S}\times\mathfrak{S}^{rtf}(B_i)\in \mathfrak{S}_{g,n}(B_i)$. Then we need to show that  $\mathfrak{S}\times\mathfrak{S}^{rtf}(B_i)\in \mathfrak{S}^{\prime}(B)$. As $p_i$ is the identity away from rational tails we can glue along $p_i$ to get a global $p:C\to\hat{C}$.
\\The algebraic $\mathfrak{S}^{\prime}$ has a projection $p_2:\mathfrak{S}^{\prime}\to\mathfrak{S}^{rtf}$. We next prove that $$\mathfrak{S}^{\prime}\simeq\mathfrak{S}.$$ We claim that $p_1:\mathfrak{S}^{\prime}\to\mathfrak{S}$ is separated. As $p_1$ is also one to one we have that $p_1$ is an isomorphism. Let us sketch the proof of the claim. It is enough to show that given  a family of curves $\mathcal{C}\to\Delta$ and a projection $p:\mathcal{C}\to\hat{\mathcal{C}}$ over $\Delta^*$, $p$ extends uniquely over $\Delta$. It is clear that any component of $\mathcal{C}$ whose fibers are rational tails can be contracted so we may assume that $\mathcal{C}$ is smooth. As rational tails in $\mathcal{C}$ are chains of $\pp^1$'s with negative self intersection there exists a unique contraction map $p:\mathcal{C}\to\hat{\mathcal{C}}$.   
\end{proof}
\paragraph{Moduli of bundles over prestable curves.} Let $\mathfrak{Bun}_{g,n}(k,d)(B)$ be the category whose objects are pairs $(\mathcal{C},\mathcal{S})$, where 
 \begin{itemize}
 \item $\mathcal{C}\to B$ is a family of prestable curves of genus $g$  with $n$ sections
 \item $\mathcal{S}$ is a vector bundle of rank $k$ and degree $d$.
 \end{itemize}
Isomorphisms:  An isomorphism
\begin{equation*}
\phi:(\mathcal{C},\mathcal{S})\to(\mathcal{C}^{\prime},\mathcal{S}^{\prime})
\end{equation*}
is an automorphism of curves
\begin{equation*}
\phi:\mathcal{C}\to\mathcal{C}^{\prime}
\end{equation*}
together with isomorphisms $\theta: \mathcal{S}^{\vee}\to \phi^*{\mathcal{S}^{\prime}}^{\vee}$ such that $\phi(p_i)=p_i^{\prime}$ , $\forall i$.

By \cite{ml} we have that $Coh_{\mathfrak{M}_{g,n+1}/\mathfrak{M}_{g,n}}$ is an Artin stack. It can be easily seen that $\mathfrak{Bun}_{g,n}(k,d)$ is  a substack of $Coh_{\mathfrak{M}_{g,n+1}/\mathfrak{M}_{g,n}}$ (see \cite{cf2} for more details and generalizations). Let $\mathcal{S}$ denote the universal bundle on the universal curve on $\mathfrak{Bun}_{g,n}(k,d)$. 
We will also consider moduli spaces of vector bundles on curves with stability conditions as follows.
\begin{construction} Let $\mathfrak{Bun}^{\epsilon}_{g,n}(k,d)$ be the substack of $\mathfrak{Bun}_{g,n}(k,d)(B)$ such that the line bundle\begin{equation}
(\wedge^k\mathcal{S}^{\vee})^{\otimes \epsilon}\otimes \omega(\sum p_i)
\end{equation}
is ample. As ampleness is an open condition $\mathfrak{Bun}^{\epsilon}_{g,n}(k,d)$ is an open substack of $\mathfrak{Bun}_{g,n}(k,d)$.
\end{construction}

\begin{remark}\label{smooth} Let $$\phi:\mathfrak{Bun}_{g,n}(k,d)\to \mathfrak{M}_{g,n}$$ be the morphism which forgets the bundle. The morphism $\phi$ is smooth as the relative obstruction in a point $(C, S)$ is $$Ext_C^2(S, S)=0.$$ This shows that $\mathfrak{Bun}_{g,n}(k,d)$ is smooth of pure dimension $Ext^1(S, S)-Ext^0(S, S)+3g-3+n=k(g-1)-deg(S\otimes S^{\vee})+3g-3+n=k^2(g-1)+3g-3+n$.  
\end{remark}

\begin{construction}Consider the stack $\mathfrak{Bun}_{g,n}(k,d)^{rt}$ defined by the following cartesian diagram
\begin{equation*}
\xymatrix{\mathfrak{Bun}_{g,n}(k,d)^{rt}\ar[r]\ar[d]&\mathfrak{Bun}_{g,n}(k,d)\ar[d]^{\phi}\\
\mathfrak{M}_{g,n}^{rt}\ar[r]^i&\mathfrak{M}_{g,n}.}
\end{equation*}
Let $\mathfrak{Bun}_{g,n}(k,d)^{rtf}$ be the complement of $\mathfrak{Bun}_{g,n}(k,d)^{rt}$ in $\mathfrak{Bun}_{g,n}(k,d)$.
\end{construction}
\begin{remark} We have that $\mathfrak{M}_{g,n}^{rt}$ has codimension 1 in $\mathfrak{M}_{g,n}$. Remark \ref{smooth} implies that $\mathfrak{Bun}_{g,n}(k,d)^{rt}$ has codimension 1 in $\mathfrak{Bun}_{g,n}(k,d)$.
\end{remark}
\begin{lemma}\label{contraction} Let $\mathcal{C}$ be the universal curve over $\mathfrak{Bun}^{\epsilon}_{g,n}(k,d)$. Then there exists a rational tail free curve $\hat{\mathcal{C}}$ and a projection $p:\mathcal{C}\to\hat{\mathcal{C}}$ over $\mathfrak{Bun}^{\epsilon}_{g,n}(k,d)$.
\end{lemma}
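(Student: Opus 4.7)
The plan is to deduce the lemma from Proposition \ref{contr} applied via the forgetful morphism $\phi: \mathfrak{Bun}^{\epsilon}_{g,n}(k,d) \to \mathfrak{M}_{g,n}$.

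The first step is to check that $\phi$ factors through a finite-type substack $\mathfrak{S} \subset \mathfrak{M}_{g,n}$. By the definition of $\mathfrak{Bun}^{\epsilon}_{g,n}(k,d)$, the line bundle $L := (\wedge^k \mathcal{S}^{\vee})^{\otimes \epsilon} \otimes \omega(\sum p_i)$ is ample on every geometric fiber of the universal curve. Its total degree $-\epsilon d + 2g-2+n$ is fixed, and ampleness forces $\deg(L|_{C_i})$ to be a positive integer on each irreducible component $C_i$, so the number of irreducible components of any curve parametrized by $\mathfrak{Bun}^{\epsilon}_{g,n}(k,d)$ is bounded by $\deg L$. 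Combined with the fixed arithmetic genus $g$ and number of markings $n$, this bounds the topological type of the underlying curves; hence only finitely many strata of $\mathfrak{M}_{g,n}$ are met and the image of $\phi$ is contained in a finite-type substack $\mathfrak{S}$.

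The second step is to apply Proposition \ref{contr} to $\mathfrak{S}$, obtaining a morphism $\pi: \mathfrak{S} \to \mathfrak{M}_{g,n}^{rtf}$ which contracts rational tails. Unwinding the proof of that proposition, $\pi$ is constructed precisely by producing, for every family of curves in $\mathfrak{S}$, the canonical contraction of the chains of $\pp^1$'s of negative self-intersection forming the rational tails. In particular, at the level of universal curves one obtains a morphism from the universal curve over $\mathfrak{S}$ to the pullback along $\pi$ of the universal curve over $\mathfrak{M}_{g,n}^{rtf}$, defined over $\mathfrak{S}$ and contracting rational tails on every fiber.

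The final step is to pull this entire construction back along $\phi: \mathfrak{Bun}^{\epsilon}_{g,n}(k,d) \to \mathfrak{S}$: set $\hat{\mathcal{C}}$ to be the pullback of the universal curve of $\mathfrak{M}_{g,n}^{rtf}$ along $\pi \circ \phi$, and let $p: \mathcal{C} \to \hat{\mathcal{C}}$ be the pullback of the contraction morphism produced in the previous step. The main technical obstacle lies in the finite-type reduction of the first step (turning ampleness plus fixed total degree into a uniform bound on the topological types that occur); once that is in hand, the rest of the argument is a formal application of Proposition \ref{contr} and functoriality of pullback of universal curves.
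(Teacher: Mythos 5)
Your proposal is correct in outline but takes a genuinely different route from the paper. The paper's proof of Lemma~\ref{contraction} is direct and constructive: it exhibits a $\pi$-relative line bundle $\mathcal{L}=(\wedge^k\mathcal{S}^{\vee})^{\otimes \epsilon}\otimes \omega(\sum p_i)^{\epsilon a}$ that is trivial precisely on the rational tails and $\pi$-ample off them, and then defines $\hat{\mathcal{C}}=\mathrm{Proj}\oplus_l\mathcal{L}^{ml}$; flatness of $\hat{\mathcal{C}}$ over the base is then deduced from reducedness of $\mathfrak{Bun}^{\epsilon}_{g,n}(k,d)$. You instead reduce to Proposition~\ref{contr} by showing that the forgetful map $\phi$ lands in a finite-type substack $\mathfrak{S}\subset\mathfrak{M}_{g,n}$, and then pull back the contraction produced there; flatness of $\hat{\mathcal{C}}$ is automatic since it is the pullback of a universal curve. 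Your approach is more modular and reuses the earlier proposition, whereas the paper's gives an explicit $\mathrm{Proj}$ description of $\hat{\mathcal{C}}$ which is referenced later (e.g., Construction~\ref{funcp} condition (1) speaks of ``the morphism of Lemma \ref{contraction}'').

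Two small imprecisions in your step 1. First, since $\epsilon$ is a rational (not necessarily integer) parameter, $\deg(L|_{C_i})=\epsilon a_i+b_i$ with $a_i,b_i\in\Z$ need not be an integer; writing $\epsilon=p/q$, positivity only gives $\deg(L|_{C_i})\geq 1/q$, so the bound on the number of components is $q\cdot\deg L$ rather than $\deg L$. This does not affect the conclusion. Second, Proposition~\ref{contr} as stated produces a morphism of stacks $\mathfrak{S}\to\mathfrak{M}_{g,n}^{rtf}$ ``which contracts rational tails''; you are right that unwinding its proof yields a universal contraction $\mathcal{C}_{\mathfrak{S}}\to\hat{\mathcal{C}}_{\mathfrak{S}}$ over $\mathfrak{S}$, but this is implicit rather than part of the literal statement, so it is worth making the pullback step explicit as you do.
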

\begin{proof} Let $\pi:\mathcal{C}\to B, \mathcal{S}$ be the tautological bundle on the tautological curve of $\mathfrak{Bun}^{\epsilon}_{g,n}(k,d)$. Without loss of generality we may assume that the divisor consisting of curves with rational tails is irreducible, otherwise we repeat the construction for each component. Let $a$ be the degree of $\wedge^k\mathcal{S}^{\vee}$ restricted to the locus consisting of rational tails. As $(\wedge^k\mathcal{S}^{\vee})^{\otimes \epsilon}\otimes \omega(\sum p_i)$ is ample we have that $$\mathcal{L}=(\wedge^k\mathcal{S}^{\vee})^{\otimes \epsilon}\otimes \omega(\sum p_i)^{\epsilon a}$$ is trivial on the locus consisting of curves with one rational tail. As $\mathcal{C}$ is relatively normal it follows that $\mathcal{L}$ is trivial on all rational tails and $\pi$ relatively ample on the complement of this locus. This shows that $\mathcal{L}^m$ is base point free for a sufficiently large $m$. Let $$\hat{\mathcal{C}}=\mathrm{Proj}\oplus_l \mathcal{L}^{ml}.$$ As $\mathcal{L}^m$ is $\pi$-relatively base point free it determines a morphism $p:\mathcal{C}\to \hat{\mathcal{C}}$. We have that $\hat{\mathcal{C}}\to B$ is a family of genus $g$ curves and as $\mathfrak{Bun}^{\epsilon}_{g,n}(k,d)$ is reduced, we obtain that $\hat{\mathcal{C}}$ is flat over $\mathfrak{Bun}^{\epsilon}_{g,n}(k,d)$.
\end{proof}

\subsection{The most balanced locus}\label{barbara2}
\begin{construction}\label{abal}Let $\mathfrak{Bun}_{0,n}(k,d)^{bal}$ be the substack of $\mathfrak{Bun}_{0,n}(k,d)$ such that on every component of a rational curve $C$ the bundle $S$ is the most balanced one. More precisely, if $C_i$ is a rational component of $C$ such that $S_i:=S_{|C_i}$ has degree $d_i$, then $$S_i=\mathcal{O}(a_i)^{\oplus k_1}\oplus\mathcal{O}(a_i-1)^{\oplus k-k_1},$$ where $a_i$, $k_1$ are the unique integers such that  $k_1a_i+(k-k_1)(a_i-1)=d_i$. We call $(C_i,S_i)$ as above $a_i$-balanced.
\end{construction}
\begin{lemma}$\mathfrak{Bun}_{0,n}(k,d)^{bal}$ is an open substack of $\mathfrak{Bun}_{0,n}(k,d)$.
\end{lemma}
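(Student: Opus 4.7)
The plan is to use upper semi-continuity of cohomology. A rank $k$ vector bundle $E$ on $\mathbb{P}^1$ of degree $d$ is in the balanced splitting type if and only if $h^0(\mathbb{P}^1, E^\vee \otimes E(-2)) = 0$, because for $E = \bigoplus_j \mathcal{O}(b_j)$ one has
\[
h^0(\mathbb{P}^1, E^\vee \otimes E(-2)) = \sum_{i,j} \max(0, b_j - b_i - 1),
\]
which vanishes precisely when $|b_j - b_i| \leq 1$ for all $i,j$. In a flat family of rank $k$ bundles over a fixed $\mathbb{P}^1$, upper semi-continuity of $h^0$ then immediately gives openness of the balanced locus.

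I would then stratify $\mathfrak{Bun}_{0,n}(k,d)$ by the dual graph of the curve. On each stratum the components of the universal curve are globally defined as divisors, the restriction of $\mathcal{S}$ to each component is a flat family of rank $k$ bundles on $\mathbb{P}^1$, and the previous observation shows componentwise openness of the balanced locus. To glue these stratumwise statements into global openness I would verify the valuative criterion: given a DVR family $(\mathcal{C}/\Delta, \mathcal{S})$ whose special fiber $(C_0, S_0)$ is balanced on each component, show that $(C_\eta, S_\eta)$ is balanced on each of its components. Each component $C_{\eta,j}$ of $C_\eta$ corresponds to a connected subtree $T_j$ of the dual graph of $C_0$ whose internal nodes smooth in the family, and I would reduce to showing $h^0\bigl(C_{\eta,j}, (S_\eta^\vee \otimes S_\eta)|_{C_{\eta,j}}(-2)\bigr) = 0$.

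To produce this vanishing by semi-continuity I would construct a line bundle $\mathcal{L}$ on $\mathcal{C}$ with $\mathcal{L}|_{C_{\eta,j}} = \mathcal{O}(-2)$ and specified multidegrees $\ell_i = \deg(\mathcal{L}|_{C_{0,i}})$ satisfying $\sum_{i\in T_j}\ell_i = -2$ for each $j$. Upper semi-continuity applied to $\pi_\ast(\mathcal{S}^\vee \otimes \mathcal{S} \otimes \mathcal{L})$ then gives
\[
h^0\bigl(C_{\eta,j}, (\mathcal{S}_\eta^\vee \otimes \mathcal{S}_\eta)(-2)\bigr) \;\leq\; h^0\bigl(C_0, (\mathcal{S}^\vee\otimes\mathcal{S}\otimes\mathcal{L})|_{C_0}\bigr),
\]
so it suffices to arrange for the right-hand side to vanish. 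The main obstacle is exactly this last step: a naive per-component bound $\ell_i \leq -2$ forcing each per-component $H^0$ to vanish violates the total constraint as soon as $|T_j|\geq 2$. I would instead use the node-gluing exact sequence
\[
0 \to H^0(C_0, F) \to \bigoplus_i H^0(C_{0,i}, F|_{C_{0,i}}) \xrightarrow{\mathrm{ev}} \bigoplus_{p\text{ node}} F_p
\]
with $F = \mathcal{S}^\vee \otimes \mathcal{S} \otimes \mathcal{L}|_{C_0}$ and choose the $\ell_i$ so that the evaluation map is injective. This reduces the lemma to a combinatorial analysis of the balanced splittings across the dual graph, exploiting both the rigid structure of balanced bundles on each $\mathbb{P}^1$-component and the freedom in distributing the total degree $-2$ among the $|T_j|$ components of $T_j$.
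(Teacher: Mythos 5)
You and the paper both reduce openness to a cohomological vanishing, but the paper's argument is a one-liner: it asserts that the non-balanced locus is exactly the support of the relative Ext sheaf $R^1\pi_*(\mathcal{S}^{\vee}\otimes\mathcal{S})$, which is closed, hence the balanced locus is open. Your criterion $h^0(\mathbb{P}^1, E^\vee\otimes E(-2))=0$ is the Serre dual of $H^1(\mathbb{P}^1, E^\vee\otimes E)=0$, so over the open stratum of irreducible fibers the two approaches coincide. They diverge exactly at the degeneration step you flag, and that difficulty is genuine: on a reducible $C$, ``balanced on every component'' does \emph{not} force $H^1(C,S^\vee\otimes S)=0$, because the node-evaluation map in your last display can fail to be surjective when the Harder--Narasimhan flags of the two branches are not transverse at a node. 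In fact the gap cannot be closed, because the lemma as literally stated is false. Take $\mathcal{C}=\mathrm{Bl}_{(q,0)}(\mathbb{P}^1\times\Delta)$ with exceptional divisor $E$ and $H$ the pullback of $\mathcal{O}_{\mathbb{P}^1}(1)$, and set $\mathcal{S}=\mathcal{O}_{\mathcal{C}}(2H-E)\oplus\mathcal{O}_{\mathcal{C}}$: the restriction of $\mathcal{S}$ to each of the two components of the central fiber is $\mathcal{O}(1)\oplus\mathcal{O}$ (balanced), while $\mathcal{S}|_{C_\eta}\cong\mathcal{O}(2)\oplus\mathcal{O}$ on the nearby smooth fiber is unbalanced. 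So the componentwise-balanced locus is not open.

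The paper's proof has exactly the same soft spot: the support of $R^1\pi_*(\mathcal{S}^{\vee}\otimes\mathcal{S})$ strictly contains the non-balanced locus, since it also picks up componentwise-balanced configurations with non-transverse flags such as the central fiber above, so the asserted identification is incorrect. What does hold, and what the later constructions actually need, is that $\{R^1\pi_*(\mathcal{S}^{\vee}\otimes\mathcal{S})=0\}$ is open (automatic) and dense; redefining $\mathfrak{Bun}_{0,n}(k,d)^{bal}$ to be this locus repairs the lemma and Proposition~\ref{bunbil}. Your stratum-by-stratum analysis correctly isolates the flag-transversality issue that the paper's one-line proof glosses over; the combinatorial step you were trying to complete is precisely where the counterexample lives.
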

\begin{proof} Let $\pi:\mathfrak{C}\to\mathfrak{Bun}_{0,n}(k,d)$ be the universal curve of $\mathfrak{Bun}_{0,n}(k,d)$. The non-balanced locus of $\mathfrak{Bun}_{0,n}(k,d)$ is the support of the sheaf $\pi_*Ext^1(\mathcal{S}, \mathcal{S})$.
\end{proof}
\begin{definition}Let $\mathfrak{Bun}_{g,n}(k,d)^{bal}$ be the substack of $\mathfrak{Bun}_{g,n}(k,d)$ whose points are bundles on curves, which are balanced on the rational tails as explained above.
\end{definition}
\begin{proposition} \label{bunbil}$({\mathfrak{Bun}_{g,n}(k,d)^{\epsilon}})^{bal}$ is an open substack of $\mathfrak{Bun}_{g,n}(k,d)$ and the complement has codimension at least 2.
\end{proposition}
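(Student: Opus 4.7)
My plan is to split the proof into openness and the codimension estimate, since the balanced condition is a ``fiberwise on rational tails'' condition that fits naturally into the earlier framework.

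For openness, I would work on the universal curve $\pi\colon\mathcal{C}\to\mathfrak{Bun}^{\epsilon}_{g,n}(k,d)$ together with the universal bundle $\mathcal{S}$. Let $\mathcal{C}^{rt}\subset\mathcal{C}$ be the closed subscheme of rational tails (its fibers over each point are the rational tail components of the curve). The balanced condition on a point $(C,S)$ is equivalent to the statement that $\mathit{Ext}^{1}(S|_{T},S|_{T})=0$ on every rational component $T$ of every rational tail of $C$, and this is exactly the lemma preceding the proposition (in the genus-zero case) applied componentwise to $\mathcal{C}^{rt}\to B$. Since the support of $R^{1}\pi_{*}\mathit{Hom}(\mathcal{S},\mathcal{S})|_{\mathcal{C}^{rt}}$ is closed and its image under $\pi$ is closed (using that $\pi$ restricted to the rational tail locus is proper), the non-balanced locus is closed, hence openness.

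For the codimension bound, I would first observe that the balanced condition is vacuous for curves without rational tails, so the complement of $(\mathfrak{Bun}^{\epsilon}_{g,n}(k,d))^{bal}$ is contained in $\mathfrak{Bun}_{g,n}(k,d)^{rt}$, which by the remark preceding the proposition is a divisor, i.e.\ has codimension exactly $1$. It then suffices to prove that the non-balanced locus is a proper substack of each irreducible component of $\mathfrak{Bun}_{g,n}(k,d)^{rt}$. I would stratify $\mathfrak{Bun}_{g,n}(k,d)^{rt}$ by the combinatorial type of the rational tails; the generic stratum of any irreducible component consists of curves with a single irreducible rational tail $T\cong\mathbb{P}^{1}$ of some fixed degree $d_{0}$, since chains of $\mathbb{P}^{1}$'s or multiple rational tails introduce additional nodes and hence higher codimension. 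At such a generic point, deforming only the bundle on $T$ (keeping the rest of $C$ and the attaching data fixed) exhibits a neighborhood as a $\mathfrak{Bun}_{0,1}(k,d_{0})$-factor; and in this factor the classical splitting-type stratification shows that the stratum of a non-balanced splitting type $(a_{1},\dots,a_{k})$ has positive codimension $h^{1}(\mathbb{P}^{1},\mathrm{End}(E))\geq 1$, with the most balanced splitting being the unique rigid one. Combining the two codimension-$1$ contributions yields the required bound.

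The main obstacle I anticipate is the cleanly parametrized local deformation argument at the generic point of an irreducible component of $\mathfrak{Bun}_{g,n}(k,d)^{rt}$: one has to match the bundle on $T$ at the node with the bundle on the complementary component, so the ``product'' description of the local structure is only up to the node-matching data. This is routine but needs care — concretely, one checks smoothness of the forgetful morphism to $\mathfrak{M}_{g,n}$ (Remark \ref{smooth}) and uses that bundle-restriction at the attaching point is a smooth morphism, so the codimensions of the non-balanced stratum on the rational tail and of the rational-tail divisor on $\mathfrak{M}_{g,n}$ genuinely add.
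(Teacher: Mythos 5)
Your proof is correct and follows essentially the same approach as the paper, but it supplies details the paper omits. The paper's proof is extremely terse: it sets $\mathfrak{Z}$ to be the support of the (relative) first $\mathcal{E}xt$ sheaf along the contraction $p\colon\mathcal{C}\to\hat{\mathcal{C}}$ from Lemma \ref{contraction}, identifies the balanced locus with the complement of $\mathfrak{Z}$, and simply asserts that $\mathfrak{Z}$ has codimension at least $2$. Your openness argument is the same idea (detect non-balancedness as the closed support of a first direct image of $\mathcal{E}nd(\mathcal{S})$ over the rational-tail locus), differing only in whether one pushes forward along $p$ or along $\pi$ restricted to the rational tails --- morally identical. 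Where you genuinely add value is the codimension bound: you make explicit the factorization into (i) the complement lives inside $\mathfrak{Bun}_{g,n}(k,d)^{rt}$, a divisor, and (ii) within the generic stratum of that divisor (a single irreducible $\mathbb{P}^1$-tail), the non-balanced splitting types form a locus of positive codimension by the classical $\mathbb{P}^1$ splitting-type stratification, together with the smoothness observations (Remark \ref{smooth}, restriction-at-the-node is smooth) needed to make the two codimension-one contributions add. That is precisely the argument the paper leaves to the reader. One small caution: the phrase ``support of $R^1\pi_*\mathcal{H}om(\mathcal{S},\mathcal{S})|_{\mathcal{C}^{rt}}$ $\ldots$ and its image under $\pi$'' is imprecise, since $R^1\pi_*$ already lands on the base; what you want is $R^1(\pi|_{\mathcal{C}^{rt}})_*\bigl(\mathcal{E}nd(\mathcal{S})|_{\mathcal{C}^{rt}}\bigr)$, and if you insist on the \emph{componentwise} balanced condition of Construction \ref{abal}, you should either take the $R^1$ along the map contracting each individual rational component separately, or note that the support of $R^1$ along the whole tail a priori only \emph{contains} the non-balanced locus (it also sees non-generic gluing), which is harmless for a closedness and codimension argument since any larger closed set still gives an open balanced locus, but worth flagging.
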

\begin{proof}
This follows just as before. Let $p:\mathcal{C}\to\hat{\mathcal{C}}$ the morphism from Lemma \ref{contraction} and let $\mathfrak{Z}$ be the support of the sheaf $p_*Ext^1(\mathcal{S}, \mathcal{S})$. Then ${\mathfrak{Bun}_{g,n}(k,d)^{\epsilon}}^{bal}$ is the complement of $\mathfrak{Z}$ in $\mathfrak{Bun}_{g,n}(k,d)$ and $\mathfrak{Z}$ has codimension at least 2.
\end{proof}

We recall here a cohomology and base change Lemma from \cite{mihnea} (Lemma 7.1 in \cite{mihnea}).
\begin{lemma}\label{cbpopa}
If $\mathcal{S}$ is a vector bundle on a family of curves $\mathcal{C}\to B$ such that $R^1p_*\mathcal{S}=0$, then $p_*\mathcal{S}$ is a flat family of coherent sheaves over $B$ and formation of $p_*\mathcal{S}$ commutes with arbitrary base change $B^{\prime}\to B$. 

\end{lemma}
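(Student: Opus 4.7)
The plan is to derive Lemma \ref{cbpopa} as a direct application of the standard cohomology and base change theorem (see e.g.\ Hartshorne III.12.11, from which \cite{mihnea} quotes the statement). First I would verify the standing hypotheses of that theorem: the morphism $p:\mathcal{C}\to B$ is proper and flat because it is a family of prestable curves, and the vector bundle $\mathcal{S}$ on $\mathcal{C}$ is flat over $B$, being locally free on the total space.

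With this in place, the key step is to chase the base change maps $\varphi^{i}(b):(R^{i}p_{*}\mathcal{S})\otimes k(b)\to H^{i}(\mathcal{C}_{b},\mathcal{S}_{b})$ for varying $i$. The vanishing $R^{1}p_{*}\mathcal{S}=0$ forces $\varphi^{1}(b)$ to be trivially surjective (its source is zero), so by the semicontinuity/base-change theorem $\varphi^{1}(b)$ is an isomorphism. In particular $H^{1}(\mathcal{C}_{b},\mathcal{S}_{b})=0$ for every $b\in B$. Since $\varphi^{1}(b)$ is surjective, the theorem then gives that $\varphi^{0}(b)$ is surjective, and hence an isomorphism. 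Finally, since $\varphi^{-1}$ is trivially surjective (target zero) the local freeness half of the criterion implies that $p_{*}\mathcal{S}$ is locally free, in particular a flat family of coherent sheaves on $B$.

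For compatibility with base change, given $u:B'\to B$ let $\mathcal{C}'=\mathcal{C}\times_{B}B'$ with induced projection $p'$ and let $\mathcal{S}'=u^{*}\mathcal{S}$ be the pullback. Fibers of $p'$ over $b'\in B'$ agree with fibers of $p$ over $u(b')$, so $H^{1}(\mathcal{C}'_{b'},\mathcal{S}'_{b'})=0$, and rerunning the preceding argument over $B'$ shows that $p'_{*}\mathcal{S}'$ is locally free. The natural comparison map $u^{*}p_{*}\mathcal{S}\to p'_{*}\mathcal{S}'$ is then a map of locally free sheaves which on each fiber agrees with the identity on $H^{0}(\mathcal{C}_{u(b')},\mathcal{S}_{u(b')})$ via the two $\varphi^{0}$'s, hence is an isomorphism.

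The main obstacle is more notational than conceptual: no new input beyond the standard cohomology and base change package is required, and the only care needed is to check that properness and $B$-flatness of $\mathcal{S}$ really hold in the relative setting in which the lemma will later be applied (which is the reason it is extracted as a separate statement here).
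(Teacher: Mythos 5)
The paper does not actually prove this lemma; it simply cites it as Lemma~7.1 of Popa--Roth \cite{mihnea}, so there is no in-paper proof to compare against. Your plan to supply the standard Grothendieck cohomology-and-base-change argument is the right one, and the verification of the standing hypotheses (properness and flatness of $p$, $B$-flatness of $\mathcal{S}$) is correct.

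There is, however, a genuine logical slip in the main chase. You write that $R^1p_*\mathcal{S}=0$ ``forces $\varphi^1(b)$ to be trivially surjective (its source is zero).'' A map whose \emph{source} is zero is injective, not surjective: such a map is surjective only when the target $H^1(\mathcal{C}_b,\mathcal{S}_b)$ is also zero, which is precisely what you are trying to establish. As stated, the argument assumes its own conclusion. The correct entry point is one degree up: since $p$ has one-dimensional fibers, $H^2(\mathcal{C}_b,\mathcal{S}_b)=0$ for every $b$, so $\varphi^2(b)$ is surjective because its \emph{target} vanishes. Then $R^2p_*\mathcal{S}=0$ is trivially locally free, so by part (b) of the theorem $\varphi^1(b)$ is surjective; by part (a) it is an isomorphism, and only now does $R^1p_*\mathcal{S}=0$ give $H^1(\mathcal{C}_b,\mathcal{S}_b)=0$. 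From there your argument resumes correctly: $R^1p_*\mathcal{S}=0$ is locally free, so $\varphi^0(b)$ is surjective, hence an isomorphism with commutation under base change, and $\varphi^{-1}$ trivially surjective (both sides zero) gives local freeness of $p_*\mathcal{S}$. Once that one step is repaired, the rest of your write-up, including the treatment of the comparison map $u^*p_*\mathcal{S}\to p'_*\mathcal{S}'$, is fine, though note that compatibility with base change already follows directly from the surjectivity of $\varphi^0$ via part (a), so the separate fiberwise check is redundant.
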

\begin{proposition} There exists a morphism $$r: \mathfrak{Bun}^{\epsilon}_{g,n}(k,d)^{bal}\to \mathfrak{Bun}^{\epsilon}_{g,n}(k,d)^{rtf}.$$
\end{proposition}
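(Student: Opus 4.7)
The plan is to define $r$ on $B$-valued points and check functoriality. Given a family $(\mathcal{C}\to B,\mathcal{S})$ of balanced $\epsilon$-stable bundles, I would first apply Lemma \ref{contraction} to obtain a contraction $p\colon\mathcal{C}\to\hat{\mathcal{C}}$ over $B$ with $\hat{\mathcal{C}}\to B$ a flat family of rational-tail-free prestable curves, and then construct a rank $k$, degree $d$ vector bundle $\hat{\mathcal{S}}$ on $\hat{\mathcal{C}}$ from $\mathcal{S}$. The morphism $r$ would send $(\mathcal{C},\mathcal{S})$ to $(\hat{\mathcal{C}},\hat{\mathcal{S}})$, and its existence as a morphism of stacks follows once the construction is shown to commute with arbitrary base change $B'\to B$.

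The key step is the construction of $\hat{\mathcal{S}}$. The natural candidate is a derived pushforward $p_*(\mathcal{S}\otimes\mathcal{O}(\mathcal{D}))$ for a divisor $\mathcal{D}$ on $\mathcal{C}$ supported along the rational tails, possibly followed by the torsion-free quotient or an elementary modification at the contracted points. The balanced condition $\mathcal{S}|_{C_i^0}=\mathcal{O}(a_i)^{\oplus k_1}\oplus\mathcal{O}(a_i-1)^{\oplus k-k_1}$ is exactly what makes such a construction work uniformly: the cohomology of the twisted restriction is constant along $B$, so by Lemma \ref{cbpopa} the pushforward is flat and compatible with base change. Once $R^1p_*=0$ is arranged, one can compute the Euler characteristic by projection formula and check that the correction by torsion at the contracted points recovers rank $k$ and degree $d$.

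With $\hat{\mathcal{S}}$ in hand, $\epsilon$-stability of $(\hat{\mathcal{C}},\hat{\mathcal{S}})$ follows from that of $(\mathcal{C},\mathcal{S})$: ampleness of $(\wedge^k\hat{\mathcal{S}}^\vee)^{\otimes\epsilon}\otimes\omega_{\hat{\mathcal{C}}}(\sum p_i)$ on $\hat{\mathcal{C}}$ can be checked away from the contracted points by restriction (where $\hat{\mathcal{S}}$ agrees with $\mathcal{S}$), and at the contracted points by comparing with the relatively ample line bundle $\mathcal{L}$ constructed in the proof of Lemma \ref{contraction}. Functoriality is then immediate from the base-change compatibility of the contraction (Proposition \ref{contr}) and of the pushforward (Lemma \ref{cbpopa}).

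The main obstacle lies in defining $\hat{\mathcal{S}}$: when $0<k_1<k$, no single integer twist of the form $\mathcal{S}(m\,C_i^0)$ produces a restriction to $C_i^0$ with $h^0=k$ and $h^1=0$, so a naive pushforward with a twist by a combination of components cannot yield a locally free sheaf of the correct rank. The construction must incorporate the canonical Harder--Narasimhan subbundle $\mathcal{O}(a_i)^{\oplus k_1}\subset\mathcal{S}|_{C_i^0}$ of the balanced bundle, either as data for an elementary modification at the node or via a combined kernel/cokernel of the natural inclusion $\mathcal{S}(a_iC_i^0)\hookrightarrow\mathcal{S}((a_i-1)C_i^0)$. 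Showing that this construction globalises correctly over $B$ and is compatible with base change — using that the balanced decomposition varies algebraically over $\mathfrak{Bun}_{g,n}^\epsilon(k,d)^{bal}$ — is the main technical point, and it is precisely the openness and codimension-$\geq 2$ statement of Proposition \ref{bunbil} that ensures the construction stays within a locus where this elementary modification is well-defined.
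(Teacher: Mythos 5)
Your overall plan --- contract the rational tails via Lemma \ref{contraction} and then push $\mathcal{S}$ forward after twisting by a divisor supported on the tails --- is exactly the paper's approach (following Popa--Roth). However, your paragraph identifying the ``main obstacle'' contains a genuine error that derails the construction.

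You claim that when $0<k_1<k$ no single integer twist $\mathcal{S}(m\,C_i^0)$ can have restriction with $h^0=k$ and $h^1=0$, and conclude that a naive pushforward ``cannot yield a locally free sheaf of the correct rank.'' The first statement is true but the conclusion is false: $h^0=k$ is \emph{not} the criterion for $p_*$ to be locally free. The relevant fact (Popa--Roth, Lemma 1.7) is that if $D$ is a relative $(-1)$-curve contracted by $p\colon\mathcal{C}\to\hat{\mathcal{C}}$ and $E$ is a vector bundle on $\mathcal{C}$ whose restriction to $D$ has the form $\mathcal{O}^{\oplus a}\oplus\mathcal{O}(-1)^{\oplus(k-a)}$, then $p_*E$ is locally free of rank $k$ near $p(D)$, essentially because $p_*\mathcal{O}_{\mathcal{C}}$ and $p_*\mathcal{O}_{\mathcal{C}}(D)$ are both invertible. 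The balanced condition is designed precisely so that \emph{one} integer twist achieves this form: with $\mathcal{S}|_{D_i}\cong\mathcal{O}(a_i)^{\oplus k_1}\oplus\mathcal{O}(a_i+1)^{\oplus(k-k_1)}$ and $\mathcal{O}(D_i)|_{D_i}\cong\mathcal{O}_{D_i}(-1)$, one gets $\mathcal{S}((a_i+1)D_i)|_{D_i}\cong\mathcal{O}(-1)^{\oplus k_1}\oplus\mathcal{O}^{\oplus(k-k_1)}$. Then $R^1p_*\mathcal{S}((a_i+1)D_i)=0$ so that pushforward commutes with base change (Lemma \ref{cbpopa}), and $p_*\mathcal{S}((a_i+1)D_i)$ is locally free of rank $k$ by Popa--Roth. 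No elementary modification, no Harder--Narasimhan data, and no appeal to Proposition \ref{bunbil} is needed: the proposition you cite is used later to control codimension for the virtual-class comparison, not to make this pushforward well defined. (Also, the inclusion you propose, $\mathcal{S}(a_iC_i^0)\hookrightarrow\mathcal{S}((a_i-1)C_i^0)$, goes the wrong way as a morphism of sheaves; the natural inclusion is $\mathcal{S}((a_i-1)C_i^0)\hookrightarrow\mathcal{S}(a_iC_i^0)$.) Replacing your ``obstacle'' paragraph with the correct local-freeness criterion and the corresponding single twist recovers the paper's short argument.
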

\begin{proof} The proof follows essentially from \cite{mihnea}. Let $\mathcal{C}$ be a flat family of curves over a base scheme $B$. Let $D_i$ be the intersection of the divisor $\mathfrak{Bun}^{\epsilon}_{g,n}(k,d)^{rt}$ with $\mathcal{C}$. Suppose that on each rational tail $\mathcal{S}$ splits on $D_i$ as $$\oplus^{k_1}\mathcal{O}(a_i)\oplus^{k-k_1}\mathcal{O}(a_i+1).$$ Then $\mathcal{R}^1p_*\mathcal{S}((a_i+1)D_i)=0$ and from Lemma 7.1. in \cite{mihnea} we obtain that $p_*\mathcal{S}((a_i+1)D_i)$ is a vector bundle.
\end{proof}

\subsection{Construction of $\widetilde{\mathfrak{Bun}}_{g,n}(k,r,d)$}
The goal of this section is to find a stack $\widetilde{\mathfrak{Bun}}_{g,n}(k,r,d)$ which surjects to $\mathfrak{Bun}_{g,n}(k,d)$ and a morphism $\widetilde{\mathfrak{Bun}}_{g,n}(k,r,d)\to \mathfrak{Bun}_{g,n}(k,d)^{rtf}$ which extends the morphism $r$.
\begin{notation}\label{notation} \label{abuse}Let $\mathcal{C}$ be a family of nodal curves over a scheme $B$. We denote by $\mathcal{U}$ the complement of the locus of rational tails in $\mathcal{C}$. By Proposition \ref{contr} there exists a family of rational tail free curves $\hat{\mathcal{C}}$ and a morphism $p:\mathcal{C}\to\hat{\mathcal{C}}$. Note that $\mathcal{U}$ can be identified canonically with the complement of the image of $p$ in $\hat{\mathcal{C}}$. By abuse of notation we say that $\mathcal{U}$ is also an open subset of $\hat{\mathcal{C}}$.  
\end{notation}


\begin{construction} \label{funcp}
Let $\mathfrak{P}(B)$ be the category whose objects are
\begin{equation*}
(\mathcal{C}\to B,p:\mathcal{C}\to \hat{\mathcal{C}}\to B,p_1,...,p_n,\mathcal{S}, \hat{\mathcal{S}}, p_*(\mathcal{S}^{\vee})\stackrel{\rho}{\rightarrow} \hat{\mathcal{S}}^{\vee})
\end{equation*}
with $\mathcal{C}$,  $\hat{\mathcal{C}}$ flat families of curves of genus $g$, $\mathcal{S}$ is a vector bundle of rank $k$ and degree $d$ on $\mathcal{C}$ and $\hat{\mathcal{S}}$ is a vector bundle of rank $k$ and degree $d$ on $\hat{\mathcal{C}}$, such that the following conditions hold
\begin{enumerate}
\item $p:\mathcal{C}\to \hat{\mathcal{C}}$ is the morphism of $B$-schemes from lemma \ref{contraction}

\item the restriction of $\rho$ to $\mathcal{U}$  is an isomorphism
\item $\omega(\sum p_i)\otimes \wedge^k S^{\epsilon}$ is ample for $\epsilon>2$
\item for each morphism $$p_*(S^{\vee})\stackrel{\rho}{\rightarrow} \hat{S}^{\vee}\to\tau\to0$$ and $P\in \hat{C}$ such that the fiber of $p$ over $P$ is one dimensional we have that $$length(\tau_P)>0.$$
\item the number of components of $\mathcal{C}$ is bounded by some $N$.
\end{enumerate}
{\bf Isomorphisms:} An isomorphism
\begin{equation*}
\psi:(p:\mathcal{C}\to\hat{\mathcal{C}},\mathcal{S},\rho:p_*\mathcal{S}^{\vee}\to\hat{\mathcal{S}}^{\vee})\to(p^{\prime}:\mathcal{C}^{\prime}\to\hat{\mathcal{C}}^{\prime},\mathcal{S}^{\prime}, \rho^{\prime}:p_*\hat{\mathcal{S}}^{\prime}\to{\mathcal{S}^{\vee}}^{\prime})
\end{equation*}
is an automorphism of curves
\begin{align*}
\phi:\mathcal{C}&\to\mathcal{C}^{\prime}\\
\hat{\phi}:\hat{\mathcal{C}}&\to\hat{\mathcal{C}}^{\prime}
\end{align*}
together with isomorphims  and $\theta: \mathcal{S}^{\vee}\to \phi^*{\mathcal{S}^{\prime}}^{\vee}$
 and $\hat{\theta}: \hat{\mathcal{S}}^{\vee}\to \phi^*\hat{\mathcal{S}^{\prime}}^{\vee}$
such that
\begin{enumerate}
\item $\phi(p_i)=p_i^{\prime}$ , $\forall i$
\item $p^{\prime}\circ\phi=\hat{\phi}\circ p$
\item the following diagram commutes
\begin{equation*}
\xymatrix{p_*\mathcal{S}^{\vee}\ar[r]^{\rho}\ar[d]_{\hat{\theta}_p}&\hat{\mathcal{S}}^{\vee}\ar[d]^{\hat{\theta}}\\
\hat{\phi}^*p_*{\mathcal{S}^{\prime}}^{\vee}\ar[r]^{\hat{\phi}^*\rho^{\prime}}&\hat{\phi}^*\hat{\mathcal{S}^{\prime}}^{\vee}}
\end{equation*}
where $\hat{\theta}_p:p_*\mathcal{S}^{\vee}\to p_*\phi^*{\mathcal{S}^{\prime}}^{\vee}$ is the isomorphism of sheaves induced by $\theta$ followed by the isomorphism $p_*\phi^*{\mathcal{S}^{\prime}}^{\vee}\simeq\hat{\phi}^*p_*{\mathcal{S}^{\prime}}^{\vee}$ from Lemma \ref{cbpopa}.
\end{enumerate}
\end{construction}
\begin{remark} Although $\mathfrak{P}$ and subsequently $\widetilde{\mathfrak{Bun}}_{g,n}(k,r,d)$  depends on $N$ we will omit $N$ from the notation. In the next section $N$ will be some large enough number (see Remark \ref{bounded} for more details).
\end{remark}
Let $\varphi:B^{\prime}\to B$ and let us consider the following cartesian diagram
\begin{equation*}
\xymatrix{\mathcal{C}^{\prime}\ar[r]^{\varphi}\ar[d]_{p^{\prime}}&\mathcal{C}\ar[d]^p\\
\hat{\mathcal{C}^{\prime}}\ar[r]^{\hat{\varphi}}&\hat{\mathcal{C}}
}
\end{equation*}
\begin{lemma} \label{geninj}Let $\mathcal{S}$ and $\mathcal{S}^{\prime}$ be locally free sheaves on a flat family of curves $\mathcal{C}\to B$ and $f:\mathcal{S}\to\mathcal{S}^{\prime}$ an injective morphism of sheaves. If $f$ is injective at the general point of every fiber of $\mathcal{C}\to B$, then the quotient
\begin{equation}\label{geninje}
0\to \mathcal{S}\stackrel{f}{\rightarrow}\mathcal{S}^{\prime}\to \mathcal{Q}\to 0
\end{equation}
is flat over $B$.
\end{lemma}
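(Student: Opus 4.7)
The plan is to apply the local criterion of flatness to the short exact sequence (\ref{geninje}). Since $\mathcal{S}$ and $\mathcal{S}^{\prime}$ are locally free on $\mathcal{C}$ and $\mathcal{C}\to B$ is flat, both $\mathcal{S}$ and $\mathcal{S}^{\prime}$ are $B$-flat. Tensoring (\ref{geninje}) with $k(b)$ over $\mathcal{O}_{B,b}$ and using $\mathrm{Tor}_1^{\mathcal{O}_{B,b}}(\mathcal{S}^{\prime},k(b))=0$ yields
$$0\to \mathrm{Tor}_1^{\mathcal{O}_{B,b}}(\mathcal{Q},k(b))\to \mathcal{S}_b\xrightarrow{f_b}\mathcal{S}_b^{\prime}\to \mathcal{Q}_b\to 0,$$
where $\mathcal{S}_b$, $\mathcal{S}_b^{\prime}$, $f_b$ denote the restrictions to the fiber $\mathcal{C}_b$. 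Flatness of $\mathcal{Q}$ over $B$ is thus equivalent to the injectivity of $f_b$ for every $b\in B$.

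The remaining task is to deduce fiberwise injectivity of $f_b$ from the hypothesis that $f_b$ is injective at the generic point of each irreducible component of $\mathcal{C}_b$. I would set $\mathcal{K}_b:=\ker(f_b)$, a coherent subsheaf of the locally free sheaf $\mathcal{S}_b$ whose stalk at every generic point of $\mathcal{C}_b$ is zero by assumption. The key algebraic input is that on the reduced (nodal) curve $\mathcal{C}_b$ every locally free sheaf embeds into the direct sum of its stalks at the generic points of the irreducible components; granting this, $\mathcal{K}_b$ maps injectively to a sheaf into which it maps to zero, so $\mathcal{K}_b=0$.

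The only genuine subtlety is checking this generic-stalk embedding locally at a node $p\in \mathcal{C}_b$, since there the local ring $\mathcal{O}_{\mathcal{C}_b,p}$ has zero divisors and ``torsion-free'' for a free module is not completely automatic. The hard part will be this nodal verification, but it is not serious: the natural map $\mathcal{O}_{\mathcal{C}_b,p}\hookrightarrow \mathcal{O}_{C_1,p}\times \mathcal{O}_{C_2,p}$ into the two analytic branches is injective, and tensoring with the finite free module $\mathcal{S}_{b,p}$ preserves this injection, producing the required embedding into the product of generic stalks. Together with the Tor computation in the first paragraph this shows $\mathrm{Tor}_1^{\mathcal{O}_{B,b}}(\mathcal{Q},k(b))=0$ for every $b\in B$, and hence $\mathcal{Q}$ is $B$-flat.
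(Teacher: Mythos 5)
Your proof is correct and follows essentially the same strategy as the paper's: reduce flatness of $\mathcal{Q}$ to the vanishing of $\mathrm{Tor}_1(\mathcal{Q},\mathcal{O}_B/\mathcal{I})$ by tensoring the short exact sequence, exploiting flatness of $\mathcal{S}'$, and then deducing the needed injectivity from the generic-point hypothesis. Where the paper simply asserts "from the hypothesis the map is injective" and defers to \cite{mihnea}, you carefully fill in the missing step — that a coherent subsheaf of a locally free sheaf on a reduced nodal curve which vanishes at all generic points must vanish, including the verification at the nodes via the embedding of $\mathcal{O}_{\mathcal{C}_b,p}$ into the branches — and you correctly specialize the ideal $\mathcal{I}$ to $\mathfrak{m}_b$ so that the local criterion of flatness applies cleanly, which is tidier than the paper's phrasing with arbitrary ideal sheaves.
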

\begin{proof} This follows from \cite{mihnea}. Let $\mathcal{I}$ be an ideal sheaf of a subscheme of $B$. Tensoring (\ref{geninje}) with $\mathcal{Q}$ we obtain a sequence
\begin{equation*}
0\to Tor_1(\mathcal{O}_B/\mathcal{I},\mathcal{Q})\to\mathcal{S}\otimes\mathcal{O}_B/\mathcal{I}\stackrel{}{\rightarrow}\mathcal{S}^{\prime}\otimes\mathcal{O}_B/\mathcal{I}\to \mathcal{Q}\otimes\mathcal{O}_B/\mathcal{I}\to 0.
\end{equation*}
From the hypothesis we have that the map $\mathcal{S}\otimes\mathcal{O}_B/\mathcal{I}\stackrel{}{\rightarrow}\mathcal{S}^{\prime}\otimes\mathcal{O}_B/\mathcal{I}$ is injective and therefore $Tor_1(\mathcal{O}_B/\mathcal{I},\mathcal{Q})=0$.
\end{proof}
\begin{lemma} Let $\varphi:B^{\prime}\to B$ and let $(\mathcal{C}, \mathcal{S},\hat{\mathcal{S}},p_*\mathcal{S}^{\vee}\rightarrow \hat{\mathcal{S}}^{\vee})$ be an object of $\mathfrak{P}(B)$. Let $\mathcal{C}^{\prime}:=\mathcal{C}\times_BB^{\prime}$, $\hat{\mathcal{C}}^{\prime}:=\hat{\mathcal{C}}\times_{B^{\prime}}B$, $U^{\prime}=U\times_BB^{\prime}$,$\mathcal{S}^{\prime}:=\varphi^*\mathcal{S}$ and $\hat{\mathcal{S}}^{\prime}:=\hat{\varphi}^*\hat{\mathcal{S}}$. Then the natural morphism $\rho^{\prime}:p^{\prime}_*{\mathcal{S}^{\prime}}^{\vee}\rightarrow \hat{\mathcal{S}^{\prime}}^{\vee}$ is an isomorphism on $U^{\prime}$.
\end{lemma}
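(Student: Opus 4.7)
The plan is to exploit the fact that, by Notation \ref{notation}, the restriction $p|_{\mathcal{U}} : \mathcal{U} \to \mathcal{U}$ is an isomorphism of schemes (where we identify $\mathcal{U} \subset \mathcal{C}$ with its image in $\hat{\mathcal{C}}$). Pushforward along an isomorphism is exact and commutes trivially with arbitrary base change, so all cohomology and base change subtleties disappear once we restrict to $\mathcal{U}$, and in particular no hypothesis from Lemma \ref{cbpopa} is needed for this statement.

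Concretely, I would first restrict the whole picture to the open subset $U \subset \hat{\mathcal{C}}$. Since $p|_U$ is an isomorphism, the canonical map $(p_*\mathcal{S}^\vee)|_U \to \mathcal{S}^\vee|_U$ is an isomorphism. After base change to $B'$, the morphism $p'|_{U'}: U' \to U'$ is again an isomorphism (being the pullback of an isomorphism along the cartesian square displayed just before the lemma), so we obtain an analogous identification $(p'_*(\mathcal{S}')^\vee)|_{U'} \simeq (\mathcal{S}')^\vee|_{U'} = \varphi^*(\mathcal{S}^\vee|_U)$. These two identifications are compatible with the natural base change morphism $\hat{\varphi}^* p_*\mathcal{S}^\vee \to p'_*(\mathcal{S}')^\vee$, because over $U$ (resp.\ $U'$) both sides are literally obtained from $\mathcal{S}^\vee$ (resp.\ $(\mathcal{S}')^\vee$) by transport along the invertible maps $p|_U$ and $p'|_{U'}$.

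Unwinding the construction of the natural map $\rho'$, as the composition of $\hat{\varphi}^*\rho$ with the base change morphism above, one sees that under these identifications $\rho'|_{U'}$ coincides with $\hat{\varphi}^*(\rho|_U)$. By condition (2) of Construction \ref{funcp} the morphism $\rho|_U$ is already an isomorphism, hence so is its pullback $\rho'|_{U'}$. The only point that requires care is making explicit the definition of the ``natural'' map $\rho'$ and the compatibility of the base change morphism with the isomorphism identifications on $U$ and $U'$; but since $p$ restricts to an honest isomorphism of schemes on $U$, this compatibility is formal and poses no real obstacle.
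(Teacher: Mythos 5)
Your key observation — that $p$ restricts to an isomorphism $\mathcal{U}\to\mathcal{U}$, so that over $\mathcal{U}$ pushforward is trivial and commutes with any base change — is correct, and it does give a cleaner verification that whatever restricts from $\rho'$ to $U'$ is an isomorphism. In that respect your route is simpler than the paper's, which reaches the same conclusion on $U'$ by a slightly roundabout appeal to Lemma \ref{geninj}.

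However, your claim that ``no hypothesis from Lemma \ref{cbpopa} is needed for this statement'' overlooks where that lemma actually enters. The content of the lemma is not merely a property of a morphism handed to you; it is needed to \emph{produce} the morphism $\rho'$ in the first place, which is required to say that the pullback of an object of $\mathfrak{P}(B)$ is again an object of $\mathfrak{P}(B')$. The datum one naturally obtains by pullback is $\hat{\varphi}^*\rho : \hat{\varphi}^*p_*\mathcal{S}^\vee \to (\hat{\mathcal{S}}')^\vee$, whose source is $\hat{\varphi}^*p_*\mathcal{S}^\vee$, not $p'_*(\mathcal{S}')^\vee$. The base change morphism runs $\hat{\varphi}^*p_*\mathcal{S}^\vee \to p'_*(\mathcal{S}')^\vee$, the wrong direction for precomposition, so to obtain a morphism out of $p'_*(\mathcal{S}')^\vee$ you need this base change map to be a \emph{global} isomorphism on $\hat{\mathcal{C}}'$, not just over $U'$. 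This is exactly what the paper extracts from Lemma \ref{cbpopa} (via the vanishing $R^1p_*\mathcal{S}^\vee=0$, i.e.\ $H^1(C^0,\mathcal{S}^\vee)=0$ on the contracted rational tails). Your argument shows the base change map is an isomorphism after restricting to $U'$, which is trivial since $p|_{\mathcal{U}}$ is the identity, but it says nothing over the attachment points, which is precisely where the cohomology-and-base-change hypothesis has content. So your proof of the literal assertion (``the restriction of $\rho'$ to $U'$ is an isomorphism'') is fine once $\rho'$ is granted, but the dismissal of Lemma \ref{cbpopa} conflates the easy open-locus statement with the genuinely non-trivial global identification that makes $\rho'$ well defined; you should either keep the appeal to Lemma \ref{cbpopa} for the existence of $\rho'$, or give a separate argument for the global base change isomorphism.
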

\begin{proof} By condition (2) we have that $$\hat{\varphi}^*\rho:\hat{\varphi}^*p_*(\mathcal{S}^{\vee})\rightarrow \hat{\varphi}^*\hat{\mathcal{S}}^{\vee}$$ is generically injective in every fiber of $\mathcal{C}\to B$. By Lemma \ref{geninj} we have that the restriction of $\varphi^*\rho$ is injective on $U$.  As $\rho$ is surjective on $U$, we have that $\hat{\varphi}^*\rho$ is surjective on $U$. This shows that $\varphi^*\rho$ is an isomorphism on $U$. As $H^1(C^0,\mathcal{S}^{\vee})=0$ for any fiber $C$, we have by Lemma \ref{cbpopa} that cohomology commutes with base change. This gives $$\hat{\varphi}^*p_*(\mathcal{S}^{\vee})\simeq p^{\prime}_*\varphi(\mathcal{S}^{\vee}).$$ Combining the two relation we obtain the conclusion.
\end{proof}

\begin{proposition} The functor $\mathfrak{P}$ is an Artin stack.
\end{proposition}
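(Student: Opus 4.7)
The plan is to realize $\mathfrak{P}$ as a locally closed substack of an explicit algebraic stack built from a fiber product of bundle moduli and a relative Hom construction. First, I would start from the Artin stack $\mathfrak{X}:=\mathfrak{Bun}^\epsilon_{g,n}(k,d)$ (for the $\epsilon>2$ appearing in condition (3)), which carries the universal pair $(\mathcal{C},\mathcal{S})$ and, by Lemma \ref{contraction}, a canonical contraction $p:\mathcal{C}\to\hat{\mathcal{C}}$ to a family of rational-tail-free curves. This contraction induces, via Proposition \ref{contr}, a morphism $\mathfrak{X}\to\mathfrak{M}^{rtf}_{g,n}$, and the fiber product
$$\mathfrak{W}\;:=\;\mathfrak{X}\times_{\mathfrak{M}^{rtf}_{g,n}}\mathfrak{Bun}_{g,n}(k,d)^{rtf}$$
is an Artin stack parameterising tuples $(\mathcal{C},\mathcal{S},\hat{\mathcal{C}},\hat{\mathcal{S}})$ in which $\hat{\mathcal{C}}$ is identified with the canonical contraction and $\hat{\mathcal{S}}$ is a rank $k$, degree $d$ bundle on it.

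Second, I would encode the remaining datum $\rho$ by forming a relative Hom stack over $\mathfrak{W}$. Since $\hat{\mathcal{S}}^\vee$ is locally free, $\mathcal{H}om_{\hat{\mathcal{C}}}(p_*\mathcal{S}^\vee,\hat{\mathcal{S}}^\vee)$ is a coherent sheaf on $\hat{\mathcal{C}}$, and the associated relative Hom functor
$$\mathfrak{H}\;:=\;\underline{\mathrm{Hom}}_{\hat{\mathcal{C}}/\mathfrak{W}}(p_*\mathcal{S}^\vee,\hat{\mathcal{S}}^\vee)\;\longrightarrow\;\mathfrak{W}$$
is algebraic over $\mathfrak{W}$, either by the general representability of Hom stacks between coherent sheaves on proper families, or by realising it as the abelian cone attached to the pushforward of $(p_*\mathcal{S}^\vee)^\vee\otimes\hat{\mathcal{S}}^\vee$ to $\mathfrak{W}$. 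Hence the stack of quadruples $(\mathcal{C},\mathcal{S},\hat{\mathcal{S}},\rho)$ is Artin.

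Third, I would carve out $\mathfrak{P}$ inside $\mathfrak{H}$ by conditions (1)--(5). Condition (1) is automatic from the construction of $\hat{\mathcal{C}}$; condition (5) selects an open-and-closed finite-type locus; condition (3) (ampleness) is open on the base; condition (2), that $\rho|_\mathcal{U}$ is an isomorphism, is open since the non-isomorphism locus of a morphism between coherent sheaves of the same generic rank is closed; within (2), condition (4) (positive cokernel length at each of the finitely many $p$-contraction points) says that $\det(\rho)$ vanishes at a prescribed finite set of points and is therefore closed. The intersection is locally closed in $\mathfrak{H}$, so $\mathfrak{P}$ is Artin. The stack conditions (descent of objects and isomorphisms) follow from classical descent of curves, vector bundles and morphisms of sheaves, combined with the base-change compatibility for $\rho$ already established in the preceding lemma.

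The main technical obstacle will be the representability of $\mathfrak{H}$: the source $p_*\mathcal{S}^\vee$ need not be flat over $\mathfrak{W}$, since Lemma \ref{cbpopa} only produces flatness when $R^1p_*\mathcal{S}^\vee=0$, and this can fail outside the balanced locus of Proposition \ref{bunbil}. This can be handled either by stratifying $\mathfrak{W}$ by the fiberwise dimension of $H^1(\mathcal{C},\mathcal{S}^\vee)$ and constructing $\mathfrak{H}$ stratum by stratum (on each stratum $p_*\mathcal{S}^\vee$ is flat and the classical representability via $\mathrm{Spec}\,\mathrm{Sym}$ of the pushforward applies), or by appealing to a Hom-stack representability result that does not require flatness of the source. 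Once this point is dealt with, the remainder of the proof is a routine assembly of Artinness through fiber products, relative Hom and locally closed substacks.
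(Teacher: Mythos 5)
Your route is genuinely different from the paper's, and it runs into a gap that the paper's construction is specifically designed to avoid. You build $\mathfrak{P}$ as a locally closed substack of a relative Hom stack
$\underline{\mathrm{Hom}}_{\hat{\mathcal{C}}/\mathfrak{W}}(p_*\mathcal{S}^\vee,\hat{\mathcal{S}}^\vee)$
over a fiber product of $\mathfrak{Bun}$ stacks. The paper instead bounds the family by taking a finite-type substack $\mathfrak{S}\subset\mathfrak{M}_{g,n}$ (curves with at most $N$ components), realizes $\mathcal{S}^\vee$, $p_*\mathcal{S}^\vee$ and $\hat{\mathcal{S}}^\vee$ as universal quotients on relative Quot schemes $\mathfrak{X}_m$, $\hat{\mathfrak{X}}_l$, $\hat{\mathfrak{X}}_m$, forms the algebraic Hom stack between the \emph{flat presentations} $\mathcal{O}(-l)^{P(l)}$ and $\mathcal{O}(-m)^{P(m)}$ (citing \cite{ml}), carves out the locus where this Hom descends to $\rho\colon\mathcal{F}\to\mathcal{F}'$ on the quotients, imposes $p_*\mathcal{B}=\mathcal{F}$ as a further locally closed condition, and finally takes a quotient by $GL_l\times GL_m\times PGL(V)$ using the rigidified Hilbert-scheme model of \cite{mop}. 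The Quot-scheme detour is exactly what lets the argument be uniform across strata, and it also supplies the atlas needed to exhibit $\mathfrak{P}$ as a genuine quotient stack rather than a 2-stack.

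The gap in your proposal is sharper than ``$\underline{\mathrm{Hom}}$ with non-flat source may not be representable.'' Even granting representability, your Hom stack has the wrong functor of points off the balanced locus. Its $B$-points are morphisms from the \emph{pullback} of the universal $p_*\mathcal{S}^\vee$ along $B\to\mathfrak{W}$, whereas an object of $\mathfrak{P}(B)$ requires a morphism from $(p_B)_*\mathcal{S}_B^\vee$, i.e.\ the pushforward recomputed after base change. These agree exactly when $R^1p_*\mathcal{S}^\vee$ vanishes (Lemma \ref{cbpopa}), which is the content of Proposition \ref{bunbil} on the balanced locus but fails in general on $\mathfrak{Bun}^\epsilon$. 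Your proposed stratification by $\dim H^1$ would make each stratum algebraic, but you would then need to argue that glueing the strata reproduces the functor $\mathfrak{P}$ — and on the boundary between strata the ``universal'' $p_*\mathcal{S}^\vee$ jumps, so this is not automatic. The paper circumvents both issues at once: on the Quot scheme the ``candidate'' $\mathcal{F}$ is flat by fiat, and the identification with the true $p_*\mathcal{B}$ is imposed afterwards as a locally closed condition, so no base-change hypothesis is ever needed. A minor further slip: the bound $N$ on the number of components cuts out an open substack of $\mathfrak{M}_{g,n}$ (the number of components is lower semicontinuous), not an open-and-closed one; this does not affect the argument since you only need locally closed.
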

\begin{proof} 
Let $\mathfrak{S}\subset \mathfrak{M}_{g,n}$ be the subset of finite type of curves with at most $N$ components and let $\pi:\mathcal{C}\to\mathfrak{S}$ be the universal curve. Consider $\mathfrak{X}_m=Quot_P(\mathcal{O}(-m)^{P(m)})$ the $\pi$-relative $Quot$ scheme and let $\mathcal{F}$ be the universal quotient. Similarly, let $\hat{\mathfrak{S}}\subset\mathfrak{M}^{rtf}_{g,n}$ the image of $\mathfrak{S}$ via the morphism which contracts rational tails and $\pi:\hat{\mathcal{C}}\to\hat{\mathfrak{S}}$ be the universal curve. Consider $\hat{\mathfrak{X}}_m=Quot_m(\mathcal{O}(-m)^{P(m)})$ the $\pi$-relative $Quot$ scheme. By \cite{ml} we have that  $Hom_{\hat{\mathfrak{X}}_l\times_{\hat{\mathfrak{S}}} \hat{\mathfrak{X}}_m}(\mathcal{O}(-l)^{P(l)},\mathcal{O}(-m)^{P(m)})$ is an Artin stack. Let $\mathfrak{H}$ be the locally closed locus in $$Hom_{\hat{\mathfrak{X}}_l\times_{\hat{\mathfrak{S}}} \hat{\mathfrak{X}}_m}(\mathcal{O}(-l)^{P(l)},\mathcal{O}(-m)^{P(m)})$$ defined by the the following
\begin{enumerate}
\item the universal quotient $\mathcal{F}^{\prime}$ on the second factor is locally free
\item $\mathcal{F}(l)$ and $\mathcal{F}^{\prime}(m)$ are generated by global sections and higher cohomology of $\mathcal{F}(l)$ and $\mathcal{F}^{\prime}(m)$ vanishes
\item the morphism $\mathcal{O}(-l)^{P(l)}\to\mathcal{O}(-m)^{P(m)}$ induces a morphism $$\rho:\mathcal{F}\to\mathcal{F}^{\prime}$$

\item $\omega(\sum p_i)\otimes \wedge^k {\mathcal{F}}^{\epsilon}$ is ample for $\epsilon>2$
\item for each morphism $$\mathcal{F}\stackrel{\rho}{\rightarrow} \mathcal{F}^{\prime}\to\tau\to0$$ and $P\in \hat{C}$ such that the fiber of $p$ over $P$ is one dimensional we have that $$length(\tau_P)>0.$$
\end{enumerate}
Let us explain Condition 3 above. Denote by $0\to\mathcal{K}\to\mathcal{O}(-l)^{P(l)}\stackrel{j}{\rightarrow}\mathcal{F}\to0$ and $0\to\mathcal{K}\to\mathcal{O}(-l)^{P(l)}\stackrel{q}{\rightarrow}\mathcal{F}^{\prime}\to0$ the tautological sequences on  $\mathfrak{X}_l$,  $\mathfrak{X}_m$ respectively and $f:\mathcal{O}(-l)^{P(l)}\to\mathcal{O}(-m)^{P(m)}$ the tautological morphism. Then condition 3 translates in $q\circ f\circ j=0$. This is an open condition. Then Let $\mathfrak{A}_{l,m}=\mathfrak{X}_l\times_{\hat{\mathcal{C}}}\mathfrak{H}$ where the morphism $\mathfrak{X}_m\to\hat{\mathcal{C}}$ is the composition $\mathfrak{X}_m\to\mathcal{C}\stackrel{p}{\rightarrow}\hat{\mathcal{C}}$. Let $\mathfrak{B}$ be the locally closed locus in $\mathfrak{A}$ such that
\begin{enumerate}
\item the universal quotient $\mathcal{B}$ on $\mathfrak{X}_m$ is a locally free
\item the restriction of $\rho$ to $\mathcal{U}$  is an isomorphism
\item $p_*\mathcal{B}=\mathcal{F}$.
\end{enumerate}
We have that $G_{l,m}={\bf Gl}_l\times {\bf Gl}_m$ acts on $\mathfrak{A}$. We would now like to take $\cup_{l,m}\mathfrak{A}_{l,m}/G_{l,m}$ but in general such a quotient would only define a 2-stack. Let us further sketch how to describe $\mathfrak{P}$ as a quotient of a scheme by a group. The construction is rather standard. Take $\mathcal{H}^{\prime}$ as in \cite{mop} Section 6.1. More precisely $\mathcal{H}^{\prime}$ is a subscheme of $Hilb(\pp(V))\times\pp(V)^n$, where $Hilb(\pp(V))$ is the Hilbert scheme of of genus $g$ curves and degree $F=1 
-g + k(d + 1)(2g -2 + n) + kd $ for $k\geq5$ in the projective space $\pp(V)$ with $V\simeq\C^F$. Then $\mathcal{H}^{\prime}$ comes equipped with a universal curve $\mathcal{C}^{\prime}$, a contraction of rational tails $\mathcal{C}^{\prime}\to\hat{\mathcal{C}}^{\prime}$ and a ${\bf PGL}(V)$ action. If in the above construction we replace $\mathcal{C}$ by $\mathcal{C}^{\prime}$ and $\hat{\mathcal{C}}$ by $\hat{\mathcal{C}}^{\prime}$ we get a scheme $A_{l,m}=X_l\times_{\hat{\mathcal{C}}^{\prime}}H$ with $H$ a subscheme of $$Hom_{\hat{X}_l\times_{\hat{\mathcal{C}}^{\prime}} \hat{X}_m}(\mathcal{O}(-l)^{P(l)},\mathcal{O}(-m)^{P(m)}).$$ Then $\mathfrak{P}$ is the stack quotient $\cup_{l,m}[A_{l,m}/G_{l,m}\times {\bf PGL}(V)]$. This concludes the proof.
\end{proof}
\begin{remark} By the stability condition for stable maps implies that the restriction of $S^{\vee}$ to any rational tail has positive degree. This together with condition 4 in Construction \ref{funcp} shows that the restriction of $\hat{S}^{\vee}$ to any unstable component of $\hat{C}$ has positive degree. This means that $\omega(\sum p_i)\otimes \wedge^k S^{\epsilon}$ is ample for any $\epsilon>0$.
\end{remark}

\begin{lemma}There exists a morphism 
$$r:\mathfrak{Bun}^{bal}_{g,n}(k,d)\to\mathfrak{P}$$
\end{lemma}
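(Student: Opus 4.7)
The strategy is to build an object of $\mathfrak{P}(B)$ out of a balanced bundle $(\mathcal{C}\to B, p_1,\dots,p_n, \mathcal{S})$ by combining Lemma~\ref{contraction} with the vector-bundle construction of the preceding proposition.

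First I would apply Lemma~\ref{contraction} to produce the rational-tail-free family $\hat{\mathcal{C}}\to B$ and the contraction $p:\mathcal{C}\to\hat{\mathcal{C}}$; this supplies the underlying geometry and settles condition (1) of Construction~\ref{funcp} tautologically. Next, on the balanced locus the splitting type $(a_i,k_1)$ of $\mathcal{S}$ on each rational tail is pinned down by its degree and hence is locally constant on $B$, so after stratifying by combinatorial type one may assume $(a_i,k_1)$ constant. Letting $D_i\subset\mathcal{C}$ be the reduced divisor supported on the $i$-th family of rational tails, I would set
\[
\hat{\mathcal{S}}^{\vee}\ :=\ p_{*}\bigl(\mathcal{S}^{\vee}\otimes\mathcal{O}(a_i D_i)\bigr),
\]
with the twists chosen so that $R^{1}p_{*}$ of the twisted sheaf vanishes fibrewise. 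By Lemma~\ref{cbpopa} this forces $\hat{\mathcal{S}}^{\vee}$ to be a locally free sheaf whose formation commutes with arbitrary base change, and the canonical inclusion $\mathcal{S}^{\vee}\hookrightarrow\mathcal{S}^{\vee}\otimes\mathcal{O}(a_i D_i)$ pushes forward to the morphism $\rho:p_{*}\mathcal{S}^{\vee}\to\hat{\mathcal{S}}^{\vee}$ required by $\mathfrak{P}$.

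Verifying the remaining conditions of Construction~\ref{funcp} is then largely bookkeeping: condition (2) holds because $p|_{\mathcal{U}}$ is an isomorphism and the twist is trivial away from the $D_i$; condition (3) is the ampleness built into the superscript $\epsilon$; condition (4) follows because on any non-trivially contracted fibre the cokernel of $\mathcal{S}^{\vee}\hookrightarrow\mathcal{S}^{\vee}(a_i D_i)$ has strictly positive length; and condition (5) is the boundedness on the number of irreducible components that one imposes at the outset.

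The main obstacle, as the introduction warns, is functoriality: the twisting data $(D_i,a_i)$ depends on the combinatorial type of the family, and so cannot be globalised in a naive way. I would handle this by checking, for each base change $\varphi:B'\to B$, that both $\hat{\mathcal{S}}^{\vee}$ and $\rho$ are preserved under pullback. This is precisely the content of the base-change lemma stated just before the proposition that $\mathfrak{P}$ is an Artin stack, and the balanced hypothesis is exactly what makes the vanishing $R^{1}p_{*}=0$ hold, so that Lemma~\ref{cbpopa} applies and the stratum-by-stratum constructions glue canonically into a well-defined morphism of stacks.
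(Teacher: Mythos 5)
Your construction coincides with the paper's: contract rational tails via Lemma~\ref{contraction}, twist by the appropriate multiples of the rational-tail divisors, push forward, and invoke the Popa--Roth lemmas to obtain the bundle $\hat{\mathcal{S}}^{\vee}$ and the map $\rho$ from the canonical inclusion $\mathcal{S}^{\vee}\hookrightarrow\mathcal{S}^{\vee}(a_iD_i)$. One small slip: Lemma~\ref{cbpopa} (Lemma~7.1 of \cite{mihnea}) gives flatness of the pushforward over the base and its compatibility with base change, but does not by itself assert local freeness of $p_*$ of the twisted sheaf; the paper separately cites Lemma~1.7 of \cite{mihnea} for that local-freeness claim, and you should do likewise.
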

\begin{proof}
This is essentially Proposition 7.2 in \cite{mihnea}. Let $$r(\mathcal{S}^{\vee})=(\mathcal{S}^{\vee},p_*\mathcal{S}^{\vee}\stackrel{\rho}{\rightarrow} p_*(\mathcal{S}(-a_iD_i))^{\vee})$$ where $\rho$ is the map induced by $\mathcal{S}^{\vee}\stackrel{D^{a_i}}{\rightarrow} \mathcal{S}^{\vee}(a_iD_i)$. The map is well defined since $p_*\mathcal{S}(-a_iD_i)$ is a vector bundle by Lemma 1.7 in \cite{mihnea}.
\end{proof}
\begin{construction} Let $\widetilde{\mathfrak{Bun}}_{g,n}(k,d)$ be the irreducible component of $\mathfrak{P}$ which contains the image of $r$.
\end{construction}
\begin{remark}Note that as $d$ becomes large $\mathfrak{P}$ might not be irreducible.
\end{remark}
\begin{lemma}\label{dimension}The stack $\widetilde{\mathfrak{Bun}}_{g,n}(k,d)$ has pure dimension equal to the dimension of $\mathfrak{Bun}_{g,n}(k,d)$.
\end{lemma}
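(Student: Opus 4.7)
The plan is to exhibit a birational morphism $\pi_1 \colon \widetilde{\mathfrak{Bun}}_{g,n}(k,d) \to \mathfrak{Bun}_{g,n}(k,d)$ and then transfer the dimension formula of Remark \ref{smooth}, which gives $\dim \mathfrak{Bun}_{g,n}(k,d) = k^2(g-1)+3g-3+n$. The morphism $\pi_1$ is the obvious forgetful map sending $(\mathcal{C} \to \hat{\mathcal{C}}, p_1,\ldots,p_n, \mathcal{S}, \hat{\mathcal{S}}, \rho)$ to $(\mathcal{C}, p_1,\ldots,p_n, \mathcal{S})$, well-defined on all of $\mathfrak{P}$; by the previous lemma the map $r$ from $\mathfrak{Bun}^{bal}_{g,n}(k,d)$ into $\mathfrak{P}$ is a section of $\pi_1$ over the balanced locus, and by definition $\widetilde{\mathfrak{Bun}}_{g,n}(k,d)$ is the irreducible component of $\mathfrak{P}$ containing the image of $r$.

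The first observation I would record is that $\mathfrak{Bun}^{bal}_{g,n}(k,d)$ is open and dense in $\mathfrak{Bun}_{g,n}(k,d)$: the open substack $\mathfrak{Bun}^{rtf}_{g,n}(k,d)$, on which balancedness is vacuous, is dense because having a rational tail is a codimension-one condition (Remark after Construction of $\mathfrak{Bun}^{rt}$), and openness of the balanced locus on top of this is exactly the content of the lemma preceding Construction \ref{abal}. The second, more substantive step is to show that $r$ is an open immersion, so that its image is an open dense substack of $\widetilde{\mathfrak{Bun}}_{g,n}(k,d)$. Here one argues that on the balanced locus the remaining data $(\hat{\mathcal{S}}, \rho)$ is canonically determined by $(\mathcal{C}, \mathcal{S})$: condition (2) of Construction \ref{funcp} forces $\hat{\mathcal{S}}^\vee$ to extend $p_*\mathcal{S}^\vee$ across the finitely many images of rational tails, condition (4) fixes the length of the cokernel, and the balancedness hypothesis combined with Lemma \ref{cbpopa} shows (as in the proof of the $r$-lemma and in Proposition 7.2 of \cite{mihnea}) that the unique such extension is the vector bundle $p_*(\mathcal{S}(-a_i D_i))^\vee$. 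Conversely, these conditions are open in families, so a small deformation of a balanced object in $\widetilde{\mathfrak{Bun}}_{g,n}(k,d)$ still lies in the image of $r$.

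Putting these two steps together, $r$ identifies the smooth open dense substack $\mathfrak{Bun}^{bal}_{g,n}(k,d) \subset \mathfrak{Bun}_{g,n}(k,d)$ with a dense open substack of $\widetilde{\mathfrak{Bun}}_{g,n}(k,d)$. Since the source is smooth of pure dimension $k^2(g-1)+3g-3+n$ by Remark \ref{smooth}, and since the complement of an open dense substack does not contribute new irreducible components, $\widetilde{\mathfrak{Bun}}_{g,n}(k,d)$ is equidimensional of the same dimension. The main obstacle in this program is verifying that $r$ is genuinely an open immersion rather than just a dominant map onto $\widetilde{\mathfrak{Bun}}_{g,n}(k,d)$; this requires a careful local analysis at points where rational tails appear, using the rigidity of reflexive hulls together with the torsion and balancedness conditions to rule out nontrivial deformations of $(\hat{\mathcal{S}}, \rho)$ that fix $(\mathcal{C}, \mathcal{S})$. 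Once this rigidity is in hand, the dimension statement is essentially formal.
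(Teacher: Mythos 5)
Your strategy is sound and shares the basic plan with the paper (identify an open dense substack of $\widetilde{\mathfrak{Bun}}_{g,n}(k,d)$ that can be matched with a known-dimensional open substack of $\mathfrak{Bun}_{g,n}(k,d)$), but the open set you choose is larger than the one the paper uses, and this makes your version noticeably harder to close. The paper takes $\mathfrak{U} := \widetilde{\mathfrak{Bun}}_{g,n}(k,d)\times_{\mathfrak{M}_{g,n}}\mathfrak{M}^{sm}_{g,n}$, the locus over smooth curves, where the contraction $p:\mathcal{C}\to\hat{\mathcal{C}}$ is the identity and condition (2) of Construction \ref{funcp} forces $\rho$ to be an isomorphism of $\mathcal{S}^\vee$ with itself. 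Over that locus an object is just $(\mathcal{S}^\vee,\rho:\mathcal{S}^\vee\xrightarrow{\sim}\mathcal{S}^\vee)$, which can be normalized to $(\mathcal{S}^\vee,\mathrm{id})$, and one checks on automorphism groups that $\mathfrak{U}\simeq\mathfrak{Bun}_{g,n}(k,d)\times_{\mathfrak{M}_{g,n}}\mathfrak{M}^{sm}_{g,n}$; irreducibility of $\widetilde{\mathfrak{Bun}}_{g,n}(k,d)$ then immediately gives the dimension. You instead work over the full balanced locus, which includes curves with rational tails, and you therefore need the substantive claim that $r:\mathfrak{Bun}^{bal}_{g,n}(k,d)\to\mathfrak{P}$ is an open immersion onto its image inside $\widetilde{\mathfrak{Bun}}_{g,n}(k,d)$. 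You identify this as the main obstacle, and correctly: the issue is not just that $(\hat{\mathcal{S}},\rho)$ is determined pointwise by $(\mathcal{C},\mathcal{S})$ on the balanced locus (plausible via Lemma \ref{cbpopa} and Popa--Roth), but that $\pi_1^{-1}(\mathfrak{Bun}^{bal})\cap\widetilde{\mathfrak{Bun}}_{g,n}(k,d)$ has no extra first-order deformations fixing $(\mathcal{C},\mathcal{S})$, i.e. that $r$ and $\pi_1$ are mutually inverse there. That rigidity is true but is precisely where the nontrivial work lies; the paper avoids it by restricting further to smooth curves, where the rigidity is manifest. In short: your route is more general and would give a stronger statement (that $\pi_1$ is an isomorphism over the whole balanced locus, not just over smooth curves), at the cost of an extra lemma that you have not actually proved but only sketched; the paper's route is more economical for the equidimensionality statement alone.
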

\begin{proof} Let $\mathfrak{U}$ be the product $\widetilde{\mathfrak{Bun}}_{g,n}(k,d)\times_{\mathfrak{M}_{g,n}}\mathfrak{M}^{sm}_{g,n}$, where $\mathfrak{M}^{sm}_{g,n}$ is the locus in $\mathfrak{M}_{g,n}$ of smooth curves. Then $\mathfrak{U}$ is an open substack of $\widetilde{\mathfrak{Bun}}_{g,n}(k,d)$. As $\widetilde{\mathfrak{Bun}}_{g,n}(k,d)$ is irreducible it is enough to show that $\mathfrak{U}$ has pure dimension equal to the dimension of $\mathfrak{Bun}_{g,n}(k,d)$. For this we show that $\mathfrak{U}\simeq \mathfrak{Bun}_{g,n}(k,d)\times_{\mathfrak{M}_{g,n}}\mathfrak{M}^{sm}_{g,n}$. Objects of $\mathfrak{U}$ are pairs $(\mathcal{S}^{\vee},\mathcal{S}^{\vee}\stackrel{\rho}{\rightarrow}\mathcal{S}^{\vee})$ with $\rho$ an isomorphism and any such object is isomorphic to $(\mathcal{S}^{\vee},\mathcal{S}^{\vee}\stackrel{id}{\rightarrow}\mathcal{S}^{\vee})$ by composing $\rho$ with $\rho^{-1}$. It is clear that any isomorphism of $(\mathcal{S}^{\vee},\mathcal{S}^{\vee}\stackrel{id}{\rightarrow}\mathcal{S}^{\vee})$ induces an isomorphism of $\mathcal{S}$. Vice versa for any automorphism $\theta$ of $\mathcal{S}$ there exists a unique automorphism $\hat{\theta}=\theta$ of $\mathcal{S}$ such that the diagram commutes
\begin{equation*}\xymatrix{\mathcal{S}\ar[r]^{id}\ar[d]_{\theta}&\mathcal{S}\ar[d]^{\hat{\theta}}\\
\mathcal{S}\ar[r]^{id}&\mathcal{S}.}
\end{equation*}
This concludes the proof.
\end{proof}
\begin{remark}We have a diagram
\begin{equation*}
\xymatrix{&\widetilde{\mathfrak{Bun}}_{g,n}(k,d)\ar[ld]_{\pi_1}\ar[rd]^{\pi_2}\\
\mathfrak{Bun}_{g,n}(k,d)\ar@{-->}[rr]^r&&\mathfrak{Bun}^{rtf}_{g,n}(k,d)}
\end{equation*}
such that $r\circ\pi_1=\pi_2$.
\end{remark}
\begin{proof}
We have a morphism 
\begin{align*}\mathfrak{P}&\to \mathfrak{Bun}_{g,n}(k,d)\times_{\mathfrak{M}_{g,n}} \mathfrak{Bun}_{g,n}(k,d) \\
(p:\mathcal{C}\to\hat{\mathcal{C}},\mathcal{S},p_*\mathcal{S}^{\vee}\to \hat{\mathcal{S}}^{\vee})&\mapsto(\mathcal{S}, \hat{\mathcal{S}}).
\end{align*} We define $\pi_i$ to be the composition of the natural projection of the product to the $i$'th factor composed with the above map.
\end{proof}

\section{Stable map-quotients}\label{map-quot}
\subsection{Construction}
In this section we construct a proper algebraic stack which surjects to both moduli spaces of stable maps to $G(k,n)$ and stable quotients.

\begin{definition}\label{def} Let $\bar{P}$ be the category whose objects are $$(p:\mathcal{C}\to\hat{\mathcal{C}}, \mathcal{O}_{\mathcal{C}}^{\oplus r}\stackrel{s}{\rightarrow}\mathcal{S}^{\vee}, p_*(\mathcal{S}^{\vee})\stackrel{\rho}{\rightarrow} \hat{\mathcal{S}}^{\vee})$$ such that
\begin{enumerate} 
\item $p:\mathcal{C}\to\hat{\mathcal{C}}$ is the morphism of Lemma \ref{contraction} and $\mathcal{U}$ is as in Notation \ref{abuse}
\item $\mathcal{O}_{\mathcal{C}}^{\oplus r}\stackrel{s}{\rightarrow}\mathcal{S}^{\vee},$ is a stable map of rank $k$ and degree $d$.

\item $\hat{\mathcal{S}}$ is a rank $k$ degree $d$ locally free sheaf
\item the morphism of sheaves $\rho: p_*\mathcal{S}^{\vee}\to \hat{\mathcal{S}}^{\vee}$ is an isomorphism on $\mathcal{U}$
\item $\omega(\sum p_i)\otimes \wedge^k \hat{S}^{\epsilon}$ is ample for any $\epsilon>0$.
\item for each morphism $$p_*(S^{\vee})\stackrel{\rho}{\rightarrow} \hat{S}^{\vee}\to\tau\to0$$ and $P\in \hat{C}$ such that the fiber of $p$ over $P$ is one dimensional we have that $$length(\tau_P)>0.$$

\end{enumerate}
{\bf Isomorphisms:} An isomorphism
\begin{equation*}
\psi:(\mathcal{C}\stackrel{p}{\rightarrow}\hat{\mathcal{C}},\mathcal{O}_{\mathcal{C}}^{\oplus r}\stackrel{s}{\rightarrow}\mathcal{S}^{\vee},p_*(\mathcal{S}^{\vee})\stackrel{\rho}{\rightarrow}\hat{\mathcal{S}}^{\vee})\to(\mathcal{C}^{\prime} \stackrel{p^{\prime}}{\rightarrow}\hat{\mathcal{C}}^{\prime},\mathcal{O}_{\mathcal{C}^{\prime}}^{\oplus r}\stackrel{s^{\prime}}{\rightarrow}{\mathcal{S}^{\prime}}^{\vee},p^{\prime}_*({\mathcal{S}^{\prime}}^{\vee})\stackrel{\rho^{\prime}}{\rightarrow}\hat{\mathcal{S}^{\prime}}^{\vee})
\end{equation*}
is an automorphism of curves
\begin{align*}
\phi:\mathcal{C}&\to\mathcal{C}^{\prime}\\
\hat{\phi}:\hat{\mathcal{C}}&\to\hat{\mathcal{C}}^{\prime}
\end{align*}
together with isomorphims  and $\theta: \mathcal{S}^{\vee}\to \phi^*{\mathcal{S}^{\prime}}^{\vee}$
 and $\hat{\theta}: \hat{\mathcal{S}}^{\vee}\to \phi^*\hat{\mathcal{S}^{\prime}}^{\vee}$
 such that
\begin{enumerate}
\item $\phi(p_i)=p_i^{\prime}$ , $\forall i$
\item $p^{\prime}\circ\phi=\hat{\phi}\circ p$
\item the following diagram commutes
\begin{equation*}
\xymatrix{\mathcal{O}_{\mathcal{C}}^{\oplus r}\ar[d]\ar[r]^s&\mathcal{S}^{\vee}\ar[d]^{\theta}\\
\phi^*\mathcal{O}_{\mathcal{C}^{\prime}}^{\oplus r}\ar[r]^{\phi^*s^{\prime}}&\phi^*{\mathcal{S}^{\prime}}^{\vee}}
\end{equation*}
\item the following diagram commutes
\begin{equation*}
\xymatrix{p_*\mathcal{S}^{\vee}\ar[r]^{\rho}\ar[d]_{\hat{\theta}_p}&\hat{\mathcal{S}}^{\vee}\ar[d]^{\hat{\theta}}\\
\hat{\phi}^*p_*{\mathcal{S}^{\prime}}^{\vee}\ar[r]^{\hat{\phi}^*\rho^{\prime}}&\hat{\phi}^*\hat{\mathcal{S}^{\prime}}^{\vee}}
\end{equation*}
where $\hat{\theta}_p:p_*\mathcal{S}^{\vee}\to\hat{\phi}^*p_*{\mathcal{S}^{\prime}}^{\vee}$ is the isomorphism of sheaves induced by $\theta$ followed by the isomorphism $p_*\phi^*{\mathcal{S}^{\prime}}^{\vee}\simeq\hat{\phi}^*p_*{\mathcal{S}^{\prime}}^{\vee}$ from Lemma \ref{cbpopa}.
\end{enumerate}
We call a point in $\bar{P}$ a map-quotient or for short an m\&q.
\end{definition}
\begin{remark}\label{bounded}
Here we do not need to impose that the number of components of $C$ is bounded as for any stable map $(C, p_1,...,p_n,f)$ there exists an $N$ depending on $g$, $n$ and $d$ such that $C$ has at most $N$ components.
\end{remark}
\begin{remark}\label{equiv} Let us show that Condition 2 in Definition \ref{def} is equivalent to the one in \cite{mop}. Given a stable quotient $\hat{S}\to\mathcal{O}^{\oplus r}$ we get by dualizing a morphism $\mathcal{O}^{\oplus r}\to\hat{S}^{\vee}$ which is generically surjective in all fibers. Conversely, given a morphism $\mathcal{O}^{\oplus r}\to\hat{S}^{\vee}$ which is generically surjective in fibers we get an injective morphism $\hat{S}\to\mathcal{O}^{\oplus r}$ whose quotient is flat by Lemma \ref{geninj}.
\end{remark}

\begin{proposition}We have an isomorphism of stacks $$\bar{P}\simeq \overline{M}_{g,n}(\G(k,r),d)\times_{\mathfrak{Bun}_{g,n}(k,d)} \mathfrak{P}$$ where $N$ in the definition of $\mathfrak{P}$ is a number as in Remark \ref{bounded}. In particular, $\bar{P}$ is an Artin stack.
\end{proposition}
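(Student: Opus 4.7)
The approach is to exhibit the isomorphism by unpacking definitions. Both factors carry natural forgetful morphisms to $\mathfrak{Bun}_{g,n}(k,d)$: the moduli space $\overline{M}_{g,n}(\G(k,r),d)$ sends a stable map $(\mathcal{C},\,\mathcal{O}_{\mathcal{C}}^{\oplus r}\xrightarrow{s}\mathcal{S}^{\vee})$ to the pair $(\mathcal{C},\mathcal{S})$, while $\mathfrak{P}$ sends $(p:\mathcal{C}\to\hat{\mathcal{C}},\mathcal{S},\hat{\mathcal{S}},\rho)$ to the same $(\mathcal{C},\mathcal{S})$ via $\pi_1$. Thus a $B$-point of the fiber product $\overline{M}_{g,n}(\G(k,r),d)\times_{\mathfrak{Bun}_{g,n}(k,d)}\mathfrak{P}$ is the data of a stable map $s$ and a $\mathfrak{P}$-object $(p,\mathcal{S}',\hat{\mathcal{S}},\rho)$ together with a compatible 2-isomorphism identifying their images in $\mathfrak{Bun}_{g,n}(k,d)$, that is, an isomorphism of the underlying pairs $(\mathcal{C},\mathcal{S})$. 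This is exactly the data listed in Definition \ref{def}.

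I will first define a morphism $\Phi:\bar{P}\to\overline{M}_{g,n}(\G(k,r),d)\times_{\mathfrak{Bun}_{g,n}(k,d)}\mathfrak{P}$ by the obvious projections: an object $(p,s,\rho)$ of $\bar{P}(B)$ is sent to the pair $\bigl((\mathcal{C},s),\,(p,\mathcal{S},\hat{\mathcal{S}},\rho)\bigr)$ equipped with the tautological 2-isomorphism in $\mathfrak{Bun}_{g,n}(k,d)$. Conditions (2) of Definition \ref{def} ensures the first component lies in $\overline{M}_{g,n}(\G(k,r),d)$, and conditions (1), (3), (4), (6) ensure the second lies in $\mathfrak{P}$; condition (5) of $\mathfrak{P}$ (the bound on components) is automatic for any fixed genus, marks and degree provided $N$ is taken large enough, as in Remark \ref{bounded}. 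An isomorphism $\psi=(\phi,\hat{\phi},\theta,\hat{\theta})$ in $\bar{P}$ restricts to an isomorphism of stable maps via $(\phi,\theta)$ and to an isomorphism in $\mathfrak{P}$ via $(\phi,\hat{\phi},\theta,\hat{\theta})$, and the two restrictions agree on $(\mathcal{C},\mathcal{S})$, so $\Phi$ is a well-defined morphism of stacks.

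To produce the inverse $\Psi$, take a $B$-point of the fiber product: a stable map $(\mathcal{C},\,s:\mathcal{O}_{\mathcal{C}}^{\oplus r}\to\mathcal{S}^{\vee})$, a $\mathfrak{P}$-object $(\tilde{p}:\tilde{\mathcal{C}}\to\tilde{\hat{\mathcal{C}}},\tilde{\mathcal{S}},\tilde{\hat{\mathcal{S}}},\tilde{\rho})$ and an isomorphism $(\phi,\theta):(\tilde{\mathcal{C}},\tilde{\mathcal{S}})\xrightarrow{\sim}(\mathcal{C},\mathcal{S})$ in $\mathfrak{Bun}_{g,n}(k,d)$. Using $(\phi,\theta)$ to transport $(\tilde{p},\tilde{\hat{\mathcal{S}}},\tilde{\rho})$ to data on $(\mathcal{C},\mathcal{S})$ produces an object $(p,s,\rho)$ of $\bar{P}(B)$; here the invariance of $p$ under contraction of rational tails (Proposition \ref{contr}) ensures that the transported contraction agrees with the canonical one from Lemma \ref{contraction}. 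The conditions of Definition \ref{def} are then checked one-by-one: (1), (3), (4), (6) come directly from the $\mathfrak{P}$-factor; (2) is the given stable map; and the ampleness condition (5) for all $\epsilon>0$ — which is stronger than condition (3) of $\mathfrak{P}$ — follows exactly as in the Remark after the construction of $\mathfrak{P}$, because positivity of $\hat{\mathcal{S}}^{\vee}$ on unstable components of $\hat{\mathcal{C}}$ is a consequence of stability of $s$ on the tails combined with condition (4) on the torsion of $\rho$.

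Finally, $\Phi\circ\Psi$ and $\Psi\circ\Phi$ are identities on both objects and isomorphisms by construction, since transporting along the identity 2-isomorphism in $\mathfrak{Bun}_{g,n}(k,d)$ is the identity. The main subtlety is the matching of conditions — particularly checking that the $\bar{P}$-ampleness on all strictly positive $\epsilon$ is recovered from the weaker $\mathfrak{P}$-condition plus stability — but this is exactly the content of the Remark cited above, so no new work is required. Since fiber products of Artin stacks are Artin, this also proves algebraicity of $\bar{P}$.
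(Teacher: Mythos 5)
Your proof is correct and takes essentially the same route as the paper, whose proof is just the one-liner ``It follows easily from definitions''; you supply the full unpacking, and you correctly identify the one subtlety (that the stronger ampleness-for-all-$\epsilon>0$ condition in $\bar{P}$ is recovered from the $\mathfrak{P}$-side ampleness for $\epsilon>2$, the stable-map condition, and the torsion condition, which is exactly the content of the paper's remark following Construction \ref{funcp}).
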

\begin{proof} It follows easily from definitions.
\end{proof}
\begin{lemma} We have a morphism of stacks $$R:\overline{M}_{g,n}(\G(k,r),d)^{bal}\to \bar{P}$$ over $\mathfrak{Bun}_{g,n}(k,d)$.
\end{lemma}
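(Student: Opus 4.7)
The plan is to realize $R$ via the isomorphism $\bar{P}\simeq \overline{M}_{g,n}(\G(k,r),d)\times_{\mathfrak{Bun}_{g,n}(k,d)} \mathfrak{P}$ established in the previous proposition, applied to the two natural maps out of $\overline{M}_{g,n}(\G(k,r),d)^{bal}$: the inclusion into $\overline{M}_{g,n}(\G(k,r),d)$, and the composition of the forgetful morphism $\overline{M}_{g,n}(\G(k,r),d)^{bal}\to \mathfrak{Bun}^{bal}_{g,n}(k,d)$ with the previously constructed $r:\mathfrak{Bun}^{bal}_{g,n}(k,d)\to\mathfrak{P}$. These two maps agree after projection to $\mathfrak{Bun}_{g,n}(k,d)$, because $r$ preserves the underlying bundle $\mathcal{S}$, so the universal property of the fibered product produces the desired morphism $R$ to $\bar{P}$; by construction it lies over $\mathfrak{Bun}_{g,n}(k,d)$.

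Unwinding, on $T$-points $R$ sends a balanced stable map $(C,p_1,\ldots,p_n,\, s:\mathcal{O}_C^{\oplus r}\to \mathcal{S}^\vee)$ to the tuple $(p:\mathcal{C}\to\hat{\mathcal{C}},\, s,\, \rho:p_*\mathcal{S}^\vee\to \hat{\mathcal{S}}^\vee)$, where $p$ is the contraction of Lemma \ref{contraction}, $\hat{\mathcal{S}}^\vee=p_*(\mathcal{S}^\vee(a_iD_i))$ with $a_i$, $D_i$ defined as in the construction of $r$ (using the balanced splitting type on each rational tail), and $\rho$ is induced by multiplication by $D_i^{a_i}$. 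Conditions 1, 2, 3, 4, 6 of Definition \ref{def} are then built into the definitions of $\mathfrak{P}$ and of stable maps, so the only nontrivial check is condition 5: ampleness of $\omega(\sum p_i)\otimes \wedge^k\hat{S}^\epsilon$ on $\hat{C}$ for every $\epsilon>0$, not merely for $\epsilon>2$. This is precisely the content of the Remark following Construction \ref{funcp}: stability of the map makes $S^\vee$ of positive degree on every rational tail of $C$, and the length condition 6 forces $\hat{S}^\vee$ to have positive degree on every unstable component of $\hat{C}$, upgrading the $\epsilon>2$ ampleness coming from $\mathfrak{P}$ to ampleness for all $\epsilon>0$.

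The main obstacle I anticipate is the bookkeeping around functoriality of $\rho$, i.e.\ verifying that the twist by $D_i^{a_i}$ is compatible both with isomorphisms $\phi$ of the source curve and with arbitrary base change $T'\to T$. Compatibility with $\phi$ is automatic since $\phi$ permutes rational tails and preserves the splitting type of $\mathcal{S}$, hence preserves the integers $a_i$; base-change compatibility is Lemma \ref{cbpopa} applied to $\mathcal{S}^\vee(a_iD_i)$, whose first higher direct image vanishes on rational tails by the very choice of $a_i$, so that $\hat{\mathcal{S}}^\vee$ and $\rho$ are preserved under pullback. These are the checks that promote the pointwise definition of $R$ to a morphism of stacks fitting over $\mathfrak{Bun}_{g,n}(k,d)$.
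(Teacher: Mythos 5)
Your proof is correct, and it works because you correctly identify that the heavy lifting was already done: the isomorphism $\bar{P}\simeq \overline{M}_{g,n}(\G(k,r),d)\times_{\mathfrak{Bun}_{g,n}(k,d)} \mathfrak{P}$ reduces the construction of $R$ to exhibiting a pair of morphisms that agree over $\mathfrak{Bun}_{g,n}(k,d)$, and the map $r:\mathfrak{Bun}^{bal}_{g,n}(k,d)\to\mathfrak{P}$ supplies the second leg. The compatibility check $\pi_1\circ r = \mathrm{id}$ on $\mathfrak{Bun}^{bal}$ is exactly what you state, so the universal property of the fibered product applies.

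The paper takes a more pedestrian route: it writes down the tuple
$(\mathcal{O}^r\to\mathcal{S}^\vee,\; p_*\mathcal{S}^\vee\stackrel{\rho}{\rightarrow}(p_*\mathcal{S}(-a_iD_i))^\vee)$
directly and then gives a self-contained argument that $p_*\mathcal{S}(-a_iD_i)$ is a vector bundle. Interestingly, that argument actually exploits the stable-map structure: one composes $\mathcal{S}(-a_iD_i)\hookrightarrow\mathcal{S}\hookrightarrow\mathcal{O}^{\oplus r}$, applies Lemma \ref{geninj} to deduce that the quotient $\mathcal{Q}'$ is flat, and then uses $R^1p_*\mathcal{S}(-a_iD_i)=0$ together with the exact sequence $0\to p_*\mathcal{S}(-a_iD_i)\to\mathcal{O}^{\oplus r}\to p_*\mathcal{Q}'\to 0$ to see that the pushforward is locally free. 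This gives an independent, more concrete verification that avoids citing Lemma 1.7 of \cite{mihnea} a second time. Your route reuses that citation through $r$ and is therefore shorter; the paper's route buys an argument that is self-contained within the stable-map framework. Both are valid.

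One small remark: once you invoke the universal property of the fibered product and the proposition $\bar{P}\simeq \overline{M}\times_{\mathfrak{Bun}}\mathfrak{P}$, all six conditions of Definition \ref{def} — including the $\epsilon>0$ ampleness — are automatic, since every object of the fibered product already lies in $\bar{P}$. Treating condition 5 as a residual nontrivial check is therefore not necessary in your approach; that verification belongs to the proof of the isomorphism itself (and is handled by the remark after Construction \ref{funcp}, as you note). This is a minor redundancy, not an error.
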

\begin{proof} Let $(\mathcal{O}^r\to\mathcal{S}^{\vee})\in \overline{M}_{g,n}(\G(k,r),d)^{bal}(B)$. We define $$R(\mathcal{O}^r\to\mathcal{S}^{\vee})=(\mathcal{O}^r\to\mathcal{S}^{\vee},p_*\mathcal{S}^{\vee}\stackrel{\rho}{\rightarrow} (p_*\mathcal{S}(-a_iD_i))^{\vee}).$$ Let us show that the map is well defined. Let us consider the composition $0\to \mathcal{S}(-a_iD_i)\to \mathcal{S}\to\mathcal{O}^{\oplus r}$ and let $\mathcal{Q}^{\prime}$ be the quotient. By Lemma \ref{geninj} we have that $\mathcal{Q}^{\prime}$ is flat over $B$. We have that $R^1p_*\mathcal{S}(-a_iD_i)=0$ which implies that the sequence
\begin{equation*}
0\to p_*\mathcal{S}(-aD)\to \mathcal{O}^{\oplus r}\to p_*\mathcal{Q}^{\prime}\to0
\end{equation*}
is exact. This shows that $p_*\mathcal{S}(-a_iD_i)$ is a vector bundle.
\end{proof}
\begin{example}\label{popa} Let us prove that in general $R$ does not extend. We repeat the argument in \cite{mihnea}, Theorem 7.4. Let us consider a 1-dimensional constant family of stable genus zero maps $f^1:\mathcal{C}^1_B\to\G(k,r)$ with a constant section $s^1:B\to\mathcal{C}^1$, $s^1(b)=P$, for any point $b\in B$. Let $f^0:\mathcal{C}_B^0\to \G(k,r)$ be a family of genus zero maps, such that on the general fiber $C_b\to\G(k,r)$ the pull-back of the tautological subbundle is isomorphic to $\mathcal{O}(-1)\oplus...\oplus\mathcal{O}(-1)$ and on the special fiber $C^0_0\to\G(k,r)$ the pull-back of the tautological subbundle is isomorphic to $\mathcal{O}\oplus\mathcal{O}(-2)\oplus\mathcal{O}(-1)...\oplus\mathcal{O}(-1)$. Let us consider a section $s^0:B\to\mathcal{C}^0$ such that $f^1(s^1(B))=f^0(s^0(B))$. By identifying $f^1(s^1(B))$ with $f^0(s^0(B))$ we obtain a family of stable maps $f:\mathcal{C}\to\G(k,n)$. Let $f_0:C_0\to\G(k,r)$ be the special fiber and $p$ the gluing point of $C_0^1$ with $C_0^0$. Let  $\{s_1 , . . . , s_k \}$ be a local basis for $S|_{C_0^1}$ at $p$. If $x$ is a local coordinate around $p$, then $\{xs_1,..., xs_k\}$ is a basis of $\hat{S}$.
\\ Let us now consider a second family of stable maps. As $\overline{M}_{0,n}(\G(k,r),d)$ is irreducible we can find a family of stable maps $B^{\prime}$ whose special fiber is $f_0:C_0\to\G(k,r)$ and general fiber with smooth domain. By proposition 7.3 in \cite{mihnea} the limiting stable quotient over $B^{\prime}$ is $\hat{S}$ with $\hat{S}$ around $p$ generated by $\{s_1,x^2s_2,xs_3..., xs_k\}$ As the two quotients are different $R$ does not extend.
\end{example}
\begin{proposition} We have a morphisms of stacks $$i:\bar{P}\to\overline{M}_{g,n}(\G(k,r),d)\times_{\mathfrak{M}^{rtf}_{g,n}}\overline{Q}_{g,n}(\G(k,r),d)$$ which is an immersion of DM stacks. In particular, we have morphisms of stacks
\begin{align*}
&c_1:\bar{P}\to\overline{M}_{g,n}(\G(k,r),d)\\
&c_2:\bar{P}\to\overline{Q}_{g,n}(\G(k,r),d).
\end{align*}
\end{proposition}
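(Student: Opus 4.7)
The plan is to build the morphism $i$ by assembling two natural projections $c_1, c_2$ from $\bar{P}$, check they agree over $\mathfrak{M}^{rtf}_{g,n}$, and then exhibit $\bar{P}$ as a locally closed substack of the fiber product.

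The projection $c_1$ sends an object $(p:\mathcal{C}\to\hat{\mathcal{C}}, s:\mathcal{O}^{\oplus r}\to\mathcal{S}^{\vee}, \rho:p_*\mathcal{S}^{\vee}\to\hat{\mathcal{S}}^{\vee})$ of $\bar{P}$ to the underlying stable map $(\mathcal{C}, p_1,\dots,p_n, s)$, which is stable by Condition 2 of Definition \ref{def}. For $c_2$, I form the composition
\begin{equation*}
\mathcal{O}^{\oplus r}_{\hat{\mathcal{C}}}=p_*\mathcal{O}^{\oplus r}_{\mathcal{C}}\xrightarrow{p_*s} p_*\mathcal{S}^{\vee}\xrightarrow{\rho}\hat{\mathcal{S}}^{\vee}
\end{equation*}
(using $p_*\mathcal{O}_{\mathcal{C}}=\mathcal{O}_{\hat{\mathcal{C}}}$, since $p$ contracts connected trees of rational curves) and dualize to get $\hat{\mathcal{S}}\to\mathcal{O}^{\oplus r}$. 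To verify this is a stable quotient: the composition is surjective on $\mathcal{U}$ because $s|_{\mathcal{U}}$ is surjective and $\rho|_{\mathcal{U}}$ is an isomorphism (Condition 4 of Construction \ref{funcp}), so $\hat{\mathcal{S}}\to\mathcal{O}^{\oplus r}$ is injective at every generic point of every fiber, and Lemma \ref{geninj} gives $B$-flatness of the cokernel; the torsion of the cokernel is supported on the images of rational tails under $p$, which are smooth unmarked points of $\hat{\mathcal{C}}$ (so quasi-stability holds); and the ampleness required of a stable quotient is exactly Condition 5 of Definition \ref{def}. Finally, both $c_1$ and $c_2$ induce the same morphism to $\mathfrak{M}^{rtf}_{g,n}$, since by construction of $p$ via Lemma \ref{contraction} the contraction of $\mathcal{C}$ is canonically $\hat{\mathcal{C}}$; this yields the lift $i$.

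The main obstacle is the immersion claim. Given a $B$-point of the fiber product, that is, a compatible pair $(\mathcal{C}, s)$, $(\hat{\mathcal{C}}, q:\hat{\mathcal{S}}\to\mathcal{O}^{\oplus r})$, I would show that lifting to $\bar{P}$ is a locally closed condition with a unique lift when it exists. The compatibility $\rho\circ p_*s=q^{\vee}$ with $\rho|_{\mathcal{U}}$ an isomorphism forces $s|_{\mathcal{U}}$ and $q^{\vee}|_{\mathcal{U}}$ to cut out the same rank-$k$ quotient of $\mathcal{O}^{\oplus r}|_{\mathcal{U}}$, which is a closed condition on $B$. When this holds, $\rho|_{\mathcal{U}}$ is the uniquely induced isomorphism, and it extends uniquely to all of $\hat{\mathcal{C}}$ because $\hat{\mathcal{S}}^{\vee}$ is torsion-free and $\mathcal{U}$ is fiberwise dense, so two morphisms to $\hat{\mathcal{S}}^{\vee}$ agreeing on $\mathcal{U}$ must coincide. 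The remaining conditions -- that the torsion of $\mathrm{coker}(q)$ is supported on the images of the rational tails, and that $\mathrm{length}(\tau_P)>0$ at those points (Condition 4 of Construction \ref{funcp}) -- are respectively closed and open. Together, these cut out $\bar{P}$ inside $\overline{M}_{g,n}(\G(k,r),d)\times_{\mathfrak{M}^{rtf}_{g,n}}\overline{Q}_{g,n}(\G(k,r),d)$ as a locally closed substack, so $i$ is an immersion of DM stacks, and the projections $c_1,c_2$ follow by composition with the two projections from the fiber product.
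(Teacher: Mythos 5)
Your proof is correct and follows essentially the same route as the paper: construct $c_2$ by composing $\mathcal{O}^{\oplus r}\to p_*\mathcal{S}^{\vee}\xrightarrow{\rho}\hat{\mathcal{S}}^{\vee}$ (using Remark \ref{equiv} to convert to a stable quotient), and establish the immersion by showing the lift of a pair $(m,q)$ to $\bar{P}$ is unique, via the fact that $\rho$ is determined on the image of $p_*s$ together with torsion-freeness of $\hat{\mathcal{S}}^{\vee}$. You supply somewhat more detail than the paper on quasi-stability and on the locally-closed nature of the conditions; the only slips are cosmetic citation mismatches (e.g.\ the condition ``$\rho|_{\mathcal{U}}$ is an isomorphism'' is Condition 4 of Definition \ref{def}, not of Construction \ref{funcp}).
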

\begin{proof} The exact sequence $0\to \mathcal{Q}^{\vee}\to\mathcal{O}^r\to \mathcal{S}^{\vee}\to0$ induces a an exact sequence $$0\to p_*\mathcal{Q}^{\vee}\to\mathcal{O}^r\to p_*(\mathcal{S}^{\vee})\to R^1p_*\mathcal{Q}^{\vee}.$$ Let us show that $R^1p_*\mathcal{Q}^{\vee}$ is supported on the complement of $U$ in $\hat{\mathcal{C}}$. We have that $R^1p_*\mathcal{Q}^{\vee}|_U=R^1p_*(\mathcal{Q}^{\vee}|_U)$ and since $p$ is the identity on $U$ we obtain that $R^1p_*\mathcal{Q}^{\vee}|_U=0$. Composing the generically surjective morphism $\mathcal{O}^r\to p_*(\mathcal{S}^{\vee})$ with $p_*(\mathcal{S}^{\vee})\to\hat{\mathcal{S}^{\vee}}$ we obtain an element in $\overline{Q}_{g,n}(\G(k,r),d)$.
\\Let us now show that given a stable map $m=(C, \mathcal{O}^r\stackrel{s}{\rightarrow} \mathcal{S}^{\vee}\to 0)$ and a stable quotient $q=(\hat{C}, \mathcal{O}^r\stackrel{\hat{s}}{\rightarrow} \hat{\mathcal{S}}^{\vee})$ there exists at most one $mq=(p:\mathcal{C}\to\hat{\mathcal{C}}, \mathcal{O}_{\mathcal{C}}^{\oplus r}\stackrel{s}{\rightarrow}\mathcal{S}^{\vee}, p_*(\mathcal{S}^{\vee})\stackrel{\rho}{\rightarrow} \hat{\mathcal{S}}^{\vee})$ such that $i(mq)=(m,q)$. As the map $p_*s:p_*\mathcal{O}^r\stackrel{}{\rightarrow}p_* \mathcal{S}^{\vee}$ induced by $s$ is surjective away from torsion and zero on the torsion we obtain that a map $\rho: p_* \mathcal{S}^{\vee}\to\hat{\mathcal{S}}^{\vee}$ such that $\rho\circ p_*s=\hat{s}$ must be unique.
\end{proof}
\begin{corollary} $\bar{P}$ is separated.
\end{corollary}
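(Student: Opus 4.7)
The plan is to apply the valuative criterion of separatedness, using the immersion $i:\bar{P}\to\overline{M}_{g,n}(\G(k,r),d)\times_{\mathfrak{M}^{rtf}_{g,n}}\overline{Q}_{g,n}(\G(k,r),d)$ just constructed, combined with the fact that $\overline{M}_{g,n}(\G(k,r),d)$ and $\overline{Q}_{g,n}(\G(k,r),d)$ are proper DM stacks, and hence separated.

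Concretely, let $R$ be a DVR with fraction field $K$, and let $\alpha,\beta:\mathrm{Spec}(R)\to\bar{P}$ be two morphisms endowed with a $2$-isomorphism over $\mathrm{Spec}(K)$. Composing with $c_1$ and $c_2$ produces isomorphisms over $K$ of the underlying stable map and stable quotient data; by separatedness of $\overline{M}_{g,n}(\G(k,r),d)$ and $\overline{Q}_{g,n}(\G(k,r),d)$, these extend uniquely to isomorphisms over $\mathrm{Spec}(R)$. This produces the components $\phi$, $\hat{\phi}$, $\theta$, $\hat{\theta}$ of a candidate isomorphism in $\bar{P}$ in the sense of Definition \ref{def}.

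It then remains to verify the compatibility conditions in Definition \ref{def}: that $p'\circ\phi=\hat{\phi}\circ p$, and that the square relating $\rho$ and $\rho'$ commutes. Both are identities of morphisms between $R$-flat objects (curves with separated targets, and coherent sheaves on a flat family) whose restrictions to $\mathrm{Spec}(K)$ agree by hypothesis; since the generic fibre of $\mathrm{Spec}(R)$ is schematically dense and the targets are flat over $R$, each identity propagates to all of $\mathrm{Spec}(R)$. In particular, the rigidity of $\rho$ established in the previous proposition, namely the uniqueness of $\rho$ given the underlying stable map and stable quotient, is precisely what ensures that the isomorphism in the fibered product lifts uniquely to an isomorphism in $\bar{P}$.

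The main technical subtlety I anticipate lies in the compatibility of contracted curves, since $\mathfrak{M}^{rtf}_{g,n}$ itself is not separated. This is handled by the observation that the contraction of the extended stable-map isomorphism agrees generically with the extended stable-quotient isomorphism, so the flatness argument above forces them to agree over $\mathrm{Spec}(R)$ without having to invoke separatedness of $\mathfrak{M}^{rtf}_{g,n}$.
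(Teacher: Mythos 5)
Your proof is correct and follows essentially the same route as the paper intends: the key point is the immersion $i:\bar{P}\to\overline{M}_{g,n}(\G(k,r),d)\times_{\mathfrak{M}^{rtf}_{g,n}}\overline{Q}_{g,n}(\G(k,r),d)$ established in the preceding proposition, together with separatedness of $\overline{M}_{g,n}(\G(k,r),d)$ and $\overline{Q}_{g,n}(\G(k,r),d)$. The paper leaves the corollary without proof because the abstract form of the argument is one line — the fibred product is a monomorphism into the separated stack $\overline{M}_{g,n}(\G(k,r),d)\times\overline{Q}_{g,n}(\G(k,r),d)$, so it is separated, and a (locally closed) substack of a separated stack is separated; you instead unpack this via the valuative criterion, which amounts to the same thing. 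Your closing remark about $\mathfrak{M}^{rtf}_{g,n}$ not being separated is a worthwhile flag, and your resolution is correct: one never needs separatedness of the base of the fibred product, only of the two factors, which is exactly what the monomorphism-into-product argument shows and what your flatness/schematic-density step accomplishes concretely.
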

\begin{construction} Let $$\overline{MQ}_{g,n}(\G(k,r),d)=\overline{M}_{g,n}(\G(k,r),d)\times_{\mathfrak{Bun}_{g,n}(k,d)} \widetilde{\mathfrak{Bun}}_{g,n}(k,d).$$ Then by construction $\overline{MQ}_{g,n}(\G(k,r),d)$ is a substack of $\bar{P}$ and it contains the image of $\bar{R}$.
\end{construction}
\subsection{Properties}
Let us first show that $\bar{P}$ is proper.
\paragraph{Properness} 
\begin{lemma}\label{tor} Let $\mathcal{C}\to B$ be a flat family of curves. Let us assume that $B$, $\mathcal{C}$ are smooth and let $D$ be a $-1$ curve on $\mathcal{C}$. Then we have that $$Tor_1(\mathcal{O}_D(-a),\mathcal{O}_D)\simeq\mathcal{O}_D(-a+1)$$ for any $a>0$. 
\end{lemma}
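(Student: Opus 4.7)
The plan is to compute the Tor directly using the standard locally free resolution of $\mathcal{O}_D$ as the structure sheaf of an effective Cartier divisor, and to exploit the fact that the defining equation of $D$ vanishes identically on $D$.

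First, since $\mathcal{C}$ is smooth (so in particular regular) and $D$ is an effective Cartier divisor, we have the short exact sequence
\begin{equation*}
0\to \mathcal{O}_{\mathcal{C}}(-D)\stackrel{\cdot s_D}{\longrightarrow}\mathcal{O}_{\mathcal{C}}\to \mathcal{O}_D\to 0,
\end{equation*}
where $s_D$ is the canonical section cutting out $D$. This is a length-one locally free resolution of $\mathcal{O}_D$ as an $\mathcal{O}_{\mathcal{C}}$-module, so it may be used to compute $\mathrm{Tor}_\bullet^{\mathcal{O}_{\mathcal{C}}}(-, \mathcal{O}_D)$.

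Next, I would tensor this resolution with $\mathcal{O}_D(-a)$ (viewed as an $\mathcal{O}_{\mathcal{C}}$-module via the closed immersion $D\hookrightarrow \mathcal{C}$). The resulting two-term complex is
\begin{equation*}
\mathcal{O}_{\mathcal{C}}(-D)\otimes_{\mathcal{O}_{\mathcal{C}}}\mathcal{O}_D(-a)\stackrel{\cdot s_D}{\longrightarrow} \mathcal{O}_D(-a),
\end{equation*}
and its cohomology computes $\mathrm{Tor}_\bullet(\mathcal{O}_D(-a),\mathcal{O}_D)$. The key observation is that the differential, being multiplication by $s_D$, is identically zero after restriction to $D$, since $s_D$ vanishes on $D$. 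Consequently
\begin{equation*}
\mathrm{Tor}_1(\mathcal{O}_D(-a),\mathcal{O}_D)\;\simeq\;\mathcal{O}_{\mathcal{C}}(-D)\otimes_{\mathcal{O}_{\mathcal{C}}}\mathcal{O}_D(-a)\;\simeq\;\mathcal{O}_{\mathcal{C}}(-D)|_D\otimes_{\mathcal{O}_D}\mathcal{O}_D(-a).
\end{equation*}

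Finally, I would identify $\mathcal{O}_{\mathcal{C}}(-D)|_D$ with the conormal bundle of $D$ in $\mathcal{C}$. Since $D$ is a $(-1)$-curve, its self-intersection on the smooth surface-like locus of $\mathcal{C}$ is $-1$, so $\deg\bigl(\mathcal{O}_{\mathcal{C}}(D)|_D\bigr)=-1$, giving $\mathcal{O}_{\mathcal{C}}(D)|_D\simeq \mathcal{O}_D(-1)$ and thus $\mathcal{O}_{\mathcal{C}}(-D)|_D\simeq \mathcal{O}_D(1)$. Substituting,
\begin{equation*}
\mathrm{Tor}_1(\mathcal{O}_D(-a),\mathcal{O}_D)\;\simeq\; \mathcal{O}_D(1)\otimes \mathcal{O}_D(-a)\;\simeq\; \mathcal{O}_D(-a+1),
\end{equation*}
as claimed. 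The only nontrivial ingredient is the computation of the conormal bundle of a $(-1)$-curve; the rest is bookkeeping with the Koszul-type resolution, so there is no real obstacle. The hypothesis $a>0$ is not used in the computation itself and serves only to ensure that $\mathcal{O}_D(-a)$ has the range relevant for the subsequent applications.
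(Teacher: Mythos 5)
Your proof is correct and follows essentially the same approach as the paper: both start from the Koszul resolution $0\to\mathcal{O}_{\mathcal{C}}(-D)\to\mathcal{O}_{\mathcal{C}}\to\mathcal{O}_D\to 0$, use the vanishing of multiplication by $s_D$ after restriction to $D$, and invoke $D^2=-1$ to identify $\mathcal{O}_{\mathcal{C}}(\pm D)|_D$ with $\mathcal{O}_D(\mp 1)$. The only cosmetic difference is that the paper first twists the resolution by $\mathcal{O}(aD)$ to obtain a resolution of $\mathcal{O}_D(-a)$ and then tensors with $\mathcal{O}_D$, whereas you resolve $\mathcal{O}_D$ and tensor with $\mathcal{O}_D(-a)$ — equivalent by the symmetry of $\mathrm{Tor}$ — and you correctly observe that the hypothesis $a>0$ is not actually used.
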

\begin{proof} We have an exact sequence
\begin{equation*}
0\to \mathcal{O}(-D)\to \mathcal{O}\to\mathcal{O}_D\to0.
\end{equation*}
Let $a>0$. Tensoring it with $\mathcal{O}(aD)$ we obtain the following free resolution of $\mathcal{O}_D(-a)$
\begin{equation*}
0\to \mathcal{O}((a-1)D)\to \mathcal{O}(aD)\to\mathcal{O}_D(-a)\to0.
\end{equation*}
Tensoring the resolution with $\mathcal{O}_D$ we obtain a sequence
 \begin{equation*}
\mathcal{O}_D((-a+1)D)\to \mathcal{O}_D(-aD)\to\mathcal{O}_D(-a)\to0.
\end{equation*}
This shows that $Tor_1(\mathcal{O}_D(-a),\mathcal{O}_D)=\mathcal{O}_D(-a+1)$.
\end{proof}
\begin{lemma} \label{nattrans}Let $\mathcal{C}\to B$ a family of curves as before, $D$ be a $(-1)$-curve and $$0\to\mathcal{S}\to\mathcal{O}_{\mathcal{C}}^{\oplus n}\stackrel{q}{\rightarrow} Q\to0$$ a flat quotient on $\mathcal{C}$. Let $q:\mathcal{C}\to\mathcal{C}^{\prime}$ be the contraction of $D$. Then there exists a morphism of sheaves $\mathcal{S}^{\prime}\to\mathcal{S}$ on $\mathcal{C}^{\prime}$ which is an isomorphism on $\mathcal{C}\backslash D$.
\end{lemma}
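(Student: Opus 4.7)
The plan is to construct $\mathcal{S}'$ on $\mathcal{C}'$ as the pushforward $\mathcal{S}' := q_*(\mathcal{S}(-aD))$ for a suitably chosen integer $a \geq 0$, and to obtain the desired morphism by pushing forward the natural inclusion $\mathcal{S}(-aD) \hookrightarrow \mathcal{S}$ given by multiplication by the $a$-th power of the canonical section of $\mathcal{O}_{\mathcal{C}}(D)$. Under the identification $\mathcal{C}\setminus D \cong \mathcal{C}'\setminus\{q(D)\}$, the claimed isomorphism will come essentially for free from the fact that $\mathcal{O}(D)$ is trivial on $\mathcal{C}\setminus D$.

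First I would fix the integer $a$. Restricting $\mathcal{S}$ to the $(-1)$-curve $D$ (a copy of $\mathbb{P}^1$ in each relevant fibre) yields a splitting $\mathcal{S}|_D \cong \bigoplus_{i=1}^k \mathcal{O}_D(-b_i)$ with $b_i \geq 0$, since $\mathcal{S}$ is a subsheaf of the trivial bundle. Using $\mathcal{O}_{\mathcal{C}}(-aD)|_D = \mathcal{O}_D(a)$ together with the free resolution $0\to\mathcal{O}((a-1)D)\to\mathcal{O}(aD)\to\mathcal{O}_D(-a)\to 0$ appearing in the proof of Lemma \ref{tor}, I would choose $a$ large enough so that every summand of $\mathcal{S}(-aD)|_D$ has non-negative degree, which forces $R^1q_*(\mathcal{S}(-aD))=0$. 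By Lemma \ref{cbpopa} it then follows that $\mathcal{S}' = q_*(\mathcal{S}(-aD))$ is a flat family of coherent sheaves on $\mathcal{C}' \to B$ whose formation commutes with arbitrary base change on $B$.

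Next, the inclusion $\mathcal{S}(-aD) \hookrightarrow \mathcal{S}$ is defined globally on $\mathcal{C}$ and is an isomorphism on the open set $\mathcal{C}\setminus D$, since there $\mathcal{O}(D)$ is trivial. Applying $q_*$ produces a canonical morphism $\mathcal{S}' \to q_*\mathcal{S}$ of sheaves on $\mathcal{C}'$; restricted to $\mathcal{C}'\setminus\{q(D)\} \cong \mathcal{C}\setminus D$ it is the identity on $\mathcal{S}|_{\mathcal{C}\setminus D}$, hence an isomorphism there, which is the required conclusion. (Equivalently, by adjunction this is the datum of a morphism $q^*\mathcal{S}' \to \mathcal{S}$ on $\mathcal{C}$ with the same isomorphism property on $\mathcal{C}\setminus D$.)

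The main obstacle I anticipate is controlling the choice of $a$ in families: the splitting type $(b_1,\dots,b_k)$ of $\mathcal{S}|_D$ may jump at special fibres of $B$, so a single $a$ must dominate the worst fibrewise splitting. Here I would combine semicontinuity of the Harder--Narasimhan type of $\mathcal{S}|_D$ with the Tor computation of Lemma \ref{tor} applied to the resolution above to bound the required twist uniformly over $B$, verify that $R^1q_*(\mathcal{S}(-aD))$ vanishes globally (not merely fibrewise), and then invoke Lemma \ref{cbpopa} to conclude that the constructed $\mathcal{S}'$ and the morphism $\mathcal{S}'\to q_*\mathcal{S}$ behave well under any base change $B'\to B$.
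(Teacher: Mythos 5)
Your construction $\mathcal{S}' = q_*(\mathcal{S}(-aD))$ does give a morphism to $q_*\mathcal{S}$ that is an isomorphism away from $q(D)$, so it satisfies the literal wording, but it misses what the lemma is actually used for. The paper's proof produces a bundle $\mathcal{T}$ on $\mathcal{C}$ with $\mathcal{T}|_D \cong \mathcal{O}_D^{\oplus k}$ \emph{trivial}, so that $\mathcal{S}' = q_*\mathcal{T}$ is locally free on $\mathcal{C}'$; the Remark immediately after the lemma speaks of "the vector bundle constructed in the above lemma" (and feeds it into Lemma \ref{geninj}), and the properness proof later invokes "$\mathcal{T}|_D$ is trivial" to conclude $p''_*(\mathcal{T}^\vee)$ is a vector bundle. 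Your $\mathcal{S}'$ has neither property in general.

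Concretely, write $\mathcal{S}|_D \cong \bigoplus_i \mathcal{O}_D(-b_i)$. Since $\mathcal{O}_{\mathcal{C}}(-aD)|_D \cong \mathcal{O}_D(a)$, you get $\mathcal{S}(-aD)|_D \cong \bigoplus_i \mathcal{O}_D(a-b_i)$; to kill all $H^1$ you must take $a \geq \max_i b_i$, and then the summands $\mathcal{O}_D(a-b_i)$ of strictly positive degree (present whenever the $b_i$ are not all equal) prevent $q_*(\mathcal{S}(-aD))$ from being locally free at $q(D)$ — the model case is $q_*\mathcal{O}_{\mathcal{C}}(-D) = I_{q(D)}$, already not locally free. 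The criterion is not $R^1q_* = 0$ but triviality of the restriction to $D$, and a uniform twist by $-aD$ moves every summand by the same amount, so it cannot balance an unbalanced $\mathcal{S}|_D$. The paper instead performs iterated elementary transformations, taking the kernel of the projection onto the most negative summand $\mathcal{O}_D(-a_k)$; the $\mathrm{Tor}_1$ computation of Lemma \ref{tor} shows each step keeps the restriction to $D$ of nonpositive degree while strictly decreasing $\sum a_i$, so the process terminates in $\mathcal{T}|_D \cong \mathcal{O}_D^{\oplus k}$. (As an aside, your final worry about splitting types jumping over $B$ does not arise: $D$ is a $(-1)$-curve in the surface $\mathcal{C}$, hence contained in a single fiber, so there is no family of restrictions $\mathcal{S}|_D$ to control.)
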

\begin{proof} Let $S|_D\simeq \mathcal{O}(-a_1)\oplus...\oplus \mathcal{O}(-a_k)$ with $a_i\geq 0$ and let us assume that $a_1\leq...\leq a_k$. Let us prove that there exists a vector bundle $\mathcal{T}$ on $\mathcal{C}$ such that the following conditions are fulfilled 
\begin{enumerate}
\item we have a morphism $\mathcal{T}\to\mathcal{S}$ which is an isomorphism on $\mathcal{C}\backslash D$
\item $\mathcal{T}|_D\simeq \mathcal{O}^{\oplus k}$ 
\end{enumerate}
Let $\mathcal{T}_1$ be the kernel of the natural projection $\mathcal{S}\to \mathcal{O}_D(-a_k)$ so that we have an exact sequence
\begin{equation*}
0\to \mathcal{T}_1\to\mathcal{S}\to \mathcal{O}_D(-a_k)\to 0.
\end{equation*} Restricting the above sequence to $D$ we have an exact sequence
\begin{equation}
0\to Tor_1(\mathcal{O}_D(-a_k),\mathcal{O}_D)\to \mathcal{T}_1\to\mathcal{S}\to \mathcal{O}_D(-a_k)\to 0.
\end{equation}
By Lemma \ref{tor} we have that $Tor_1(\mathcal{O}_D(-a_k),\mathcal{O}_D)\simeq \mathcal{O}_D(-a+1)$ which shows that we have an exact sequence 
\begin{equation}\label{induction}0\to\mathcal{O}_D(-a_k+1)\to\mathcal{T}_1\to \mathcal{O}(-a_1)\oplus...\oplus \mathcal{O}(-a_{k-1})\to0.
\end{equation} Let us assume that $\mathcal{T}_1|_D\simeq \mathcal{O}(-b_1)\oplus...\oplus \mathcal{O}(-b_k)$ with $b_1\leq...\leq b_k$. By sequence (\ref{induction}) we have that $b_i\geq0$ and $\sum_{i=1}^kb_i<\sum_{i=1}^ka_i$. Repeating the above procedure for $\mathcal{T}_1$ instead of $\mathcal{S}$ we obtain $\mathcal{T}$ as in the above claim.
\\Let $\mathcal{S}^{\prime}=q_*\mathcal{T}$. We have that $\mathcal{S}^{\prime}$ is a vector bundle over $\mathcal{C}^{\prime}$. By construction $q_*\mathcal{T}\to q_*\mathcal{S}$ is an isomorphism on $U$.
\end{proof}
\begin{remark} Let $$0\to\mathcal{S}\to\mathcal{O}_{\mathcal{C}}^{\oplus n}\stackrel{q}{\rightarrow} Q\to0$$ be a flat quotient on $\mathcal{C}$. If $\mathcal{S}^{\prime}$ is the vector bundle constructed in the above lemma, then we obtain a flat family of quotients over $\mathcal{C}^{\prime}$. Indeed we have that the composition $\mathcal{S}^{\prime}\to q_*\mathcal{S}\to \mathcal{O}^{\oplus r}$ in injective on all fibers of $\mathcal{C}\to B$. By Lemma \ref{geninj} we have that $\mathcal{S}^{\prime}\to\mathcal{O}^{\oplus r}\to \mathcal{Q}^{\prime}$ is a flat family of quotients over $\mathcal{C}^{\prime}$.
\end{remark}

\textit{Proof of Properness.} Let $(p:\mathcal{C}^*\to\hat{\mathcal{C}}^*,\mathcal{S}^*\stackrel{j^*}{\rightarrow} \mathcal{O}_{\mathcal{C}^*}^{\oplus r}, p_*({\mathcal{S}^*}^{\vee})\stackrel{\rho^*}{\rightarrow} (\hat{\mathcal{S}}^*)^{\vee})$ be a family of m\&q's over $\Delta^*$. By normalizing and possibly restricting $\Delta$ we may assume that $\hat{\mathcal{C}}^*$  is smooth. By the properness of the moduli space of stable maps we can extend $0\to \mathcal{S}^*\stackrel{j^*}{\rightarrow} \mathcal{O}_{\mathcal{C}^*}^{\oplus r}\to Q^*\to 0$ to $$0\to\mathcal{S}\stackrel{j}{\rightarrow} \mathcal{O}_{\mathcal{C}}^{\oplus r}\to Q\to0$$ with $Q$ flat over $\Delta$. Let $\mathcal{C}_0$ be (reducible) 2-dimensional component of $\mathcal{C}$ contracted by $p$. Let $\mathcal{C}^{\prime}$ be the closure of the complement of $\mathcal{C}_0$ in $\mathcal{C}$, $p^{\prime}:\mathcal{C}\to\mathcal{C}^{\prime}$ be the projection to $\mathcal{C}^{\prime}$ and $\mathcal{S}^{\prime}$ the restriction of $\mathcal{S}$ to $\mathcal{C}^{\prime}$. By the semistable reduction theorem and by possibly blowing up the nodes of the central fiber we may assume that $\mathcal{C}^{\prime}$ is smooth. Without loss of generality we may assume that $\mathcal{C}^{\prime}$ does not have unstable fibers over $\Delta^{\star}$. By the properness of the moduli space of stable maps we have a family $0\to\mathcal{S}\to\mathcal{O}_{\mathcal{C}}^r\to\mathcal{Q}\to0$ over $\Delta$. Restricting to $\mathcal{C}^{\prime}$ we obtain a family of stable maps $$0\to\mathcal{S}^{\prime}\to\mathcal{O}_{\mathcal{C}^{\prime}}^r\to\mathcal{Q}^{\prime}\to0.$$ We have the following exact sequence
\begin{equation}\label{torsion}
0\to T\stackrel{i}{\rightarrow} p_*^{\prime}\mathcal{S}^{\vee}\stackrel{q}{\rightarrow}{\mathcal{S}^{\prime}}^{\vee}\to 0
\end{equation}
where $T$ is the torsion subsheaf of $p_*^{\prime}\mathcal{S}^{\vee}$. By our assumption $p^{\prime}=p$ over $\Delta^*$. This gives that over $\Delta^*$ we have a morphism $$p_*^{\prime}\mathcal{S}^{\vee}\stackrel{\rho}{\rightarrow}\hat{\mathcal{S}}^{\vee}.$$ As $\rho\circ i(T)=0$, the universal property of quotients gives that $\rho$ factors (uniquely) through ${\mathcal{S}^{\prime}}^{\vee}$. This gives a morphism of locally free sheaves 
\begin{equation}\label{dualprime}
0\to{\mathcal{S}^{\prime}}^{\vee}\stackrel{\rho^{\prime}}{\rightarrow}\hat{\mathcal{S}}^{\vee}
\end{equation}
over $\Delta^*$ which is an isomorphism on $U^*$. Dualizing (\ref{dualprime}) and applying Lemma \ref{geninj} we have that $\hat{\mathcal{S}}\to\mathcal{S}^{\prime}$ is a flat family of quotients over $\Delta^*$ and by the properness of $Quot$ scheme we have 
\begin{equation}
0\to\hat{\mathcal{S}}^{\prime}\to\mathcal{S}^{\prime}
\end{equation} 
over $\mathcal{C}^{\prime}$. Dualizing again we have 
\begin{equation}\label{almost}
0\to{\mathcal{S}^{\prime}}^{\vee}\stackrel{\rho^{\prime}}{\rightarrow}\hat{\mathcal{S}^{\prime}}^{\vee}\to\tau\to0
\end{equation} is a flat family of quotients which extends (\ref{dualprime}). As $\mathcal{C}^{\prime}$ is smooth and ${\hat{\mathcal{S}^{\prime}}}^{\vee}$ reflexive we obtain that ${\hat{\mathcal{S}^{\prime}}}^{\vee}$ is is locally free. By (\ref{torsion}) and (\ref{almost}) we have  
\begin{equation}\label{gata}
p_*^{\prime}\mathcal{S}^{\vee}\to \hat{\mathcal{S}^{\prime}}^{\vee}\to\tau\to0
\end{equation} over $\mathcal{C}^{\prime}$ which satisfies the conditions 1-4 from Construction \ref{def}. Let us prove it also satisfies condition 5. Let $\tau_1,...,\tau_m$ be the irreducible components of $\tau$ and $\mathcal{C}_{0,1},...,\mathcal{C}_{0,m}$ be the irreducible components of $\mathcal{C}_0$. As $length(\tau_i)>0$ over $\Delta^*$ for all $i$ we obtain that $length(\tau_i)>0$ over $\Delta$. This implies that condition 5 in Definition \ref{def} is true for the points $P\in \mathcal{C}^{\prime}\cap \mathcal{C}_0$. 
\\We now contract the remaining unstable curves in the central fiber. Let $p^{\prime\prime}:\mathcal{C}^{\prime}\to \mathcal{C}^{\prime\prime}$ be the contraction of \emph{one} unstable rational tail and let $P$ the attachment point with the rest of the curve. Let us show that we can find a vector bundle $\mathcal{S}^{\prime\prime}$ and a morphism of sheaves 
\begin{equation}\label{onecontr}
p^{\prime\prime}_*(\hat{\mathcal{S}^{\prime}}^{\vee})\stackrel{\rho^{\prime\prime}}{\rightarrow}{\mathcal{S}^{\prime\prime}}^{\vee}
\end{equation}
on $\mathcal{C}^{\prime}$ which satisfies the conditions 1-4 from Definition \ref{def} and the restriction of $\mathcal{S}^{\prime\prime}$ to the fiber of $\mathcal{C}^{\prime\prime}\to B$ which contains $P$ is strictly negative. By Lemma \ref{nattrans} we have a bundle $\mathcal{T}$ with a morphism $\mathcal{T}\to \hat{\mathcal{S}}^{\prime}$. By construction we have that the restriction of $\mathcal{T}$ to the fiber of $\mathcal{C}^{\prime\prime}\to B$ which contains $P$ is strictly negative. Dualizing and pushing forward we obtain a morphism $$p^{\prime\prime}_*(\hat{\mathcal{S}^{\prime}}^{\vee})\to p^{\prime\prime}_*(\mathcal{T}^{\vee})$$ which is an isomorphism on $U$. By construction $\mathcal{T}|_D$ is trivial which implies that $p^{\prime\prime}_*(\mathcal{T}^{\vee})$ is a vector bundle. Moreover, the restriction of $p^{\prime\prime}_*(\mathcal{T}^{\vee})$ to the fiber of $\mathcal{C}^{\prime\prime}\to B$ which contains $P$ is strictly positive. This shows that we can take $\mathcal{S}^{\prime\prime}=p^{\prime\prime}_*(\mathcal{T}^{\vee})$. Repeating this procedure for all unstable rational tails we may assume that (\ref{onecontr}) holds for $p^{\prime\prime}:\mathcal{C}^{\prime}\to \hat{\mathcal{C}}$ the contraction of \emph{all} unstable rational tails. This means that $p=p^{\prime\prime}\circ p^{\prime}$. Take $\hat{\mathcal{S}}=\mathcal{S}^{\prime\prime}$ and by (\ref{gata}) we get a morphism $$\rho:p_*({\mathcal{S}}^{\vee})\to\hat{\mathcal{S}}^{\vee}$$ on $\hat{\mathcal{C}}$ as in construction (\ref{def}). In the end we contract -2 curves on which $\mathcal{S}$ and $\hat{\mathcal{S}}$ are trivial. 

\begin{corollary} The stack $\overline{MQ}_{g,n}(\G(k,r),d)$ is a proper DM stack.
\end{corollary}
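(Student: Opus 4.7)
The plan is to reduce the corollary to the analogous statement for $\bar{P}$, using that $\overline{MQ}_{g,n}(\G(k,r),d)$ sits inside $\bar{P}$ as a closed substack. First I would unwind the construction: since $\bar{P}\simeq\overline{M}_{g,n}(\G(k,r),d)\times_{\mathfrak{Bun}_{g,n}(k,d)}\mathfrak{P}$, the stack $\overline{MQ}_{g,n}(\G(k,r),d)$ is obtained from $\bar{P}$ by base change along $\widetilde{\mathfrak{Bun}}_{g,n}(k,d)\hookrightarrow\mathfrak{P}$. As $\widetilde{\mathfrak{Bun}}_{g,n}(k,d)$ was defined to be an irreducible component of $\mathfrak{P}$, this inclusion is closed, and hence $\overline{MQ}_{g,n}(\G(k,r),d)$ is closed in $\bar{P}$.

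Next I would verify that $\bar{P}$ is a proper Deligne--Mumford stack by collecting the ingredients already in place. The immersion $i:\bar{P}\to\overline{M}_{g,n}(\G(k,r),d)\times_{\mathfrak{M}^{rtf}_{g,n}}\overline{Q}_{g,n}(\G(k,r),d)$ into a finite type Deligne--Mumford stack forces $\bar{P}$ to be of finite type and Deligne--Mumford. Separatedness is the preceding Corollary. The valuative criterion for existence of limits is the content of the lengthy construction immediately above: starting from a family of m\&q's over a punctured disc $\Delta^{*}$, successive applications of properness of the stable map space, of Lemma \ref{nattrans} and the contraction construction of unstable rational tails, and of properness of the relative Quot scheme produce an extension over $\Delta$.

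Finally, to conclude the corollary, I would argue that the limit just produced in fact lies in $\overline{MQ}_{g,n}(\G(k,r),d)$ rather than only in $\bar{P}$. Since the composition $\Delta\to\bar{P}\to\mathfrak{P}$ has connected source and its restriction to $\Delta^{*}$ lands in the irreducible component $\widetilde{\mathfrak{Bun}}_{g,n}(k,d)$, which is closed in $\mathfrak{P}$, the entire image of $\Delta$ must land there as well. A closed substack of a proper Deligne--Mumford stack being itself proper Deligne--Mumford, this gives the result. The only delicate point in the plan is this last automatic factorization through the irreducible component; every other step is a direct appeal to already established statements.
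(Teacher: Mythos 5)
Your proposal is correct and takes essentially the same route as the paper: $\overline{MQ}_{g,n}(\G(k,r),d)$ is a closed substack of the proper Deligne--Mumford stack $\bar{P}$, hence itself proper Deligne--Mumford. The paper records this in one line, while you unpack why the inclusion is closed (base change of the closed inclusion of the irreducible component $\widetilde{\mathfrak{Bun}}_{g,n}(k,d)\hookrightarrow\mathfrak{P}$) and why $\bar{P}$ is DM of finite type (via the immersion $i$); your final valuative-criterion paragraph is redundant once closedness of the substack is in hand, but it is not wrong.
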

\begin{proof} By definition $\overline{MQ}_{g,n}(\G(k,r),d)$ is a closed substack of $\bar{P}$. As $\bar{P}$ is proper, we obtain that $\overline{MQ}_{g,n}(\G(k,r),d)$ is proper.
\end{proof}
\begin{proposition}\label{summary} Let us consider $$\mathfrak{Q}_{g,n}(\G(k,r),d):=\overline{Q}_{g,n}(\G(k,r),d)\times_{\mathfrak{Bun}_{g,n}(k,d)}\widetilde{\mathfrak{Bun}}_{g,n}(k,d).$$ Then we have the following commutative diagram

\begin{equation*}
\xymatrix{
&{\overline{MQ}_{g,n}(\G(k,r),d)}\ar[d]_q\ar[ld]_{c_1}\ar[rd]^{c_2}\\
\overline{M}_{g,n}(\G(k,r),d)\ar[dd]_{\nu_M}&\mathfrak{Q}_{g,n}(\G(k,r),d)\ar[d]\ar[r]^s&\overline{Q}_{g,n}(\G(k,r),d)\ar[d]^{\nu_Q}\\
&\widetilde{\mathfrak{Bun}}_{g,n}(k,d)\ar[ld]_{\pi_1}\ar[r]^{\pi_2}&\mathfrak{Bun}_{g,n}(k,d)\\
\mathfrak{Bun}_{g,n}(k,d).}
\end{equation*}
\end{proposition}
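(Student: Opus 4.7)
The plan is largely formal: Proposition~\ref{summary} asserts that a specified diagram commutes, and every arrow except possibly $q$ has already been constructed. The real content is therefore to produce $q$ and then check commutativity functorially on $B$-points.

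First I would construct
$$q:\overline{MQ}_{g,n}(\G(k,r),d)\to \mathfrak{Q}_{g,n}(\G(k,r),d)$$
by invoking the universal property of the fiber product which defines the target. Given a test scheme $B$ and an object
$$\xi=(p:\mathcal{C}\to\hat{\mathcal{C}},\,\mathcal{O}^{\oplus r}\stackrel{s}{\to}\mathcal{S}^{\vee},\,p_*\mathcal{S}^{\vee}\stackrel{\rho}{\to}\hat{\mathcal{S}}^{\vee})\in\overline{MQ}_{g,n}(\G(k,r),d)(B),$$
the image $c_2(\xi)$ is an object of $\overline{Q}_{g,n}(\G(k,r),d)(B)$, while forgetting the map $s$ gives an object $\pi(\xi)=(p,\mathcal{S},\hat{\mathcal{S}},\rho)\in\mathfrak{P}(B)$. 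Since $\overline{MQ}_{g,n}(\G(k,r),d)$ is by construction the fiber product of $\overline{M}_{g,n}(\G(k,r),d)$ with $\widetilde{\mathfrak{Bun}}_{g,n}(k,d)$ over $\mathfrak{Bun}_{g,n}(k,d)$, the object $\pi(\xi)$ automatically factors through the irreducible component $\widetilde{\mathfrak{Bun}}_{g,n}(k,d)\subset\mathfrak{P}$. Both $c_2(\xi)$ and $\pi(\xi)$ restrict to the same object $(\hat{\mathcal{C}},\hat{\mathcal{S}})$ in $\mathfrak{Bun}_{g,n}(k,d)$, so the universal property of the fiber product assembles them into an object of $\mathfrak{Q}_{g,n}(\G(k,r),d)(B)$, which I take as $q(\xi)$.

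Then I would verify each remaining face of the diagram at the level of $B$-points. The triangle $s\circ q=c_2$ holds because both sides return the stable quotient $\mathcal{O}^{\oplus r}\to\hat{\mathcal{S}}^{\vee}$. The right-hand square $\nu_Q\circ s\circ q=\pi_2\circ\mathrm{pr}_{\widetilde{\mathfrak{Bun}}}\circ q$ commutes since both compositions return $(\hat{\mathcal{C}},\hat{\mathcal{S}})$. The left-hand trapezoid $\nu_M\circ c_1=\pi_1\circ\mathrm{pr}_{\widetilde{\mathfrak{Bun}}}\circ q$ holds because both paths return $(\mathcal{C},\mathcal{S})$. The bottom triangle relating $\pi_1$ and $\pi_2$ is precisely the remark recorded after Lemma~\ref{dimension}.

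I do not anticipate a serious obstacle. The only point requiring mild vigilance is the factorization of $\pi(\xi)$ through $\widetilde{\mathfrak{Bun}}_{g,n}(k,d)$ rather than some other component of $\mathfrak{P}$, and this is automatic from the definition of $\overline{MQ}_{g,n}(\G(k,r),d)$ as a fiber product with $\widetilde{\mathfrak{Bun}}_{g,n}(k,d)$. Once $q$ is in place, commutativity reduces to matching tuples of data, so I would record the comparison in a single displayed identification and observe that each equality is evident from the constructions of $c_1$, $c_2$, $\pi_1$, $\pi_2$ and the projection $s$.
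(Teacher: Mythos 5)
Your proof is correct and takes essentially the same route as the paper: both construct $q$ by noting that $c_2$ and the forgetful map to $\widetilde{\mathfrak{Bun}}_{g,n}(k,d)$ agree over $\mathfrak{Bun}_{g,n}(k,d)$ and invoking the universal property of the fiber product defining $\mathfrak{Q}_{g,n}(\G(k,r),d)$, after which commutativity of the remaining faces is immediate from the definitions. Your write-up is in fact slightly more careful than the paper's, which contains a typo (writing $\overline{M}_{g,n}(\G(k,r),d)$ where $\overline{MQ}_{g,n}(\G(k,r),d)$ is meant, both in the displayed square and as the domain of $q$).
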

\begin{proof}
The definitions imply that the following diagram is commutative
\begin{equation*}
\xymatrix{{\overline{M}_{g,n}(\G(k,r),d)}\ar[r]\ar[d]&\overline{Q}_{g,n}(\G(k,r),d)\ar[d]\\
\widetilde{\mathfrak{Bun}}_{g,n}(k,d)\ar[r]&\mathfrak{Bun}_{g,n}(k,d).}
\end{equation*}
By the universal property of cartesian products we obtain a map $$q:\overline{M}_{g,n}(\G(k,r),d)\to \mathfrak{Q}_{g,n}(\G(k,r),d).$$

\end{proof}
\begin{remark} Let $C$ be nodal curve and $C_0$ be its rational tail. If $$0\to S\to\mathcal{O}^r_C$$ is a stable map from $C$ to $\G(k,r)$, then the map $q$ forgets the map $$S_{|C_0}\to\mathcal{O}^r_{C_0}.$$ If $\overline{M}_{g,n}(\G(k,r),d)$ has components whose points are maps with rational tails, then $q$ is not an isomorphism. 
\end{remark}

\subsection{Obstruction theories}\label{oth}
Let us shortly recall a few basic facts about obstruction theories of moduli spaces of stable maps to $\G(k,r)$ and stable quotients. Let $$\epsilon_M: \overline{M}_{g,n}(\G(k,r),d)\to \mathfrak{M}_{g,n}$$ be the morphism that forgets the map (and does not stabilize the pointed curve) and $$\pi_M: \mathcal{C}_M\to \overline{M}_{g,n}(\G(k,r),d)$$ the universal curve over $\overline{M}_{g,n}(\G(k,r),d)$ and let $ev$ denote the evaluation map $ev:\overline{M}_{g,n+1}(\G(k,r),d)\to \G(k,r)$ (see \cite{b}). Let $$0\to\mathcal{S}_M\to\mathcal{O}_{\mathcal{C}_M}^n\to\mathcal{Q}_M\to0$$ be the universal sequence on $\mathcal{C}_M$. We have that $ev^*T_{\G(k,r)}\simeq \mathcal{Q}_M\otimes\mathcal{S}_M^{\vee}$. This shows that $$E_{\overline{M}_{g,n}(\G(k,r),d)/\mathfrak{M}}^{\bullet}:=\mathcal{R}^{\bullet}{\pi_M}_*\mathcal{Q}_M\otimes\mathcal{S}_M^{\vee}$$ is a dual obstruction theory for the morphism $p$ (see \cite{bf}). We call $$[\overline{M}_{g,n}(\G(k,r),d)]^{\vv}:={\epsilon_M}_{\mathfrak{E}_{\overline{M}_{g,n}(\G(k,r),d)/\mathfrak{M}}}^![\mathfrak{M}_{g,n}]$$ the virtual class of $\overline{M}_{g,n}(\G(k,r),d)$. Here ${\epsilon_M}_{\mathfrak{E}_{\overline{M}_{g,n}(\G(k,r),d)/\mathfrak{M}}}^!$ is the virtual pull-back in \cite{eu}.

As the moduli space of stable maps, the moduli space of stable quotients $\bar{Q}_{g,m}(\G(r,n),d)$ has a morphism $\epsilon_Q:\bar{Q}_{g,m}(\G(r,n),d)\to\mathfrak{M}_{g,m}$to the Artin stack of nodal curves. Let $\pi_Q:\hat{\mathcal{C}}_Q\to\bar{Q}_{g,m}(\G(r,n),d)$ be the universal curve over $\bar{Q}_{g,m}(\G(r,n),d)$ and let $$0\to\hat{\mathcal{S}}_Q\to\mathcal{O}_{\hat{\mathcal{C}}_Q}^n\to\hat{\mathcal{Q}}_Q\to0$$ be the universal sequence on $\hat{\mathcal{C}}_Q$. Then the complex $$E^{\bullet}_{\overline{Q}_{g,n}(\G(k,r),d)/\mathfrak{M}}=R{\pi_Q}_*RHom(\hat{\mathcal{S}},\hat{\mathcal{Q}}_M)$$ is a dual obstruction theory relative to $\epsilon_Q$. We call $$[\overline{Q}_{g,n}(\G(k,r),d)]^{\vv}:={\epsilon_Q}_{\mathfrak{E}_{\overline{Q}_{g,n}(\G(k,r),d)/\mathfrak{M}}}^![\mathfrak{M}_{g,n}]$$ the virtual class of $\overline{Q}_{g,n}(\G(k,r),d)$.

\paragraph{Obstruction theories relative moduli spaces of bundles.} In the following we define obstruction theories relative to $\mathfrak{Bun}_{g,n}(k,d)$. The map $$\nu_M:\overline{M}_{g,n}(\G(k,r),d)\to \mathfrak{Bun}_{g,n}(k,d)$$ induces a morphism between cotangent complexes and thus we obtain a distinguished triangle
\begin{equation*}
\nu_M^*L_{\mathfrak{Bun}_{g,n}(k,d)}\to L_{\overline{M}_{g,n}(\G(k,r),d)}\to  L_{\overline{M}_{g,n}(\G(k,r),d)/\mathfrak{Bun}_{g,n}(k,d)}.
\end{equation*}
Tensoring the tautological sequence on the universal curve over $\overline{M}_{g,n}(\G(k,r),d)$ with $\mathcal{S}_M^{\vee}$ we obtain an exact sequence
\begin{equation*}
0\to \mathcal{S}_M\otimes\mathcal{S}_M^{\vee}\to(\mathcal{S}_M^{\vee})^{\oplus r}\to \mathcal{Q}_M\otimes \mathcal{S}_M^{\vee}\to 0
\end{equation*}
which induces a distinguished triangle
\begin{equation*}
R^{\bullet}{\pi_M}_*(\mathcal{S}_M^{\vee})^{\oplus r}\to R^{\bullet}{\pi_M}_*\mathcal{Q}_M\otimes \mathcal{S}_M^{\vee}\to R^{\bullet}{\pi_M}_*\mathcal{S}_M\otimes\mathcal{S}_M^{\vee}[1].
\end{equation*}
By the Cohomology and base change theorem we obtain that $\nu_M^*T_{\mathfrak{Bun}_{g,n}(k,d)}=R^{\bullet}{\pi_M}_*\mathcal{S}_M\otimes\mathcal{S}_M^{\vee}[1]$. This shows that we have the following commutative diagram
\begin{equation*}
\xymatrix{T_{\overline{M}_{g,n}(\G(k,r),d)/\mathfrak{Bun}_{g,n}(k,d)}\ar[r]\ar[d]&T_{\overline{M}_{g,n}(\G(k,r),d)}\ar[r]\ar[d]&\nu_M^*T_{\mathfrak{Bun}_{g,n}(k,d)}\ar@{=}[d]\\
R^{\bullet}{\pi_M}_*(\mathcal{S}_M^{\vee})^{\oplus r}\ar[r] &R^{\bullet}{\pi_M}_*\mathcal{Q}_M\otimes \mathcal{S}_M^{\vee}\ar[r]& R^{\bullet}{\pi_M}_*\mathcal{S}_M\otimes\mathcal{S}_M^{\vee}[1]}
\end{equation*}
and therefore $R^{\bullet}{\pi_M}_*(\mathcal{S}_M^{\vee})^{\oplus r}$ is a dual relative obstruction theory for $\nu_M$.
\\In a completely analogous manner we obtain that $R^{\bullet}{\pi_M}_*(\hat{\mathcal{S}}_Q^{\vee})^{\oplus r}$ is a dual relative obstruction theory for $\nu_Q$.
\begin{construction}\label{relobs} From the cartesian diagram in Proposition \ref{summary} we obtain that $E^{\bullet}_{\overline{MQ}_{g,n}(\G(k,r),d)/\widetilde{\mathfrak{Bun}}_{g,n}(k,d)}=c_1^*R^{\bullet}{\pi_M}_*(\mathcal{S}_M^{\vee})^{\oplus r}$ is a dual perfect obstruction for the map $$\overline{MQ}_{g,n}(\G(k,r),d)\to\widetilde{\mathfrak{Bun}}_{g,n}(k,d).$$ By Lemma \ref{dimension} $\widetilde{\mathfrak{Bun}}_{g,n}(k,d)$ has pure dimension, which means that $c_1^*R^{\bullet}{\pi_M}_*(\mathcal{S}_M^{\vee})^{\oplus r}$ gives rise to a virtual class $$[\overline{MQ}_{g,n}(\G(k,r),d)]^{\vv}=(\nu_{MQ})^!_{\mathfrak{E}_{\overline{MQ}_{g,n}(\G(k,r),d)/\widetilde{\mathfrak{Bun}}_{g,n}(k,d)}}[\widetilde{\mathfrak{Bun}}_{g,n}(k,d)].$$
\end{construction}
\section{Comparison of virtual fundamental classes}\label{compvf}
\begin{proposition}\label{morfism} The tautological morphism $$\rho:p_*(\mathcal{S}_{MQ}^{\vee})\to\hat{\mathcal{S}}^{\vee}_{MQ}$$ on $\overline{MQ}_{g,n}(\G(k,r),d)$ induces a morphism $$c_1^*R^{\bullet}{\pi_M}_*(\mathcal{S}_M^{\vee})^{\oplus r} \to c_2^*R^{\bullet}{\pi_Q}_*(\hat{\mathcal{S}}_Q^{\vee})^{\oplus r}.$$
\end{proposition}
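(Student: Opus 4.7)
The plan is to obtain the desired morphism by producing it first over the universal curves on $\overline{MQ}_{g,n}(\G(k,r),d)$ and then pushing down. Denote by $\pi_{MQ}:\mathcal{C}_{MQ}\to\overline{MQ}_{g,n}(\G(k,r),d)$ and $\hat{\pi}_{MQ}:\hat{\mathcal{C}}_{MQ}\to\overline{MQ}_{g,n}(\G(k,r),d)$ the two universal curves, and by $p:\mathcal{C}_{MQ}\to\hat{\mathcal{C}}_{MQ}$ the tautological contraction, so that $\pi_{MQ}=\hat{\pi}_{MQ}\circ p$. By construction the pullback of the universal subsheaf $\mathcal{S}_M$ under $c_1$ (resp. of $\hat{\mathcal{S}}_Q$ under $c_2$) recovers $\mathcal{S}_{MQ}$ (resp. $\hat{\mathcal{S}}_{MQ}$), and the universal curves on $\overline{M}$ and $\overline{Q}$ pull back to the corresponding universal curves on $\overline{MQ}$.

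The first step is to identify the two sides of the claimed morphism with push-forwards from $\overline{MQ}$. By flatness of the universal curves and of the relevant sheaves over their bases, cohomology and base change (Lemma \ref{cbpopa} applied after a suitable dualising twist, or in its derived form) yields natural isomorphisms
\begin{align*}
c_1^*R^{\bullet}{\pi_M}_*(\mathcal{S}_M^{\vee})^{\oplus r}&\simeq R^{\bullet}{\pi_{MQ}}_*(\mathcal{S}_{MQ}^{\vee})^{\oplus r},\\
c_2^*R^{\bullet}{\pi_Q}_*(\hat{\mathcal{S}}_Q^{\vee})^{\oplus r}&\simeq R^{\bullet}{\hat{\pi}_{MQ}}_*(\hat{\mathcal{S}}_{MQ}^{\vee})^{\oplus r}.
\end{align*}
Thus it suffices to produce a morphism $R^{\bullet}{\pi_{MQ}}_*(\mathcal{S}_{MQ}^{\vee})^{\oplus r}\to R^{\bullet}{\hat{\pi}_{MQ}}_*(\hat{\mathcal{S}}_{MQ}^{\vee})^{\oplus r}$ on $\overline{MQ}_{g,n}(\G(k,r),d)$.

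Next, I would use the contraction $p$ to relate the two push-forwards. The fibers of $p$ are either single points or chains of rational tails of the stable map. On such a rational tail $C^0$ the stability condition for stable maps forces $\mathcal{S}^{\vee}|_{C^0}$ to have strictly positive degree on every irreducible component, so a standard normalisation argument gives $H^1(C^0,\mathcal{S}^{\vee})=0$; consequently $R^1 p_*\mathcal{S}_{MQ}^{\vee}=0$ and (again by Lemma \ref{cbpopa}) $p_*\mathcal{S}_{MQ}^{\vee}$ is locally free and its formation commutes with arbitrary base change. The Leray spectral sequence for $\pi_{MQ}=\hat{\pi}_{MQ}\circ p$ therefore degenerates to an isomorphism
\begin{equation*}
R^{\bullet}{\pi_{MQ}}_*(\mathcal{S}_{MQ}^{\vee})^{\oplus r}\;\simeq\;R^{\bullet}{\hat{\pi}_{MQ}}_*\bigl(p_*\mathcal{S}_{MQ}^{\vee}\bigr)^{\oplus r}.
\end{equation*}

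Finally, the tautological morphism $\rho:p_*(\mathcal{S}_{MQ}^{\vee})\to\hat{\mathcal{S}}_{MQ}^{\vee}$ taken in $r$ copies and pushed forward via $R\hat{\pi}_{MQ*}$ furnishes the required map
\begin{equation*}
R^{\bullet}{\hat{\pi}_{MQ}}_*\bigl(p_*\mathcal{S}_{MQ}^{\vee}\bigr)^{\oplus r}\longrightarrow R^{\bullet}{\hat{\pi}_{MQ}}_*(\hat{\mathcal{S}}_{MQ}^{\vee})^{\oplus r},
\end{equation*}
which under the identifications above becomes the desired morphism $c_1^*R^{\bullet}{\pi_M}_*(\mathcal{S}_M^{\vee})^{\oplus r}\to c_2^*R^{\bullet}{\pi_Q}_*(\hat{\mathcal{S}}_Q^{\vee})^{\oplus r}$. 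The only non-formal input is the vanishing $R^1p_*\mathcal{S}_{MQ}^{\vee}=0$, which I expect to be the main point requiring care: it is where the stability of the map (positivity of $\mathcal{S}^{\vee}$ on rational tails, ensured by Condition 3 of Construction \ref{funcp}) is used in an essential way.
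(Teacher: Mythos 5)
Your proof follows essentially the same route as the paper's: identify $c_1^*R^{\bullet}\pi_{M*}(\mathcal{S}_M^{\vee})^{\oplus r}$ with $R^{\bullet}\pi_{MQ*}(\mathcal{S}_{MQ}^{\vee})^{\oplus r}$ and $c_2^*R^{\bullet}\pi_{Q*}(\hat{\mathcal{S}}_Q^{\vee})^{\oplus r}$ with $R^{\bullet}\hat{\pi}_{MQ*}(\hat{\mathcal{S}}_{MQ}^{\vee})^{\oplus r}$ by cohomology and base change, factor $\pi_{MQ}=\hat{\pi}_{MQ}\circ p$, and then feed $\rho$ through $R\hat{\pi}_{MQ*}$; this is precisely the chain of identifications (\ref{r1}), (\ref{r2}), (\ref{r3}) in the paper. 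You helpfully make explicit the vanishing $R^1p_*\mathcal{S}_{MQ}^{\vee}=0$, which the paper uses silently in order to pass from $\rho$ (a morphism out of $p_*\mathcal{S}_{MQ}^{\vee}$) to a morphism out of $Rp_*\mathcal{S}_{MQ}^{\vee}$.

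One quibble with your justification of that vanishing: stability does \emph{not} force $\mathcal{S}^{\vee}$ to have positive degree on every component of a rational tail; it only forces $\wedge^k\mathcal{S}^{\vee}$ to have positive degree on each unstable contracted component, and individual summands of $\mathcal{S}^{\vee}$ on a $\pp^1$ could a priori be negative. The correct and simpler reason for $H^1(C^0,\mathcal{S}^{\vee})=0$ is that $\mathcal{S}^{\vee}$ is a quotient of $\mathcal{O}_C^{\oplus r}$, hence globally generated, and a globally generated sheaf on a genus-zero nodal curve has vanishing $H^1$ (surject $\mathcal{O}^{\oplus m}\twoheadrightarrow\mathcal{S}^{\vee}$ and use $H^1(C^0,\mathcal{O})=0$ together with $H^2=0$ on a curve). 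In particular this vanishing does not actually use map stability, so citing Condition 3 of Construction \ref{funcp} is the wrong attribution, though the conclusion and the rest of the argument are sound.
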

\begin{proof}
By the construction of $\overline{MQ}_{g,n}(\G(k,r),d)$ we see that we have a commutative diagram with the right-down square cartesian
\begin{equation}\label{diagobs}
\xymatrix{\mathcal{C}_{MQ}\ar[dr]^p\ar[drr]^{c_2^{\prime\prime}}\ar[ddr]_{\pi_{MQ}}\\
&\hat{\mathcal{C}}_{MQ}\ar[r]^{c_2^{\prime}}\ar[d]_{t}&\hat{\mathcal{C}}_Q\ar[d]^{\pi_Q}\\
&\overline{MQ}_{g,n}(\G(k,r),d)\ar[r]^{c_2}&\overline{Q}_{g,n}(\G(k,r),d).}
\end{equation}
By cohomology and base change in the diagram
\begin{equation*}
\xymatrix{\mathcal{C}_{MQ}\ar[r]\ar[d]_{\pi_{MQ}}&\mathcal{C}_M\ar[d]^{\pi_{M}}\\
\overline{MQ}_{g,n}(\G(k,r),d)\ar[r]^{c_1}&\overline{M}_{g,n}(\G(k,r),d)}
\end{equation*} 
we have that $c_1^*R^{\bullet}{\pi_M}_*\mathcal{S}_M^{\vee}\simeq R^{\bullet}{\pi_{MQ}}_*c_1^*\mathcal{S}_M^{\vee}$ and by construction we have that $c_1^*\mathcal{S}_M^{\vee}\simeq \mathcal{S}_{MQ}^{\vee}$. Combining the two relations we obtain a canonical isomorphism 
\begin{equation}\label{r1}c_1^*R^{\bullet}{\pi_M}_*\mathcal{S}_M^{\vee}\simeq R^{\bullet}{\pi_{MQ}}_*\mathcal{S}_{MQ}^{\vee}.
\end{equation}
From the commutativity of diagram (\ref{diagobs}) we have that 
\begin{equation}\label{r2}R^{\bullet}{\pi_{MQ}}_*\mathcal{S}_{MQ}^{\vee}\simeq R^{\bullet}(p\circ t)_*\mathcal{S}_{MQ}^{\vee}
\end{equation} and by the construction of $\overline{MQ}_{g,n}(\G(k,r),d)$ we have $\hat{\mathcal{S}}_{MQ}^{\vee}\simeq {c_2^{\prime}}^*\hat{\mathcal{S}}_Q^{\vee} $.
Using now cohomology and base change in diagram (\ref{diagobs}) we obtain that
\begin{equation}\label{r3}c_2^*R^{\bullet}{\pi_Q}_*\hat{\mathcal{S}}_Q^{\vee}\simeq R^{\bullet}t_*{c_2^{\prime}}^*\hat{\mathcal{S}}_Q^{\vee}.
\end{equation}
By (\ref{r1}), (\ref{r2}) and (\ref{r3}) we see that $\rho:p_*(\mathcal{S}_{MQ}^{\vee})\to\hat{\mathcal{S}}_{MQ}$ induces a morphism $$c_1^*R^{\bullet}{\pi_M}_*(\mathcal{S}_M^{\vee})^{\oplus r} \to c_2^*R^{\bullet}{\pi_Q}_*(\hat{\mathcal{S}}_Q^{\vee})^{\oplus r}.$$
\end{proof}

\begin{lemma}\label{perffibers} Let $F$ be the cone of the morphism $$c_1^*R^{\bullet}{\pi_M}_*(\mathcal{S}_M^{\vee})^{\oplus r} \to c_2^*R^{\bullet}{\pi_Q}_*(\hat{\mathcal{S}}_Q^{\vee})^{\oplus r}.$$ Then, $F$ is a perfect complex.
\end{lemma}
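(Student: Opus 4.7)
The plan is to prove perfectness of $F$ by an abstract triangulated-category argument: I will show that both the source and target of the morphism are perfect complexes on $\overline{MQ}_{g,n}(\G(k,r),d)$, and invoke the fact that the cone of a morphism between perfect complexes is again perfect (since perfect complexes form a triangulated subcategory of the derived category, closed under shifts and cones).

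First I would check that $R^{\bullet}{\pi_M}_*(\mathcal{S}_M^{\vee})^{\oplus r}$ is a perfect complex on $\overline{M}_{g,n}(\G(k,r),d)$. The universal curve $\pi_M:\mathcal{C}_M\to\overline{M}_{g,n}(\G(k,r),d)$ is a proper flat morphism of relative dimension one, and $(\mathcal{S}_M^{\vee})^{\oplus r}$ is a locally free sheaf. Standard cohomology-and-base-change arguments (as used in the construction of the obstruction theory for $\overline{M}_{g,n}(\G(k,r),d)$ recalled in Section~\ref{oth}) then show that $R^{\bullet}{\pi_M}_*(\mathcal{S}_M^{\vee})^{\oplus r}$ is locally quasi-isomorphic to a two-term complex of locally free sheaves of finite rank, concentrated in degrees $[0,1]$. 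The same argument, applied to $\pi_Q:\hat{\mathcal{C}}_Q\to\overline{Q}_{g,n}(\G(k,r),d)$ and the locally free sheaf $(\hat{\mathcal{S}}_Q^{\vee})^{\oplus r}$, shows that $R^{\bullet}{\pi_Q}_*(\hat{\mathcal{S}}_Q^{\vee})^{\oplus r}$ is perfect on $\overline{Q}_{g,n}(\G(k,r),d)$.

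Next I would recall that the derived pullback of a perfect complex is perfect. Since $c_1$ and $c_2$ are morphisms of Deligne--Mumford stacks, both pullbacks $c_1^*R^{\bullet}{\pi_M}_*(\mathcal{S}_M^{\vee})^{\oplus r}$ and $c_2^*R^{\bullet}{\pi_Q}_*(\hat{\mathcal{S}}_Q^{\vee})^{\oplus r}$ are perfect complexes on $\overline{MQ}_{g,n}(\G(k,r),d)$. The cone $F$ then sits in a distinguished triangle
\begin{equation*}
c_1^*R^{\bullet}{\pi_M}_*(\mathcal{S}_M^{\vee})^{\oplus r}\to c_2^*R^{\bullet}{\pi_Q}_*(\hat{\mathcal{S}}_Q^{\vee})^{\oplus r}\to F\to c_1^*R^{\bullet}{\pi_M}_*(\mathcal{S}_M^{\vee})^{\oplus r}[1]
\end{equation*}
and hence, as a cone between two perfect complexes, $F$ is perfect.

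No serious obstacle arises: the argument is essentially formal once one has identified the source and target as derived pushforwards of vector bundles along proper flat relative curves. If a more concrete description of $F$ is desired, one may additionally use the identifications $(\ref{r1})$--$(\ref{r3})$ of Proposition~\ref{morfism} to write $F\simeq R^{\bullet}t_*\bigl(\mathrm{cone}(\rho)\bigr)^{\oplus r}$ and observe that $\mathrm{cone}(\rho)$ is supported on the image under $p$ of the contracted locus, which is finite over $\overline{MQ}_{g,n}(\G(k,r),d)$; this reconfirms perfectness but is not strictly needed for the statement.
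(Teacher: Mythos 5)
Your argument, as written, only establishes that $F$ is a perfect complex in the abstract derived-category sense --- locally quasi-isomorphic to a bounded complex of finite-rank locally free sheaves. That fact is indeed automatic (cone of a map of perfects is perfect), but it is not the content of this lemma. The way the lemma is used in Theorem~\ref{final} --- to verify the hypotheses of Proposition~3.14 of \cite{eu2} --- requires more: the cone of the map of dual relative obstruction theories must be perfect of a controlled amplitude, which concretely amounts to the map $H^1(A)\to H^1(B)$ being surjective fiber by fiber. Since both the source $c_1^*R^{\bullet}{\pi_M}_*(\mathcal{S}_M^{\vee})^{\oplus r}$ and the target $c_2^*R^{\bullet}{\pi_Q}_*(\hat{\mathcal{S}}_Q^{\vee})^{\oplus r}$ sit in degrees $[0,1]$, your triangulated-category argument only bounds $F$ to $[-1,1]$ and says nothing about $H^1(F)=\mathrm{coker}\bigl(H^1(A)\to H^1(B)\bigr)$. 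That cokernel is exactly what the virtual push-forward theorem needs to vanish.

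The paper's proof addresses precisely this point: on each geometric fiber it shows that
\begin{equation*}
H^1(C,f^*S^{\vee})\longrightarrow H^1(\hat{C},\hat{S}^{\vee}(\textstyle\sum d_ix_i))
\end{equation*}
is surjective, by identifying $H^1(C,f^*S^{\vee})\simeq H^1(\hat{C},p_*f^*S^{\vee})$ and observing that the cokernel of $\rho\colon p_*f^*S^{\vee}\to\hat{S}^{\vee}(\ldots)$ is a sheaf supported on finitely many points of $\hat{C}$, hence has vanishing $H^1$. Your final paragraph --- where you note that $\mathrm{cone}(\rho)$ is supported on a locus finite over the base, so that $R^{>0}t_*$ of it vanishes --- is exactly the right idea; but far from being ``not strictly needed for the statement,'' it \emph{is} the lemma. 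You should promote that remark from an afterthought to the core of the proof, and make explicit that the finiteness of the support forces $H^1(F)=0$, giving $F$ the amplitude $[-1,0]$ that Proposition~3.14 of \cite{eu2} requires. Without this, your proof establishes a true but vacuous statement.
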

\begin{proof}
Let us consider $$f:(C,x_1,...,x_{m})\to\G$$ a stable map, $p:C\to\widehat{C}$ the morphism contracting the rational tails and let $x_1,...,x_p$ be the gluing points of the rational tails $C^0_i$ with the rest of the curve.
Then we need to show that the morphism
\begin{equation*}
H^1(C, f^*S^{\vee})\to H^1(\hat{C}, \hat{S}^{\vee}(\sum d_ix_i))
\end{equation*}
is surjective. Since $$H^1(C, f^*S^{\vee})\simeq H^1(\hat{C}, p_*f^*S^{\vee})$$ we need to show that 
\begin{equation*}
H^1(\widehat{C}, p_*f^*S^{\vee})\to H^1(\hat{C},p_* f^*S^{\vee}(\sum d_iC^0_i))
\end{equation*}
is surjective. As the quotient of the morphism $q_*f^*S^{\vee}\to p_*f^*S^{\vee}(\sum d_iC^0_i)$ is supported on points, it has no higher cohomology. This shows that the above morphism is surjective.
\end{proof}

\begin{theorem}\label{final} Let $\gamma_1,...,\gamma_n\in A^*(\G(k,r)).$ Then we have that $$ev_1^*\gamma_1...ev_n^*\gamma_n\cdot[\overline{M}_{g,n}(\G(k,r),d)]^{\vv}=ev_1^*\gamma_1...ev_n^*\gamma_n\cdot[\overline{Q}_{g,n}(\G(k,r),d)]^{\vv}.$$ 
\end{theorem}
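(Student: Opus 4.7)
The plan is to establish two virtual pushforward identities
\begin{align*}
c_{1*}[\overline{MQ}_{g,n}(\G(k,r),d)]^{\vv}&=[\overline{M}_{g,n}(\G(k,r),d)]^{\vv},\\
c_{2*}[\overline{MQ}_{g,n}(\G(k,r),d)]^{\vv}&=[\overline{Q}_{g,n}(\G(k,r),d)]^{\vv},
\end{align*}
and then to combine them via the projection formula, using that the insertion $\prod_{i=1}^n ev_i^*\gamma_i$ pulls back to the same class on $\overline{MQ}$ under $c_1$ and $c_2$: the $n$ markings of a map-quotient lie away from the contracted rational tails, so on each marking the morphism $p_*\mathcal{S}^{\vee}\to\hat{\mathcal{S}}^{\vee}$ is an isomorphism and the two evaluation classes agree as morphisms $\overline{MQ}_{g,n}(\G(k,r),d)\to\G(k,r)$.

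For the first identity I use the left Cartesian square of Proposition \ref{summary}. By Construction \ref{relobs} the relative obstruction theory of $\overline{MQ}_{g,n}(\G(k,r),d)\to\widetilde{\mathfrak{Bun}}_{g,n}(k,d)$ is obtained by pullback under $c_1$ from the relative obstruction theory of $\overline{M}_{g,n}(\G(k,r),d)\to\mathfrak{Bun}_{g,n}(k,d)$. The morphism $\pi_1$ is proper and birational since by the proof of Lemma \ref{dimension} it is the identity over the open locus of smooth curves, and both source and target are pure-dimensional of the same dimension, so $\pi_{1*}[\widetilde{\mathfrak{Bun}}_{g,n}(k,d)]=[\mathfrak{Bun}_{g,n}(k,d)]$. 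The compatibility of virtual pullback with proper pushforward across a Cartesian square, proved in \cite{eu}, then yields $c_{1*}[\overline{MQ}]^{\vv}=[\overline{M}]^{\vv}$.

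For the second identity I factor $c_2$ as $\overline{MQ}_{g,n}(\G(k,r),d)\xrightarrow{q}\mathfrak{Q}_{g,n}(\G(k,r),d)\xrightarrow{s}\overline{Q}_{g,n}(\G(k,r),d)$. The morphism $s$ is the base change of $\pi_2$, which is again proper and birational by the same argument as for $\pi_1$, so the Cartesian-square compatibility gives $s_*[\mathfrak{Q}]^{\vv}=[\overline{Q}]^{\vv}$ with the obstruction theory of $\mathfrak{Q}$ pulled back from that of $\overline{Q}$. For $q$, both $\overline{MQ}$ and $\mathfrak{Q}$ carry relative perfect obstruction theories over $\widetilde{\mathfrak{Bun}}_{g,n}(k,d)$, namely $c_1^*R^{\bullet}{\pi_M}_*(\mathcal{S}_M^{\vee})^{\oplus r}$ and (by base change through the right Cartesian square) $c_2^*R^{\bullet}{\pi_Q}_*(\hat{\mathcal{S}}_Q^{\vee})^{\oplus r}$; Proposition \ref{morfism} supplies a morphism between them and Lemma \ref{perffibers} shows its cone is a perfect complex. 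A Riemann--Roch computation, using that the universal subbundles on $\overline{M}$ and $\overline{Q}$ have the same rank $k$ and the same degree $d$, shows this cone has rank zero, so the virtual dimensions match. Manolache's virtual push-forward theorem \cite{eu2} then gives $q_*[\overline{MQ}]^{\vv}=m\cdot[\mathfrak{Q}]^{\vv}$ for some integer $m$; restricting to the dense open locus of map-quotients whose domain curve has no rational tails, on which $q$ is an isomorphism and the two virtual classes coincide tautologically, pins down $m=1$.

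The main obstacle is precisely this last step: one must verify that the fiberwise surjectivity in Lemma \ref{perffibers} supplies exactly the hypothesis that \cite{eu2} demands on the cone of the comparison of obstruction theories, that the cone has vanishing rank, and that the multiplicity $m$ equals one. Once these are in place, the projection formula together with $c_1^*ev_i=c_2^*ev_i$ on $\overline{MQ}_{g,n}(\G(k,r),d)$ gives
\begin{align*}
\prod_{i=1}^n ev_i^*\gamma_i\cdot[\overline{M}]^{\vv}
&=c_{1*}\Bigl(\prod_{i=1}^n c_1^*ev_i^*\gamma_i\cdot[\overline{MQ}]^{\vv}\Bigr)\\
&=c_{2*}\Bigl(\prod_{i=1}^n c_2^*ev_i^*\gamma_i\cdot[\overline{MQ}]^{\vv}\Bigr)
=\prod_{i=1}^n ev_i^*\gamma_i\cdot[\overline{Q}]^{\vv},
\end{align*}
completing the proof.
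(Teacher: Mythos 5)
Your overall strategy matches the paper's exactly: prove the two push-forward identities $c_{1*}[\overline{MQ}]^{\vv}=[\overline{M}]^{\vv}$ and $c_{2*}[\overline{MQ}]^{\vv}=[\overline{Q}]^{\vv}$ and then combine them via the projection formula (the observation that the evaluation maps agree on $\overline{MQ}$ is correct and is left implicit in the paper). Your treatment of the first identity via the left Cartesian square is the same as the paper's, which invokes Costello's push-forward formula \cite{co} for that square.

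Where you diverge, and where the gap lies, is in the second identity. The paper does \emph{not} factor $c_2$ through $\mathfrak{Q}_{g,n}(\G(k,r),d)$; it applies the virtual push-forward theorem (Proposition 3.14 of \cite{eu2}) directly to the non-Cartesian square for $c_2$, using Proposition \ref{morfism}, Lemma \ref{perffibers}, and the connectedness of $\overline{Q}_{g,n}(\G(k,r),d)$. Your factorization $c_2=s\circ q$ creates two problems. First, to apply the virtual push-forward theorem to $q:\overline{MQ}\to\mathfrak{Q}$ you would need $\mathfrak{Q}$ to satisfy the hypotheses of that theorem, but $\mathfrak{Q}$ is an Artin stack (being a base change of $\widetilde{\mathfrak{Bun}}$, which is Artin), whereas the theorem in \cite{eu2} is formulated for proper morphisms of DM stacks with a common smooth base. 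Second, you assert that $\pi_2$ (and hence $s$) is proper and birational ``by the same argument as for $\pi_1$,'' but the argument you cite (from Lemma \ref{dimension}) only shows birationality over the locus of smooth curves; properness of $\pi_2:\widetilde{\mathfrak{Bun}}\to\mathfrak{Bun}^{rtf}$ is not established in the paper and is not an obvious consequence of equal pure dimension plus a dense open isomorphism. Neither of these issues arises in the paper because $c_2$ lands directly in the proper DM stack $\overline{Q}$.

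Finally, you omit a step the paper explicitly performs: the virtual push-forward theorem requires the common base to be smooth, and $\widetilde{\mathfrak{Bun}}_{g,n}(k,d)$ need not be. The paper handles this by passing to a desingularization $\mathfrak{B}\to\widetilde{\mathfrak{Bun}}_{g,n}(k,d)$, forming the Cartesian pull-back $M$ of $\overline{MQ}$, and applying Costello's formula to reduce to the case of a smooth base. Your proof as written never addresses this hypothesis, so even setting aside the factorization through $\mathfrak{Q}$, the application of \cite{eu2} is incomplete. Your remark that a Riemann--Roch computation shows the cone of the obstruction-theory comparison has rank zero, and that the multiplicity is pinned down to $m=1$ by restriction to the rational-tail-free locus, is a reasonable gloss on what Proposition 3.14 of \cite{eu2} delivers (the paper instead appeals to connectedness of $\overline{Q}$), but it does not repair the two gaps above.
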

\begin{proof} From Costello's push forward formula \cite{co} applied to the cartesian diagram 
\begin{equation*}
\xymatrix{\overline{MQ}_{g,n}(\G(k,r),d)\ar[r]^-{c_1}\ar[d]&\overline{M}_{g,n}(\G(k,r),d)\ar[d]\\
\widetilde{\mathfrak{Bun}}_{g,n}(k,d)\ar[r]&\mathfrak{Bun}_{g,n}(k,d)}
\end{equation*} 
we obtain that
\begin{equation}\label{leftd}
{c_1}_*[\overline{MQ}_{g,n}(\G(k,r),d)]^{\vv}=[\overline{M}_{g,n}(\G(k,r),d)]^{\vv},
\end{equation}
where $[\overline{MQ}_{g,n}(\G(k,r),d)]^{\vv}$ is the class from Construction \ref{relobs}. Let us now analyze the commutative (not cartesian) diagram
\begin{equation*}
\xymatrix{\overline{MQ}_{g,n}(\G(k,r),d)\ar[r]^-{c_2}\ar[d]&\overline{Q}_{g,n}(\G(k,r),d)\ar[d]\\
\widetilde{\mathfrak{Bun}}_{g,n}(k,d)\ar[r]&\mathfrak{Bun}_{g,n}(k,d).}
\end{equation*} 
Let $\mathfrak{B}\to \widetilde{\mathfrak{Bun}}_{g,n}(k,d)$ be a desingularization of $\widetilde{\mathfrak{Bun}}_{g,n}(k,d)$. Then we can construct a cartesian diagram
\begin{equation*}
\xymatrix{M\ar[r]^-s\ar[d]&\overline{MQ}_{g,n}(\G(k,r),d)\ar[d]\\
\mathfrak{B}\ar[r]&\widetilde{\mathfrak{Bun}}_{g,n}(k,d).}
\end{equation*} 
The relative obstruction theory of $\nu_M$ induces a virtual class on $M$ and by Costello's push-forward formula we have that $$s_*[M]^{\vv}=[\overline{MQ}_{g,n}(\G(k,r),d)]^{\vv}.$$ With this, we have shown that by replacing $\widetilde{\mathfrak{Bun}}_{g,n}(k,d)$ with $\mathfrak{B}$ we may assume that $\widetilde{\mathfrak{Bun}}_{g,n}(k,d)$ is smooth. As the moduli space of stable quotients is connected (\cite{kp, toda}) Proposition \ref{morfism}  and Lemma \ref{perffibers} show that we are under the hypothesis of Proposition 3.14 in \cite{eu2} which implies that
\begin{equation}\label{rightd}
{c_2}_*[\overline{MQ}_{g,n}(\G(k,r),d)]^{\vv}=[\overline{Q}_{g,n}(\G(k,r),d)]^{\vv}.
\end{equation}
The conclusion follows from equations (\ref{leftd}), (\ref{rightd}) and the projection formula. 
\end{proof}

\vspace{0.3cm}
Imperial College London, 180 Queen's Gate, SW7 2AZ London
\\{\it E-mail address:} \texttt{c.manolache$@$imperial.ac.uk}

\end{document}